\numberwithin{equation}{section}
\numberwithin{equation}{section}
\theoremstyle{plain}
\newtheorem{theorem}[equation]{Theorem}
\newtheorem{lemma}[equation]{Lemma}
\newtheorem{corollary}[equation]{Corollary}
\theoremstyle{definition}
\newtheorem{example}[equation]{Example}
\newtheorem{remark}[equation]{Remark}
\newtheorem{nonsec}[equation]{}
\theoremstyle{remark}
\newcommand{\R}{\mathbb{R}}
\newcommand{\C}{\mathbb{C}}
\newcommand{\B}{\mathbb{B}}
\newcommand{\M}{\mathsf{M}}
\newcommand{\K}{\mathcal{K}}
\newcommand{\uhp}{\mathbb{H}}
\font\fFt=eusm10 
\font\fFa=eusm7  
\font\fFp=eusm5  
\def\K{\mathchoice
{\hbox{\,\fFt K}}
{\hbox{\,\fFt K}}
{\hbox{\,\fFa K}}
{\hbox{\,\fFp K}}}
\newcounter{alphabet}
\newcounter{minutes}\setcounter{minutes}{\time}
\newcounter{hours}\setcounter{hours}{\time}
\begin{document}
\bibliographystyle{amsplain}
\title
{
Intrinsic metrics under conformal and quasiregular mappings
}

\def\thefootnote{}
\footnotetext{
\texttt{\tiny File:~\jobname .tex,
          printed: \number\year-\number\month-\number\day,
          \thehours.\ifnum\theminutes<10{0}\fi\theminutes}
}
\makeatletter\def\thefootnote{\@arabic\c@footnote}\makeatother

\author[O. Rainio]{Oona Rainio}
\address{Department of Mathematics and Statistics, University of Turku, FI-20014 Turku, Finland}
\email{ormrai@utu.fi}

\keywords{Conformal mappings, hyperbolic geometry, intrinsic metrics, M\"obius transformations, quasi-metrics, quasiconformal mappings, quasiregular mappings, Schwarz lemma, triangular ratio metric.}
\subjclass[2010]{Primary 51M10; Secondary 30C35}
\begin{abstract}
The distortion of six different intrinsic metrics and quasi-metrics under conformal and quasiregular mappings is studied in a few simple domains $G\subsetneq\mathbb{R}^n$. The already known inequalities between the hyperbolic metric and these intrinsic metrics for points $x,y$ in the unit ball $\mathbb{B}^n$ are improved by limiting the absolute values of the points $x,y$ and the new results are then used to study the conformal distortion of the intrinsic metrics. For the triangular ratio metric between two points $x,y\in\mathbb{B}^n$, the conformal distortion is bounded in terms of the hyperbolic midpoint and the hyperbolic distance of $x,y$. Furthermore, quasiregular and quasiconformal mappings are studied, and new sharp versions of the Schwarz lemma are introduced.
\end{abstract}
\maketitle

\textbf{Conflict of interest statement.} On behalf of all authors, the corresponding author states that there is no conflict of interest.

\textbf{Data availibility statement.} Data sharing not applicable – no new data generated.

\textbf{Funding.} The author has a PhD training position funded by the University of Turku Graduate School UTUGS.

{\bf Acknowledgements.} This research continues my earlier work in \cite{fss, inm, sinb, sqm}. Three last mentioned of these articles have been co-written with Professor Matti Vuorinen, to whom I am indebted for all guidance and other support. 

\section{Introduction}

Geometric function theory is an important field of study focusing on boundary structures of domains, several types of mappings and geometric properties of different functions. To study these concepts, one needs to find such a way to measure distances between points in a domain that illustrates the intricate features of the geometric entities so that they can be properly observed. One key concept here is an \emph{intrinsic distance}, which measures the distance between two points by taking into account how these points are located with respect to the boundary of the domain instead of just expressing how close the points are to each other.

A most well-known metric for measuring intrinsic distances is the \emph{hyperbolic metric} \cite{bm}, but it is not the only one. Instead, there are numerous distance functions with similar properties called the \emph{hyperbolic type metrics} or, more generally, the \emph{intrinsic metrics}. Recently, these metrics have been studied, for instance, in \cite{chkv, fhmv, fmv, hvz, imsz, fss, inm, sinb, sqm}.

Just like the hyperbolic metric, the intrinsic metrics are often monotonic with respect to the size of the domain, sensitive to the changes in its boundary and give increasing values for distances between two distinct points whenever the point closer to the boundary approaches it \cite[pp. 191-192]{hkvbook}. However, in spite of all these similarities, the intrinsic metrics do not generally share the most important property of the hyperbolic metric, the \emph{conformal invariance}. Namely, while the distances measured with the hyperbolic metric are always preserved under conformal mappings, the values of the intrinsic metrics generally change under these types of mappings. 

Thus, this leaves us with the question about how these metrics are exactly transformed under conformal mappings. Studying the \emph{distortion} of different metrics under conformal mappings and other well-known classes of mappings is a significant focus of study in this field. However, while the distortion of the hyperbolic metric under the quasiconformal and quasiregular mappings has been already studied, see for instance \cite[Thm 16.2, p. 300]{hkvbook}, there is not yet many results about how the newer intrinsic metrics and quasi-metrics behave under any of these mappings. 

Consequently, to fill this gap, the aim of this article is to find more about the distortion of six chosen metrics and quasi-metrics. While the focus is mostly their behaviour under conformal mappings and M\"obius transformations, also the quasiregular and quasiconformal mappings are introduced and studied. Furthermore, in order to make the found results understandable to the reader, all the definitions of different classes of mappings, the metrics considered and other related concepts are given and explained.

The structure of this paper is as follows. First, in Section \ref{s3}, we study the inequalities between the hyperbolic metric and several intrinsic metrics in the unit ball and give new sharp bounds for these metrics depending on the absolute values of the points whose intrinsic distances are being measured. Then, we use these inequalities to give bounds for the distortion under conformal mappings defined on the unit disk in Section \ref{s4}. We also consider one interesting M\"obius transformation in particular and use a so-called hyperbolic midpoint rotation to bound the conformal distortion of the triangular ratio metric. Our argument here is based on the idea of the hyperbolic midpoint rotation from \cite{sinb} and we use the very recent explicit formula for the hyperbolic midpoint from \cite[Thm 1.4, p. 3]{wvz}. Finally, in Section \ref{s5}, we apply the Schwarz lemma for quasiregular and quasiconformal mappings to prove several new H\"older continuity results for the metrics studied. These results, combined with the inequalities from the earlier sections, give also new inequalities for the distortion of quasiregular maps in the Euclidean metric.

\section{Preliminaries}

Suppose that $G$ is a proper domain in $\R^n$. In other words, choose an open, non-empty and connected set $G\subsetneq\R^n$. For any point $x\in G$, denote the Euclidean distance from it to the boundary $\partial G$ by $d_G(x)=\inf\{|x-z|\text{ }|\text{ }z\in\partial G\}$. Furthermore, for all $x\in\R^n$ and $r>0$, let $B^n(x,r)$ be the $x$-centered Euclidean open ball with the radius $r$, $\overline{B}^n(x,r)$ its closure and $S^{n-1}(x,r)$ its boundary sphere. Denote the Euclidean line passing through two distinct points $x,y\in\R^n$ by $L(x,y)$ and the Euclidean line segment with these end points by $[x,y]$.

In this paper, we often let the domain $G$ be either the upper half-space $\uhp^n=\{(x_1,...,x_n)\in\R^n\text{ }|\text{ }x_n>0\}$, the unit ball $\B^n=B^n(0,1)$ or the open sector $S_\theta=\{x\in\C\text{ }|\text{ }0<\arg(x)<\theta\}$ with an angle $\theta\in(0,2\pi)$. Define the \emph{hyperbolic metric} in these cases as 
\begin{align*}
\text{ch}\rho_{\uhp^n}(x,y)&=1+\frac{|x-y|^2}{2d_{\uhp^n}(x)d_{\uhp^n}(y)},\quad x,y\in\uhp^n,\\
\text{sh}^2\frac{\rho_{\B^n}(x,y)}{2}&=\frac{|x-y|^2}{(1-|x|^2)(1-|y|^2)},\quad x,y\in\B^n,\\
\rho_{S_\theta}(x,y)&=\rho_{\uhp^2}(x^{\pi\slash\theta},y^{\pi\slash\theta}),\quad x,y\in S_\theta.
\end{align*}
The first two formulas above are in \cite[(4.8), p. 52 \& (4.14), p. 55]{hkvbook} and the third one follows from the \emph{conformal invariance} of the hyperbolic metric: If $G$ is a domain and $f:G\to G'=f(G)$ is a conformal mapping, then
\begin{align*}
\rho_G(x,y)=\rho_{G'}(f(x),f(y))    
\end{align*}
for all points $x,y\in G$. Note also that if $n=2$, the formulas of the hyperbolic metric can be simplified to
\begin{align*}
\text{th}\frac{\rho_{\uhp^2}(x,y)}{2}=\left|\frac{x-y}{x-\overline{y}}\right|,\quad
\text{th}\frac{\rho_{\B^2}(x,y)}{2}=\left|\frac{x-y}{1-x\overline{y}}\right|=\frac{|x-y|}{A[x,y]},
\end{align*}
where $\overline{y}$ is the complex conjugate of $y$ and $A[x,y]=\sqrt{|x-y|^2+(1-|x|^2)(1-|y|^2)}$ is the Ahlfors bracket, see \cite[(3.17) p. 39]{hkvbook}.

Other than the hyperbolic metric, we will also need the following intrinsic metrics and quasi-metrics for a domain $G\subsetneq\R^n$:
The \emph{distance ratio metric}, introduced by Gehring and Palka \cite{gp}, $j_G:G\times G\to[0,\infty)$,
\begin{align*}
j_G(x,y)=\log\left(1+\frac{|x-y|}{\min\{d_G(x),d_G(y)\}}\right),   
\end{align*}
the \emph{$j^*$-metric} \cite[2.2, p. 1123 \& Lemma 2.1, p. 1124]{hvz} $j^*_G:G\times G\to[0,1],$
\begin{align*}
j^*_G(x,y)={\rm th}\frac{j_G(x,y)}{2}=\frac{|x-y|}{|x-y|+2\min\{d_G(x),d_G(y)\}},    
\end{align*}
the \emph{triangular ratio metric} \cite[(1.1), p. 683]{chkv}, originally introduced by P. H\"ast\"o in 2002 \cite{h}, $s_G:G\times G\to[0,1],$ 
\begin{align*}
s_G(x,y)=\frac{|x-y|}{\inf_{z\in\partial G}(|x-z|+|z-y|)}, 
\end{align*}
the \emph{point pair function} \cite[p. 685]{chkv}, \cite[2.4, p. 1124]{hvz}, $p_G:G\times G\to[0,1],$
\begin{align*}
p_G(x,y)=\frac{|x-y|}{\sqrt{|x-y|^2+4d_G(x)d_G(y)}},   
\end{align*}
the \emph{$w$-quasi-metric}, introduced in \cite{fss}, $w_G:G\times G\to[0,1]$,
\begin{align*}
&w_G(x,y)=\frac{|x-y|}{\min\{\inf_{\widetilde{y}\in\widetilde{Y}}|x-\widetilde{y}|,\inf_{\widetilde{x}\in\widetilde{X}}|y-\widetilde{x}|\}}\quad\text{with}\\
&\widetilde{X}=\{\widetilde{x}\in S^{n-1}(x,2d_G(x))\text{ }|\text{ }(x+\widetilde{x})\slash2\in\partial G\},
\end{align*} 
the \emph{$t$-metric}, introduced in \cite{inm}, $t_G:G\times G\to[0,1]$,
\begin{align*}
t_G(x,y)=\frac{|x-y|}{|x-y|+d_G(x)+d_G(y)},  
\end{align*}
and the \emph{Barrlund metric} \cite{fmv}, first studied in \cite{b99} and \cite{rcli}, $b_{G,p}:G\times G\to[0,\infty)$,
\begin{align*}
b_{G,p}(x,y)=\sup_{z\in\partial G}\frac{|x-y|}{(|x-z|^p+|z-y|^p)^{1\slash p}}\quad\text{for some}\quad p\geq1.
\end{align*}
Note that there are some domains in which the functions $p_G$ and $w_G$ are not metrics, as noted in \cite[Rmk 3.1 p. 689]{chkv} and \cite[Ex. 4.5, p. 9]{fss}. However, they are always at least quasi-metrics in a domain $G$ if they are defined: The function $w_G$ can only be defined in convex domains $G$ \cite[p. 8]{fss}. Furthermore, it follows from \cite[Thm 3.15 p. 11]{fmv} that, for all $x,y\in\B^n$, 
\begin{align*}
b_{\B^2,2}(x,y)=\frac{|x-y|}{\sqrt{2+|x|^2+|y|^2-2|x+y|}} 
\end{align*}
and, by \cite[Cor. 5.5, p. 12]{fss}, for all distinct points $x,y\in\B^n$ such that $0\leq|y|\leq|x|<1$ and $x\neq0$
\begin{align*}
w_{\B^n}(x,y)=\frac{|x-y|}{|y-\widetilde{x}|},
\end{align*}
where $\widetilde{x}=x(2-|x|)\slash|x|$.

Consider now a few known inequalities between these intrinsic metrics.

\begin{lemma}\label{lem_5ineq}
\cite[Lemma 2.1, p. 1124; Lemma 2.2, p. 1125 \& Lemma 2.3, p. 1125]{hvz}, \cite[Thm 3.6, p. 4]{sqm}, \cite[Thm 3.8, p. 7]{inm}, \cite[Thm 3.6 p. 7]{fmv}\newline
For a domain $G\subsetneq\R^n$ and for all $x,y\in G$, the following inequalities hold:\newline
$(1)$  $j^*_G(x,y)\leq s_G(x,y)\leq2j^*_G(x,y)$,\newline
$(2)$ $j^*_G(x,y)\leq p_G(x,y)\leq\sqrt{2}j^*_G(x,y)$,\newline
$(3)$  $(1\slash\sqrt{2})p_G(x,y)\leq s_G(x,y)\leq\sqrt{2}p_G(x,y)$,\newline
$(4)$  $(1\slash2)\max\{s_G(x,y),p_G(x,y)\}\leq t_G(x,y)\leq j^*_G(x,y)$,\newline
$(5)$  $s_G(x,y)\leq b_{G,p}(x,y)\leq2^{1-1\slash p}s_G(x,y).$
\end{lemma}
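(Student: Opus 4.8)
The statement to be proved is Lemma~\ref{lem_5ineq}, a collection of five known inequality chains between the intrinsic metrics $j^*_G$, $s_G$, $p_G$, $t_G$, and $b_{G,p}$. Since the lemma is explicitly attributed to the cited references, the ``proof'' is really a matter of assembling the pieces; the plan is to verify each of the five items either by a direct elementary manipulation of the defining formulas or by citing the stated source. I will treat the five items in the order given, and I expect the only genuinely delicate point to be the sharp-constant direction in items (1), (3), and (5), where one must exploit the triangle inequality in $\R^n$ efficiently.

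\medskip

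\textbf{Plan for (1) and (2).} Both follow from comparing the denominators in the defining fractions. For (1), note that $s_G(x,y)$ has the form $|x-y|/D$ with $D=\inf_{z\in\partial G}(|x-z|+|z-y|)$, while $j^*_G(x,y)=|x-y|/(|x-y|+2m)$ with $m=\min\{d_G(x),d_G(y)\}$. On one hand, for any $z\in\partial G$ we have $|x-z|+|z-y|\ge |x-y|$ and also $|x-z|+|z-y|\ge 2m$ (each of $|x-z|,|z-y|$ is at least $m$ by definition of $d_G$ once one orients the argument toward whichever point realizes the minimum, together with the triangle inequality to handle the other point); averaging or taking maxima of these two lower bounds against $|x-y|+2m$ gives $D\ge \tfrac12(|x-y|+2m)$, hence $s_G\le 2j^*_G$. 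On the other hand picking $z$ close to the nearer boundary point yields $D\le |x-y|+2m$, hence $j^*_G\le s_G$. Item (2) is the same kind of computation: $p_G(x,y)=|x-y|/\sqrt{|x-y|^2+4d_G(x)d_G(y)}$ and one compares $|x-y|^2+4d_G(x)d_G(y)$ with $(|x-y|+2m)^2=|x-y|^2+4m|x-y|+4m^2$; since $4d_G(x)d_G(y)\ge 4m^2$ and (using $|x-y|\le d_G(x)+d_G(y)$ is \emph{not} true in general, so instead) $4d_G(x)d_G(y)\le 2(d_G(x)^2+d_G(y)^2)\le \ldots$ one recovers the two-sided bound with constant $\sqrt2$; alternatively both (1) and (2) are simply quoted from \cite[Lemmas 2.1--2.3]{hvz}.

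\medskip

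\textbf{Plan for (3), (4), (5).} Item (3) is a formal consequence of (1) and (2): from $j^*_G\le p_G\le\sqrt2\,j^*_G$ and $j^*_G\le s_G\le 2 j^*_G$ one gets $s_G\le 2j^*_G\le 2p_G$, which is weaker than claimed, so instead one argues directly by comparing the denominators $\inf_z(|x-z|+|z-y|)$ and $\sqrt{|x-y|^2+4d_G(x)d_G(y)}$ using the inequality between the arithmetic form $a+b$ and the root form $\sqrt{a^2+b^2}$ (which differ by at most a factor $\sqrt2$), evaluated along the infimizing/optimizing boundary point; this is \cite[Thm 3.6]{sqm}. Item (4): the upper bound $t_G\le j^*_G$ compares $|x-y|+d_G(x)+d_G(y)$ with $|x-y|+2m$ and uses $d_G(x)+d_G(y)\ge 2m$; the lower bound $\tfrac12\max\{s_G,p_G\}\le t_G$ compares $|x-y|+d_G(x)+d_G(y)$ against $\inf_z(|x-z|+|z-y|)$ and against $\sqrt{|x-y|^2+4d_G(x)d_G(y)}$, in each case losing at most a factor $2$; this is \cite[Thm 3.8]{inm}. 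Item (5): $b_{G,p}(x,y)=\sup_z |x-y|/(|x-z|^p+|z-y|^p)^{1/p}$, and one uses the elementary norm comparison $(\!|a|^p+|b|^p)^{1/p}\le |a|+|b|\le 2^{1-1/p}(|a|^p+|b|^p)^{1/p}$ applied with $a=|x-z|$, $b=|z-y|$, together with the fact that $\sup_z$ and $\inf_z$ of the reciprocal expressions are attained (or approached) so that the constants transfer cleanly; this is \cite[Thm 3.6]{fmv}.

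\medskip

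\textbf{Main obstacle.} None of the five items is deep, but the one requiring the most care is keeping the sharp constants honest in (1), (3) and (5): in each case the ``easy'' direction is immediate from $(|x-z|+|z-y|)\ge|x-y|$ or from a crude norm bound, while the constant-$2$ (resp. $\sqrt2$, resp. $2^{1-1/p}$) direction needs the \emph{worst-case} configuration, which one checks by noting that the extremal boundary point can be taken on the segment $[x,y]$ (or its continuation) when the domain permits, and otherwise the triangle inequality alone forces the bound. Since all of this is recorded in the cited literature, the cleanest exposition is: give the short self-contained computations for the algebraically trivial bounds $j^*_G\le s_G$, $j^*_G\le p_G$, $t_G\le j^*_G$, and refer to \cite{hvz, sqm, inm, fmv} for the sharp opposite inequalities. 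This is what I would write.
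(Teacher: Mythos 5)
The paper gives no proof of this lemma at all: it is stated purely as a quotation of known results from \cite{hvz}, \cite{sqm}, \cite{inm} and \cite{fmv}, so your citation-based treatment is essentially the same approach, and your supplementary elementary sketches for (1), (4) and (5) are correct (for (1), $\inf_z(|x-z|+|z-y|)\ge\max\{|x-y|,2m\}\ge\tfrac12(|x-y|+2m)$ and the choice of $z$ as a nearest boundary point to the closer of $x,y$; for (5), the power-mean comparison $(a^p+b^p)^{1/p}\le a+b\le 2^{1-1/p}(a^p+b^p)^{1/p}$). The one spot where your sketch goes astray is item (2): the direction $j^*_G\le p_G$ amounts to $4d_G(x)d_G(y)\le 4m|x-y|+4m^2$, which follows from the $1$-Lipschitz property of $d_G$ (namely $\max\{d_G(x),d_G(y)\}\le m+|x-y|$), not from the inequality $4d_G(x)d_G(y)\le 2(d_G(x)^2+d_G(y)^2)$ that you begin to invoke and leave as a trailing ``$\le\ldots$''; the other direction of (2) is fine via $d_G(x)d_G(y)\ge m^2$ together with $(a+b)^2\le 2(a^2+b^2)$. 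Since the lemma is anyway attributed to the literature, either complete that step as indicated or simply quote \cite{hvz} as the paper does.
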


\begin{theorem}\cite[Cor. 4.9, p. 10]{fss}, \cite[Lemma 2.3, p. 1125]{hvz}
For any convex domain $G\subsetneq\R^n$ and all $x,y\in G$,
\begin{align*}
j^*_G(x,y)\leq w_G(x,y)\leq s_G(x,y)\leq p_G(x,y)\leq\sqrt{2}j^*_G(x,y).  \end{align*}
\end{theorem}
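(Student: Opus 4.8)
The plan is to prove the four inequalities in the chain separately, working from the right. The rightmost bound $p_G(x,y)\le\sqrt2\,j^*_G(x,y)$ is exactly the right-hand inequality in part $(2)$ of Lemma \ref{lem_5ineq}, so nothing is needed there. For the remaining three inequalities — the ones involving the $w$-quasi-metric, which is only defined because $G$ is convex — the common idea is to use that every boundary point of a convex domain carries a supporting hyperplane and that the maximal inscribed balls $B^n(x,d_G(x))$ and $B^n(y,d_G(y))$ lie in $G$. Fix $x,y\in G$ and write $d_1=d_G(x)$, $d_2=d_G(y)$, and $D=\inf_{z\in\partial G}(|x-z|+|z-y|)$, the denominator of $s_G(x,y)$.

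For $j^*_G(x,y)\le w_G(x,y)$, pick a nearest boundary point $m\in\partial G$ of $x$, so $|x-m|=d_1$, and put $\widetilde x=2m-x$; then $\widetilde x\in S^{n-1}(x,2d_1)$ with $(x+\widetilde x)/2=m\in\partial G$, so $\widetilde x$ is admissible in the definition of $w_G(x,y)$, and the triangle inequality gives
\[
|y-\widetilde x|\le|y-m|+|m-x|\le\bigl(|y-x|+|x-m|\bigr)+|m-x|=|x-y|+2d_1 .
\]
Interchanging $x$ and $y$ produces an admissible $\widetilde y$ with $|x-\widetilde y|\le|x-y|+2d_2$. Hence the denominator of $w_G(x,y)$ is at most $|x-y|+2\min\{d_1,d_2\}$, the denominator of $j^*_G(x,y)$, and the inequality follows; note that beyond the convexity needed to define $w_G$ no further convexity enters here.

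For $w_G(x,y)\le s_G(x,y)$ it is enough to show that the denominator of $w_G(x,y)$ is at least $D$. Let $\widetilde x$ be any admissible point, with midpoint $m=(x+\widetilde x)/2\in\partial G$ and $|x-m|=d_1$. Since $m$ realizes $d_G(x)$, the ball $B^n(x,d_1)$ is internally tangent to $\partial G$ at $m$, so by a standard property of convex sets the hyperplane $H$ through $m$ orthogonal to $[x,m]$ supports $G$; thus $G$ lies in the open half-space bounded by $H$ that contains $x$, and $\widetilde x$ is the reflection $\sigma_H(x)$, which lies in the complementary open half-space, so $\widetilde x\notin\overline G$. Therefore the segment $[y,\widetilde x]$ runs from $y\in G$ to $\widetilde x\notin\overline G$ and meets $\partial G$ at some point $z_0$; since $z_0\in\overline G$ it lies on the same side of $H$ as $x$, and a one-line computation with the decomposition of $\widetilde x-x$ along the normal of $H$ gives $|z_0-\widetilde x|\ge|z_0-x|$. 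Hence $|y-\widetilde x|=|y-z_0|+|z_0-\widetilde x|\ge|y-z_0|+|x-z_0|\ge D$. The same argument with the roles of $x$ and $y$ exchanged shows $|x-\widetilde y|\ge D$ for every admissible $\widetilde y$, so the denominator of $w_G(x,y)$ is $\ge D$, i.e. $w_G(x,y)\le s_G(x,y)$.

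For $s_G(x,y)\le p_G(x,y)$ it suffices to show $D\ge\sqrt{|x-y|^2+4d_1d_2}$. Fix $z\in\partial G$, let $H$ be a supporting hyperplane of $G$ at $z$ with outward unit normal $\nu$, and set $x'=\sigma_H(x)$. As $z\in H$ we have $|x-z|=|x'-z|$, so $|x-z|+|z-y|\ge|x'-y|$ by the triangle inequality. The inscribed balls $B^n(x,d_1)$, $B^n(y,d_2)$ lie in the closed half-space cut off by $H$, so $h_x:=\operatorname{dist}(x,H)\ge d_1$, $h_y:=\operatorname{dist}(y,H)\ge d_2$, and $x,y$ lie on the same side of $H$; writing $x'=x+2h_x\nu$ and using $\nu\cdot(x-y)=h_y-h_x$ one gets
\[
|x'-y|^2=|x-y|^2+4h_x\,\nu\cdot(x-y)+4h_x^2=|x-y|^2+4h_xh_y\ge|x-y|^2+4d_1d_2 .
\]
Thus $|x-z|+|z-y|\ge\sqrt{|x-y|^2+4d_1d_2}$ for every $z\in\partial G$, and taking the infimum gives $D\ge\sqrt{|x-y|^2+4d_1d_2}$, hence $s_G(x,y)\le p_G(x,y)$. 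The one step that requires genuine care is the middle inequality $w_G\le s_G$: one must check that the auxiliary point $\widetilde x$ in the definition of $w_G$ is really the reflection of $x$ in a supporting hyperplane at $m$ (this is where convexity and the fact that $m$ is a nearest boundary point enter), that $\widetilde x$ consequently lies outside $\overline G$ so that $[y,\widetilde x]$ truly crosses $\partial G$, and that reflecting across $H$ cannot decrease the distance from $x$ to the crossing point.
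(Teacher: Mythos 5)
Your proof is correct. Note that the paper itself offers no argument for this theorem: it is stated as a known result with citations to \cite{fss} (for the part of the chain involving $w_G$) and \cite{hvz} (for $s_G\leq p_G\leq\sqrt{2}j^*_G$), so there is no in-paper proof to compare against. Your reconstruction supplies exactly the standard supporting-hyperplane arguments that underlie those references: the bound $p_G\leq\sqrt{2}j^*_G$ is just Lemma \ref{lem_5ineq}(2); the inequality $j^*_G\leq w_G$ follows, as you show, from testing the infimum in the denominator of $w_G$ against the reflection of $x$ in its nearest boundary point; the middle inequality $w_G\leq s_G$ uses the (correct) observation that for a convex domain the supporting hyperplane at a nearest boundary point $m$ of $x$ is necessarily orthogonal to $[x,m]$, so the admissible $\widetilde{x}$ is the reflection $\sigma_H(x)$, lies outside $\overline{G}$, and the crossing point $z_0$ of $[y,\widetilde{x}]$ with $\partial G$ satisfies $|z_0-\widetilde{x}|\geq|z_0-x|$; and $s_G\leq p_G$ is the classical reflection computation $|\sigma_H(x)-y|^2=|x-y|^2+4h_xh_y\geq|x-y|^2+4d_G(x)d_G(y)$ at a supporting hyperplane of an arbitrary boundary point. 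All four steps are sound, the convexity hypothesis is used exactly where it must be (existence of supporting hyperplanes and the containment of the inscribed balls in the associated half-spaces), and the only inequality valid without convexity ($p_G\leq\sqrt{2}j^*_G$) is correctly identified as such.
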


\begin{lemma}\label{lem_jwsprho}\cite[Cor. 4.10, p. 10]{fss}, \cite[Thm 3.8(1), p. 7 \& Thm 3.11, p. 9]{inm}
For all $x,y\in G\in\{\uhp^n,\B^n\}$,
\begin{align*}
&(1)\quad{\rm th}\frac{\rho_{\uhp^n}(x,y)}{4}\leq j^*_{\uhp^n}(x,y)\leq w_{\uhp^n}(x,y)=s_{\uhp^n}(x,y)= p_{\uhp^n}(x,y)={\rm th}\frac{\rho_{\uhp^n}(x,y)}{2},\\
&(2)\quad{\rm th}\frac{\rho_{\B^n}(x,y)}{4}\leq j^*_{\B^n}(x,y)\leq w_{\B^n}(x,y)\leq s_{\B^n}(x,y)\leq p_{\B^n}(x,y)\leq{\rm th}\frac{\rho_{\B^n}(x,y)}{2},\\
&(3)\quad\frac{1}{2}{\rm th}\frac{\rho_G(x,y)}{2}\leq t_G(x,y)\leq j^*_G(x,y)\leq{\rm th}\frac{\rho_G(x,y)}{2}.
\end{align*}
\end{lemma}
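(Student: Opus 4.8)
The plan is to reduce every claim to an explicit computation in $\uhp^n$ and $\B^n$, using Lemma \ref{lem_5ineq} and the convex--domain theorem stated just above. I would begin by recording $d_{\uhp^n}(x)=x_n$ and $d_{\B^n}(x)=1-|x|$, so that
\[
p_{\uhp^n}(x,y)=\frac{|x-y|}{\sqrt{|x-y|^2+4x_ny_n}},\qquad p_{\B^n}(x,y)=\frac{|x-y|}{\sqrt{|x-y|^2+4(1-|x|)(1-|y|)}}.
\]
Applying ${\rm th}^2(\rho/2)=({\rm ch}\,\rho-1)/({\rm ch}\,\rho+1)$ to the first hyperbolic formula of Section~2 gives ${\rm th}(\rho_{\uhp^n}(x,y)/2)=p_{\uhp^n}(x,y)$, and ${\rm th}^2(\rho/2)={\rm sh}^2(\rho/2)/(1+{\rm sh}^2(\rho/2))$ applied to the second gives ${\rm th}(\rho_{\B^n}(x,y)/2)=|x-y|/\sqrt{|x-y|^2+(1-|x|^2)(1-|y|^2)}$. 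A reflection argument ($|x-z|+|z-y|=|x-z|+|z-\overline y|\ge|x-\overline y|$ with equality on $[x,\overline y]$, $\overline y$ the mirror image of $y$ in $\partial\uhp^n$) shows $s_{\uhp^n}=p_{\uhp^n}$, and the midpoint condition $(x+\widetilde x)/2\in\partial\uhp^n$ together with $|\widetilde x-x|=2d_{\uhp^n}(x)$ forces $\widetilde x=\overline x$, so the set $\widetilde X$ is the singleton $\{\overline x\}$ and $w_{\uhp^n}=p_{\uhp^n}$ as well; this settles the equalities in (1). The chain $j^*_{\B^n}\le w_{\B^n}\le s_{\B^n}\le p_{\B^n}$ in (2) is exactly the convex--domain theorem applied to $\B^n$, and $j^*_{\uhp^n}\le w_{\uhp^n}$ in (1) is its first inequality. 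Finally $p_{\B^n}(x,y)\le{\rm th}(\rho_{\B^n}(x,y)/2)$ is equivalent to $(1-|x|^2)(1-|y|^2)\le4(1-|x|)(1-|y|)$, i.e.\ to $(1+|x|)(1+|y|)\le4$, which is clear.

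For the lower bound ${\rm th}(\rho_G(x,y)/4)\le j^*_G(x,y)$ in (1) and (2): since ${\rm th}$ is increasing and $j^*_G={\rm th}(j_G/2)$, this is literally $\rho_G(x,y)\le2j_G(x,y)$, i.e.\ ${\rm ch}\,\rho_G\le{\rm ch}(2j_G)$. Assuming, as we may, $d_G(x)\le d_G(y)$ and writing $u:=1+|x-y|/d_G(x)\ge1$, one has ${\rm ch}(2j_G)=(u^2+u^{-2})/2$. In $\uhp^n$ (so $x_n\le y_n$) the hyperbolic formula yields ${\rm ch}\,\rho_{\uhp^n}\le1+(u-1)^2/2$, and the desired inequality collapses to the identity $2u^3-3u^2+1=(u-1)^2(2u+1)\ge0$. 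In $\B^n$ (so $|x|\ge|y|$) one first gets ${\rm ch}\,\rho_{\B^n}\le1+2(u-1)^2/(1+|x|)^2$; then $|x-y|\le|x|+|y|\le2|x|$ forces $1+|x|\ge 2u/(u+1)$, so ${\rm ch}\,\rho_{\B^n}\le 1+(u^2-1)^2/(2u^2)=(u^2+u^{-2})/2$, again by a direct identity.

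Part (3) is then assembled: $t_G(x,y)\le j^*_G(x,y)$ is Lemma \ref{lem_5ineq}(4), and $j^*_G(x,y)\le{\rm th}(\rho_G(x,y)/2)$ follows from Lemma \ref{lem_5ineq}(2) together with the already established $p_G\le{\rm th}(\rho_G/2)$. For $\tfrac12{\rm th}(\rho_G(x,y)/2)\le t_G(x,y)$: in $\uhp^n$ it is immediate, since Lemma \ref{lem_5ineq}(4) gives $\tfrac12 p_{\uhp^n}\le t_{\uhp^n}$ and $p_{\uhp^n}={\rm th}(\rho_{\uhp^n}/2)$. In $\B^n$, writing $a=|x|\ge b=|y|$ and $m=|x-y|\in[a-b,a+b]$, the claim becomes, after dividing by $m$ and squaring, $q(m):=4\bigl(m^2+(1-a^2)(1-b^2)\bigr)-(m+2-a-b)^2\ge0$; here $q$ is an upward parabola in $m$ with vertex at $(2-a-b)/3$, and at the endpoints $q(a+b)=4ab(2+ab)$ and $q(a-b)=4b(1-a)\bigl(2-b(1+a)\bigr)$, both visibly nonnegative. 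Thus $q\ge0$ on the interval unless the vertex lies in its interior, in which case one needs $3(1-a^2)(1-b^2)\ge(2-a-b)^2$; but ``vertex interior'' forces $a+b>\tfrac12$ and $2a-b<1$, and under these two constraints, substituting $1-b=s(1-a)$ with $s\in[1,2]$ and minimising in the remaining variable, the inequality reduces to $(s-2)(4s^3+12s^2-3s-2)\le0$, which holds on $[1,2]$.

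The main obstacle is precisely this last point. The $\B^n$ lower bound for $t$ is \emph{not} a formal consequence of Lemma \ref{lem_5ineq} and parts (1)--(2), because the comparison $p_{\B^n}\le{\rm th}(\rho_{\B^n}/2)$ runs in the wrong direction; so one genuinely needs the quadratic analysis above, whose delicate step is the vertex--interior case, where the geometric restriction $|x-y|\le|x|+|y|$ must be used to cut the parameter range down to a region on which the sharpened inequality $3(1-|x|^2)(1-|y|^2)\ge(2-|x|-|y|)^2$ is true. The two inequalities $\rho_G\le2j_G$ are the other mildly substantial points, but — at least for $\B^n$ — one again leans on $|x-y|\le|x|+|y|$, after which the algebra collapses to the perfect--square identities noted above.
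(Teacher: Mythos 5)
Your proof is correct, but note that the paper does not actually prove this lemma: it is quoted verbatim from \cite[Cor.\ 4.10]{fss} and \cite[Thm 3.8(1), Thm 3.11]{inm}, so the intended ``proof'' is a citation, and your self-contained argument is necessarily a different route. What you supply is essentially a reconstruction of those cited results from first principles, and I have checked the computational pivots: ${\rm th}(\rho_{\uhp^n}/2)=p_{\uhp^n}$ and ${\rm th}(\rho_{\B^n}/2)=|x-y|/A[x,y]$ follow as you say from the ${\rm ch}$ and ${\rm sh}^2$ formulas of Section 2; the reflection/midpoint arguments correctly identify $\widetilde X=\{\overline x\}$ and give $w_{\uhp^n}=s_{\uhp^n}=p_{\uhp^n}$; the bound $\rho_G\le 2j_G$ reduces exactly to $(u-1)^2(2u+1)\ge0$ in $\uhp^n$ and to the identity $1+(u^2-1)^2/(2u^2)=(u^2+u^{-2})/2$ in $\B^n$ after the step $1+|x|\ge 2u/(u+1)$, which is where the triangle inequality genuinely enters. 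Your diagnosis of the one non-formal point is also right: $\tfrac12{\rm th}(\rho_{\B^n}/2)\le t_{\B^n}$ cannot be pieced together from Lemma \ref{lem_5ineq} because $p_{\B^n}\le{\rm th}(\rho_{\B^n}/2)$ points the wrong way, and your quadratic $q(m)=3m^2-2(2-a-b)m+4(1-a^2)(1-b^2)-(2-a-b)^2$ has the endpoint values $4ab(2+ab)$ and $4b(1-a)(2-b(1+a))$ and vertex condition $3(1-a^2)(1-b^2)\ge(2-a-b)^2$ exactly as claimed; I verified that after setting $1-b=s(1-a)$ and pushing $1-a$ to its supremum $3/(2(1+s))$ the condition becomes $3s(4s+1)(s+4)\ge4(1+s)^4$, i.e.\ $(s-2)(4s^3+12s^2-3s-2)\le0$ on $[1,2]$, with equality only at $s=2$ (the corner $a=1/2$, $b=0$), so the constant $1/2$ is attained in the limit and your constraint region is exactly what makes the argument close. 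The trade-off is the obvious one: the citation is shorter and defers sharpness discussions to the source papers, while your argument is longer but makes the lemma independent of \cite{fss} and \cite{inm}; if you wanted to include it, the only presentational gap is that the final polynomial verification is asserted rather than displayed, and a referee would want the factorization $-4s^4-4s^3+27s^2-4s-4=-(s-2)(4s^3+12s^2-3s-2)$ written out.
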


\begin{lemma}\cite[Lemma 3.5, p. 5 \& Thm 3.7 p. 6]{sinb}
For all $x,y\in\B^n$,
\begin{align*}
\max\left\{s_{\B^n}(x,y),\frac{4}{\sqrt{10}+\sqrt{2}}p_{\B^n}(x,y)\right\}
\leq b_{\B^2,2}(x,y)
\leq\sqrt{2}s_{\B^n}(x,y).
\end{align*}
\end{lemma}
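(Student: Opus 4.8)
The first two inequalities in the chain are nothing but Lemma \ref{lem_5ineq}(5) with $p=2$, since $2^{1-1/2}=\sqrt2$ (and $b_{\B^2,2}$ agrees with $b_{\B^n,2}$ because the extremal boundary point for a given pair may be taken in the plane through $x$, $y$ and the origin). So the only part that needs work is
\[
\frac{4}{\sqrt{10}+\sqrt2}\,p_{\B^n}(x,y)\le b_{\B^2,2}(x,y),
\]
which I would prove directly from the explicit formulas. Both sides are invariant under orthogonal maps fixing $0$ and both vanish when $x=y$, so assume $x\ne y$ and put $r=|x|$, $s=|y|$, $w=|x+y|\in[\,|r-s|,\,r+s\,]$; then $|x-y|^2=2(r^2+s^2)-w^2$ and $d_{\B^n}(x)d_{\B^n}(y)=(1-r)(1-s)$. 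Using $b_{\B^2,2}(x,y)^2=|x-y|^2/(2+r^2+s^2-2w)$ and $p_{\B^n}(x,y)^2=|x-y|^2/\bigl(|x-y|^2+4(1-r)(1-s)\bigr)$, cancelling $|x-y|^2$, and noting $\bigl(4/(\sqrt{10}+\sqrt2)\bigr)^2=4/(3+\sqrt5)=3-\sqrt5$, the inequality becomes equivalent to
\[
D(w):=2(r^2+s^2)-w^2+4(1-r)(1-s)-(3-\sqrt5)\bigl(2+r^2+s^2-2w\bigr)\ge0,
\]
the two denominators being positive because $2+r^2+s^2-2w\ge(1-r)^2+(1-s)^2>0$ on $[0,1)^2$.

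The reason for parametrising by $w=|x+y|$ is that $D(w)=-w^2+2(3-\sqrt5)w+C(r,s)$ is concave in $w$, so its minimum over $w\in[\,|r-s|,\,r+s\,]$ is attained at an endpoint. Hence it suffices to treat the two configurations $w=|r-s|$ (points $x,y$ on opposite rays from $0$) and $w=r+s$ ($x,y$ on the same ray). In each of these $D$ is a quadratic in $(r,s)$ whose quadratic part, $(\sqrt5-2)(r^2+s^2)+6rs$ respectively $(\sqrt5-2)(r^2+s^2)+2rs$, has indefinite Hessian (eigenvalues of opposite sign), so $D$ has no interior minimum on the triangle $\{0\le s\le r\le 1\}$, and it remains only to check $D\ge0$ on its three edges $s=0$, $r=1$, $r=s$. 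Each such restriction is a one-variable quadratic, and I expect them to collapse to perfect squares or manifestly nonnegative expressions: for instance, in the opposite-ray case $D|_{r=1}=(\sqrt5-2)(1+s)^2$ and $D|_{r=s}=2(\sqrt5+1)\bigl(r-\tfrac{\sqrt5-1}{2}\bigr)^2$, while on $s=0$ the quadratic $(\sqrt5-2)r^2+(2-2\sqrt5)r+(2\sqrt5-2)$ is decreasing on $[0,1]$ with value $\sqrt5-2>0$ at $r=1$; the same-ray case is analogous (there $D|_{r=1}=(\sqrt5-2)(1-s)^2$, $D|_{r=s}=2(\sqrt5-1)(1-r)^2$, and $D|_{s=0}$ is the same quadratic as above). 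Combining the cases yields $D(w)\ge0$, hence the claim.

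The only real obstacle is the two-parameter, tight case $w=|r-s|$: one has to carry the surds through the edge computations carefully enough to see that the constant $3-\sqrt5$ — equivalently $4/(\sqrt{10}+\sqrt2)$ — is precisely the value at which the edge discriminants vanish, so that $D\ge0$ holds with equality, attained at $r=s=\tfrac{\sqrt5-1}{2}$. This simultaneously identifies the extremal pair as $x=-y$ with $|x|=|y|=\tfrac{\sqrt5-1}{2}$ and shows the constant is sharp; a convenient consistency check along the way is that $b_{\B^2,2}(x,-x)=\sqrt2\,s_{\B^n}(x,-x)$ for every $x$, so the upper end of the chain is attained on the same family.
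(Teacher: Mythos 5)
Your argument is correct, but it is a genuinely different route from the paper's: the paper does not prove this lemma at all, it simply imports it from \cite{sinb} (Lemma 3.5 and Theorem 3.7 there), whereas you give a self-contained elementary verification. Your reduction of the only nontrivial part, $\tfrac{4}{\sqrt{10}+\sqrt{2}}p_{\B^n}(x,y)\le b_{\B^2,2}(x,y)$, to the nonnegativity of $D(w)=2(r^2+s^2)-w^2+4(1-r)(1-s)-(3-\sqrt5)(2+r^2+s^2-2w)$ is sound (the whole inequality depends only on $|x|,|y|,|x+y|$, so the planar reduction and the parametrisation by $w$ are legitimate), the concavity-in-$w$ and indefinite-Hessian arguments correctly push the minimum to the edges of the triangle $\{0\le s\le r\le 1\}$, and I have checked that all of your edge factorisations are exact: in particular $D|_{r=s}=2(\sqrt5+1)\bigl(r-\tfrac{\sqrt5-1}{2}\bigr)^2$ in the opposite-ray case, which vanishes at $r=s=\tfrac{\sqrt5-1}{2}$ and confirms that $\bigl(4/(\sqrt{10}+\sqrt2)\bigr)^2=3-\sqrt5$ is the sharp constant, attained at $x=-y$ with $|x|=\tfrac{\sqrt5-1}{2}$. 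What your approach buys is transparency and an explicit extremal configuration; what the paper's citation buys is brevity and consistency with \cite{sinb}, where the comparison of $p$ and $b$ is derived within a broader framework. One side remark of yours is false as stated: $b_{\B^2,2}(x,-x)=\sqrt2\,|x|/\sqrt{1+|x|^2}$ while $s_{\B^n}(x,-x)=|x|$, so $b_{\B^2,2}(x,-x)=\sqrt2\,s_{\B^n}(x,-x)$ holds only in the limit $|x|\to0$, not for every $x$; since this was offered only as a consistency check and the upper bound $b\le\sqrt2 s$ already follows from Lemma \ref{lem_5ineq}(5) with $p=2$, it does not affect the proof, but you should replace it with the limiting statement.
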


\begin{remark}\label{rmk_th}
For every $t\geq0$,
\begin{align*}
{\rm th}(t)=\frac{2{\rm th}(t\slash2)}{1+{\rm th}^2(t\slash2)}\leq2{\rm th}(t\slash2).
\end{align*}
\end{remark}

\section{Inequalities for metrics}\label{s3}

In this section, we will find inequalities between different intrinsic metrics and the hyperbolic metric defined in the unit ball $\B^n$ that work for all points $x,y\in\B^n$ whose absolute values are on a certain interval $[r_l,r_u]\subset[0,1)$. In other words, we improve the bounds of Lemma \ref{lem_jwsprho}(2)-(3) by taking the absolute values of the points into account. Furthermore, in Corollary \ref{cor_brhoB} and Lemma \ref{lem_brhoH}, we will also introduce such bounds for the Barrlund metric in terms of the hyperbolic metric that work for all points in the unit disk and in the upper half-space, respectively. 

\begin{remark}\label{rmk_nwlg}
When studying the inequalities between the hyperbolic metric and the intrinsic metrics introduced in this paper in the unit ball or in the upper half-space, we can always suppose that $n=2$ without loss of generality. Namely, the distances between the points $x,y\in\B^n$ defined by these metrics only depend on how $x,y$ are located on the intersection of the domain $\B^n$ and the two-dimensional plane containing $x$, $y$ and the origin. In $\uhp^n$, the distances are determined by the location of $x$ and $y$ on the intersection of $\uhp^n$ and the two-dimensional plane that contains $x,y$ and is perpendicular to $\partial\uhp^n$.
\end{remark}

First, we find the bounds for the $t$-metric, the $j^*$-metric and the point pair function.

\begin{theorem}\label{thm_rtrho}
For all $x,y\in\B^n$  such that $0\leq|x|\leq|y|\leq r_u<1$,
\begin{align*}
\frac{1}{2}{\rm th}\frac{\rho_{\B^n}(x,y)}{2}
\leq t_{\B^n}(x,y)
\leq\frac{1+r_u}{2}{\rm th}\frac{\rho_{\B^n}(x,y)}{2} 
\end{align*}
and the latter part of this inequality has the best possible constant depending only on $r_u$.
\end{theorem}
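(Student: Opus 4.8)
The plan is to reduce everything to a one-variable computation using Remark \ref{rmk_nwlg}, so I may assume $n=2$ and that $x,y$ lie on the intersection of $\B^2$ with a plane through the origin; by a rotation I can take this plane to be $\R^2$ itself. The lower bound $\tfrac12\,\mathrm{th}\tfrac{\rho_{\B^n}(x,y)}{2}\le t_{\B^n}(x,y)$ is already contained in Lemma \ref{lem_jwsprho}(3), so the only real work is the upper bound and its sharpness. The key idea is to write out both sides explicitly. Using $d_{\B^n}(z)=1-|z|$ one has
\begin{align*}
t_{\B^n}(x,y)=\frac{|x-y|}{|x-y|+2-|x|-|y|},
\end{align*}
while $\mathrm{th}\tfrac{\rho_{\B^n}(x,y)}{2}=|x-y|/A[x,y]$ with $A[x,y]=\sqrt{|x-y|^2+(1-|x|^2)(1-|y|^2)}$ by the $n=2$ formula in the Preliminaries. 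So the desired inequality is equivalent to
\begin{align*}
\frac{1}{|x-y|+2-|x|-|y|}\le\frac{1+r_u}{2}\cdot\frac{1}{A[x,y]},
\end{align*}
i.e. to $2A[x,y]\le(1+r_u)\bigl(|x-y|+2-|x|-|y|\bigr)$.

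Next I would try to prove this last inequality. Since $|x-y|$ appears on both sides, it is natural to set $a=|x|$, $b=|y|$ with $0\le a\le b\le r_u$ and treat $t=|x-y|$ as a free parameter ranging over $[b-a,\,a+b]$ (the geometrically admissible interval for two points at those radii). Squaring, the claim becomes
\begin{align*}
4\bigl(t^2+(1-a^2)(1-b^2)\bigr)\le(1+r_u)^2\bigl(t+2-a-b\bigr)^2.
\end{align*}
Viewing the difference (right minus left) as a quadratic in $t$ with positive leading coefficient $(1+r_u)^2-4$ when $r_u>1$... — wait, $(1+r_u)^2-4<0$ for $r_u<1$, so the quadratic opens downward, and one must check its value at the two endpoints $t=b-a$ and $t=a+b$ of the admissible interval; concavity then gives the inequality on the whole interval. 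At $t=a+b$ (the collinear case with the origin between a suitable arrangement) and at $t=b-a$ the expression should factor pleasantly in terms of $(1-a)$, $(1-b)$, $r_u-a$, $r_u-b$, and I expect each endpoint evaluation to reduce to a manifestly nonnegative product. I would also double-check the degenerate boundary case $a=b=0$, where the inequality reads $4t^2+4\le(1+r_u)^2(t+2)^2$ for $t\in[0,0]$, i.e. $t=0$, trivially true; and more usefully $a=0$, $b=r_u$, $t=r_u$, which should give equality and thereby the sharpness.

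For the sharpness statement, I would exhibit the extremal configuration extracted from the endpoint analysis — I expect it to be $x=0$ and $y\to r_u$ along a radius, or equivalently $x,y$ collinear with $0$ and $|y|=r_u$, $|x|\to r_u$ — and verify directly that along this family the ratio $t_{\B^n}(x,y)\big/\mathrm{th}\tfrac{\rho_{\B^n}(x,y)}{2}$ tends to $(1+r_u)/2$, so no smaller constant can work. The main obstacle I anticipate is purely algebraic: organizing the quadratic-in-$t$ estimate so that the endpoint evaluations visibly factor, rather than producing an opaque polynomial inequality. A clean way to manage this is to substitute $1-a^2=(1-a)(1+a)$ and group the constant term of the quadratic against $2-a-b=(1-a)+(1-b)$, which should let the whole argument collapse to comparing products of the four nonnegative quantities $1-a,\,1-b,\,r_u-a,\,r_u-b$ (together with $a,b\ge0$). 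Once the inequality $2A[x,y]\le(1+r_u)(|x-y|+2-|x|-|y|)$ is established with equality characterized, both the bound and its optimality follow immediately.
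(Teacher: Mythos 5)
Your reduction to the inequality $2A[x,y]\le(1+r_u)\bigl(|x-y|+2-|x|-|y|\bigr)$ is the right reformulation, and your route to it is genuinely different from the paper's. The paper keeps the quotient $\sqrt{v^2+(1-|x|^2)(1-|y|^2)}\slash(v+2-|x|-|y|)$ unsquared, computes its derivative in $v=|x-y|$ to locate the maximum at an endpoint of $[0,|x|+|y|]$, and then separately optimizes the two endpoint values over $|x|\le|y|\le r_u$. You instead square and observe that
\begin{align*}
q(t)=(1+r_u)^2(t+2-a-b)^2-4\bigl(t^2+(1-a^2)(1-b^2)\bigr)
\end{align*}
is concave in $t$ (leading coefficient $(1+r_u)^2-4<0$), so nonnegativity on $[b-a,a+b]$ follows from two endpoint checks; these do factor as you predicted, since $t^2+(1-a^2)(1-b^2)$ equals $(1+ab)^2$ at $t=a+b$ and $(1-ab)^2$ at $t=b-a$, reducing the endpoints to $1+ab\le1+r_u$ and $1-ab\le(1+r_u)(1-a)$, i.e.\ $a(1+r_u-b)\le r_u$, both immediate from $0\le a\le b\le r_u$. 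Squaring is legitimate since both sides are positive. This is a clean, fully elementary argument, arguably tidier than the paper's.

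The one genuine error is in the sharpness discussion. The configuration $a=0$, $b=r_u$, $t=r_u$ does \emph{not} give equality: there the left side is $4(r_u^2+1-r_u^2)=4$ while the right side is $4(1+r_u)^2$, and indeed $t_{\B^2}(0,y)\slash{\rm th}(\rho_{\B^2}(0,y)\slash2)=1\slash2$, which is the lower constant, not $(1+r_u)\slash2$. Your two proposed extremal families are also not equivalent; the correct one is the second, read as $x\to y$ radially with $|x|=|y|=r_u$, i.e.\ $t=b-a\to0$. Equality at the endpoint $t=b-a$ requires $1-ab=(1+r_u)(1-a)$, which together with $a=b$ forces $a=b=r_u$, and then the quotient equals $(1-r_u^2)\slash(2(1-r_u))=(1+r_u)\slash2$. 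This is exactly the paper's extremal case $x\to y^-$ with $|y|=r_u$. With that correction your argument is complete.
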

\begin{proof}
By Remark \ref{rmk_nwlg}, we can suppose that $n=2$ without loss of generality. Now,
\begin{align}\label{quo_rtrho}
\frac{t_{\B^2}(x,y)}{{\rm th}(\rho_{\B^2}(x,y)\slash2)}
=\frac{\sqrt{|x-y|^2+(1-|x|^2)(1-|y|^2)}}{|x-y|+2-|x|-|y|}.
\end{align}
Denote $v=|x-y|^2$. By differentiation,
\begin{align*}
&\frac{\partial}{\partial v}\left(\frac{\sqrt{v^2+(1-|x|^2)(1-|y|^2)}}{v+2-|x|-|y|}\right) =\frac{(2-|x|-|y|)v-(1-|x|^2)(1-|y|^2)}{(v+2-|x|-|y|)^2\sqrt{v^2+(1-|x|^2)(1-|y|^2)}}\geq0\\
&\Leftrightarrow\quad
v\geq\frac{(1-|x|^2)(1-|y|^2)}{2-|x|-|y|}
\end{align*}
and, consequently, the quotient \eqref{quo_rtrho} obtains its maximum when $v$ is either at its lowest or at its greatest. By the triangle inequality, $0\leq v\leq|x|+|y|$ and the values of the quotient \eqref{quo_rtrho} of these endpoints are
\begin{align}
&\frac{\sqrt{(1-|x|^2)(1-|y|^2)}}{2-|x|-|y|},\quad\text{and}\label{ubrt0}\\
&\frac{\sqrt{(|x|+|y|)^2+(1-|x|^2)(1-|y|^2)}}{|x|+|y|+2-|x|-|y|}
=\frac{\sqrt{1+2|x||y|+|x|^2|y|^2}}{2}
=\frac{1+|x||y|}{2}.\label{ubrt1}
\end{align}
Since
\begin{align*}
&\frac{\partial}{\partial|x|}\left(\frac{\sqrt{(1-|x|^2)(1-|y|^2)}}{2-|x|-|y|}\right)
=\frac{(1-2|x|+|x||y|)}{(2-|x|-|y|)^2}\sqrt{\frac{1-|y|^2}{1-|x|^2}}\\
&\geq\left(\frac{1-|x|}{2-|x|-|y|}\right)^2\sqrt{\frac{1-|y|^2}{1-|x|^2}}
>0
\end{align*}
the quotient \eqref{ubrt0} is increasing with respect to $|x|$. Because $|x|\leq|y|$, we will have
\begin{align*}
\frac{\sqrt{(1-|x|^2)(1-|y|^2)}}{2-|x|-|y|}
\leq\frac{(1-|y|^2)}{2(1-|y|)}=\frac{1+|y|}{2},
\end{align*}
which clearly greater than the value of the quotient \ref{ubrt1} and increasing with respect to $|y|\leq r_u$. The latter part of the inequality in the theorem follows from this and its sharpness can be seen by considering points $x\to y^-$ and $y=r_u$. The first part of the inequality follows directly from Lemma \ref{lem_jwsprho}(3).
\end{proof}

\begin{figure}[ht]
    \centering
    \begin{tikzpicture}[scale=7]
    \draw[thick] (1,0) arc (0:90:1);
    \draw[thick] (0.565,0) arc (0:90:0.565);
    \draw[thick] (0.4,0.4) circle (0.3cm);
    \draw[thick] (0.4,0.4) circle (0.01cm);
    \draw[thick] (0,0) circle (0.01cm);
    \draw[thick] (0,0) -- (0.4,0.4);
    \draw[thick] (0.105,0.46) circle (0.01cm);
    \draw[thick] (0,0) -- (0.105,0.46);
    \draw[thick] (0.105,0.46) -- (0.4,0.4);
    \draw[thick] (0.33,0.415) arc (160:221:0.07);
    \node[scale=1.3] at (0.07,0.49) {$x$};
    \node[scale=1.3] at (0.4,0.46) {$y$};
    \node[scale=1.3] at (0.29,0.37) {$k$};
    \node[scale=1.3] at (0.21,0.47) {$v$};
    \node[scale=1.3] at (-0.03,0.07) {$0$};
    \end{tikzpicture}
    \caption{Angle $k=\arg(y\slash(y-x))$, when $y\in\B^2$ and $x\in S^1(y,v)\cap\overline{B}^2(0,|y|)$, as in the proof of Theorem \ref{thm_rjrho}.}
    \label{schfig1}
\end{figure}
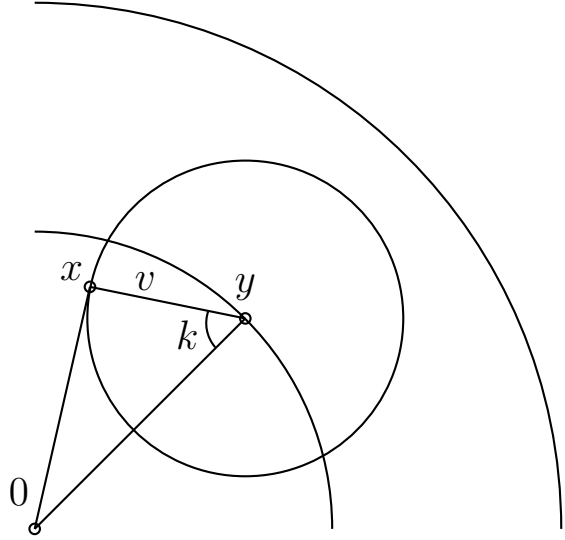

\begin{theorem}\label{thm_rjrho}
For all $x,y\in\B^n$  such that $0\leq r_l\leq|x|\leq|y|\leq r_u<1$,
\begin{align*}
&\frac{1+r_l^2}{2}{\rm th}\frac{\rho_{\B^n}(x,y)}{2}
\leq j^*_{\B^n}(x,y)
\leq\frac{1+r_u}{2}{\rm th}\frac{\rho_{\B^n}(x,y)}{2}
\\
&\text{if}\quad r_l<\frac{(48\sqrt{33}+208)^{1\slash3}}{6}-\frac{16}{3(48\sqrt{33}+208)^{1\slash3}}-\frac{1}{3}
\approx0.2955977425,\quad\text{and}
\\
&\frac{1+r_l}{\sqrt{5+2r_l+r_l^2}}{\rm th}\frac{\rho_{\B^n}(x,y)}{2}
\leq j^*_{\B^n}(x,y)
\leq\frac{1+r_u}{2}{\rm th}\frac{\rho_{\B^n}(x,y)}{2}
\quad\text{otherwise},
\end{align*}
and these inequalities have the best possible constants depending only on $r_l$ and $r_u$.
\end{theorem}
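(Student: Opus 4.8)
The plan is to adapt the method of the proof of Theorem~\ref{thm_rtrho}. By Remark~\ref{rmk_nwlg} we may take $n=2$. Since $0\le|x|\le|y|$, the minimum in the definition of $j^*_{\B^2}$ equals $1-|y|$, so
\begin{align*}
\frac{j^*_{\B^2}(x,y)}{{\rm th}(\rho_{\B^2}(x,y)/2)}
=\frac{\sqrt{|x-y|^2+(1-|x|^2)(1-|y|^2)}}{|x-y|+2(1-|y|)}.
\end{align*}
Write $a=|x|$, $b=|y|$, $c=(1-a^2)(1-b^2)$, and treat the right-hand side as a function $g(v)=\sqrt{v^2+c}\,/(v+2(1-b))$ of $v=|x-y|\in[b-a,a+b]$. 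Differentiating as in the proof of Theorem~\ref{thm_rtrho} shows that $g$ decreases on $[0,v^*]$ and increases afterwards, where $v^*=c/(2(1-b))=(1-a^2)(1+b)/2$; since $v^*>b-a$ always, the maximum of $g$ on $[b-a,a+b]$ is at an endpoint and the minimum is at $v^*$ if $v^*\le a+b$ and at $v=a+b$ otherwise.

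For the upper bound I would use the identities $(b-a)^2+c=(1-ab)^2$ and $(a+b)^2+c=(1+ab)^2$ to get $g(b-a)=(1-ab)/(2-a-b)$ and $g(a+b)=(1+ab)/(2+a-b)$, and then $(1-ab)(2+a-b)-(1+ab)(2-a-b)=2a(1-b)^2\ge0$, so the maximum over $v$ is $(1-ab)/(2-a-b)$. Its derivative in $a$ equals $(1-b)^2/(2-a-b)^2>0$, so it is largest at $a=b$, where it equals $(1+b)/2$, which increases in $b$; hence the supremum over the admissible points is $(1+r_u)/2$, approached as $x\to y$ with $|y|=r_u$.

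For the lower bound, at $v=v^*$ one computes $g(v^*)^2=v^*/(v^*+2(1-b))=(1-a^2)(1+b)\big/\big(5-3b-a^2(1+b)\big)$, which is decreasing in $a$; together with $\partial_a\big((1+ab)/(2+a-b)\big)=-(1-b)^2/(2+a-b)^2<0$ this shows $\min_v g$ is decreasing in $a$ for fixed $b$ (continuously across the junction $v^*=a+b$ between the two regimes), so the optimum is at $a=b=:r$. For $a=b=r$ one finds $\min_v g=(1+r^2)/2$ if $v^*>2r$ and $\min_v g=(1+r)/\sqrt{r^2+2r+5}$ if $v^*\le2r$, and $v^*\le2r$ is equivalent to $r^3+r^2+3r-1\ge0$. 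Both expressions increase in $r$ and coincide at the positive root $t_0$ of $r^3+r^2+3r-1$, so the global minimum over $r\in[r_l,r_u]$ occurs at $r=r_l$ and equals $(1+r_l^2)/2$ for $r_l<t_0$ and $(1+r_l)/\sqrt{r_l^2+2r_l+5}$ for $r_l\ge t_0$. Solving the cubic by Cardano's formula identifies $t_0$ with the displayed radical; sharpness follows by taking $|x|=|y|=r_l$ with $x=-y$ in the first case and with $|x-y|=v^*$ in the second.

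The delicate point is the lower bound: the one-variable minimizer $v^*$ need not lie in the admissible interval $[b-a,a+b]$, which is exactly what produces the case split, and one has to check that in both regimes the two-variable minimum is genuinely attained at the corner $a=b=r_l$ — this rests on the monotonicity of $\min_v g$ in $a$ (valid across the regime boundary by continuity) and then in $r$ along the diagonal. Matching the algebraic threshold $r^3+r^2+3r-1=0$ with the closed form $\approx0.2955977425$ via Cardano is the last, purely computational, step.
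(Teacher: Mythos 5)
Your argument is correct: I checked the key identities ($(b-a)^2+c=(1-ab)^2$, $(a+b)^2+c=(1+ab)^2$, the difference $(1-ab)(2+a-b)-(1+ab)(2-a-b)=2a(1-b)^2$, $g(v^*)^2=v^*/(v^*+2(1-b))$, and the equivalence of $v^*\le 2r$ at $a=b=r$ with $r^3+r^2+3r-1\ge0$), and your two candidate minima and Cardano threshold match the statement. The overall method is the same as the paper's — reduce to $n=2$, write the quotient $j^*_{\B^2}(x,y)/{\rm th}(\rho_{\B^2}(x,y)/2)$ explicitly, and run an elementary three-parameter optimization — but you slice it differently. The paper first fixes $v=|x-y|$ and $|y|$ and varies the angle $k=\arg(y/(y-x))$, observing that the quotient is monotone in $\cos k$, so the extremes occur at $|x|=|y|$ and at the collinear position; it then optimizes the resulting two sharp curves over $v\in[0,2|y|]$, with the same critical point (its $v_0=(1-|y|)(1+|y|)^2/2$ is your $v^*$ at $a=b$) and the same cubic. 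You instead fix $|x|,|y|$, vary $v$ over $[b-a,a+b]$ using unimodality of $g$, and then push $a$ up to $b$; your ordering has the minor advantage that the monotonicity of $\min_v g$ in $a$ across the regime junction makes it transparent that the constraint $|x|\ge r_l$ is inactive for the lower bound, whereas the paper's ordering produces intermediate two-parameter bounds in $(v,|y|)$. Both routes land on the identical extremal configurations ($x=-y$ with $|x|=|y|=r_l$, or $|x|=|y|=r_l$ with $|x-y|=v^*$, and $x\to y$ with $|y|=r_u$). The only caveat, shared with the paper, is that the upper-bound constant (and the lower one when $r_l=0$) is attained only in a limit, so ``best possible'' rather than ``attained'' is the right reading of sharpness.
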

\begin{proof}
By Remark \ref{rmk_nwlg}, fix $n=2$. Now,
\begin{align*}
\frac{j^*_{\B^2}(x,y)}{{\rm th}(\rho_{\B^2}(x,y)\slash2)}
=\frac{\sqrt{|x-y|^2+(1-|x|^2)(1-|y|^2)}}{|x-y|+2(1-|y|)}.
\end{align*}
Suppose that $v=|x-y|>0$ is fixed. Clearly, $x\in S^1(y,v)\cap\overline{B}^2(0,|y|)$. The value of the smaller angle between lines $L(0,y)$ and $L(x,y)$ is $k=\arg(y\slash(y-x))$. By writing the distance $|x|$ with the law of cosines, we will have
\begin{align*}
\frac{j^*_{\B^2}(x,y)}{{\rm th}(\rho_{\B^2}(x,y)\slash2)}
=\frac{\sqrt{v^2+(1-v^2-|y|^2+2|y|v\cos(k))(1-|y|^2)}}{v+2(1-|y|)},
\end{align*}
which is clearly increasing with respect to $\cos(k)$ and thus decreasing with respect to $k$. Clearly, $k$ is at greatest when $|x|=|y|$ and at lowest when $x=y-yv\slash|y|$. It follows from this that
\begin{align}\label{bounds_rjrho}
\frac{\sqrt{v^2+(1-|y|^2)^2}}{v+2(1-|y|)}
\leq\frac{j^*_{\B^2}(x,y)}{{\rm th}(\rho_{\B^2}(x,y)\slash2)}
\leq\frac{1+|y|v-|y|^2}{v+2(1-|y|)},
\end{align}
and these bounds are sharp. By differentiation,
\begin{align}
&\frac{\partial}{\partial v}\left(\frac{\sqrt{v^2+(1-|y|^2)^2}}{v+2(1-|y|)}\right)
=\frac{(1-|y|)(2v-(1-|y|)(1+|y|)^2)}{\sqrt{v^2+(1-|y|^2)^2}(v+2(1-|y|))^2},\nonumber\\
&\frac{\partial}{\partial v}\left(\frac{1+|y|v-|y|^2}{v+2(1-|y|)}\right)
=-\left(\frac{1-|y|}{v+2(1-|y|)}\right)^2<0.\label{deri_jrl}
\end{align}
Denote $v_0=(1-|y|)(1+|y|)^2\slash2$. The first derivative above is positive if $v<v_0$, zero if $v=v_0$ and either negative or positive whenever $v>v_0$, depending on the value of $|y|$. Furthermore, by the triangle inequality $0\leq v\leq 2|y|$. Consequently, the lower bound in \eqref{bounds_rjrho} obtains its minimum value either at $v=2|y|$ or $v=v_0$, but $v=v_0$ is only possible if $v_0\leq2|y|$. If $v=2|y|$, the lower bound in \eqref{bounds_rjrho} is
\begin{align}\label{lbjr0}
\frac{\sqrt{4|y|^2+(1-|y|^2)^2}}{2|y|+2(1-|y|)}
=\frac{1+|y|^2}{2},
\end{align}
and, if $v=v_0$, this same lower bound is
\begin{align}\label{lbjr1}
\frac{\sqrt{v_0^2+(1-|y|^2)^2}}{v_0+2(1-|y|)}
=\frac{\sqrt{(1-|y|)^2(1+|y|)^4\slash4+(1-|y|^2)^2}}{(1-|y|)(1+|y|)^2\slash2+2(1-|y|)}
=\frac{1+|y|}{\sqrt{5+2|y|+|y|^2}}.
\end{align}
Since
\begin{align}\label{ine_whichjr}
\frac{1+|y|}{\sqrt{5+2|y|+|y|^2}}\leq\frac{1+|y|^2}{2}
\quad\Leftrightarrow\quad
(1-3|y|-|y|^2-|y|^3)^2\geq0,
\end{align}
the quotient \eqref{lbjr1} is always less than the quotient \eqref{lbjr0}, so the minimum value of the lower bound \eqref{bounds_rjrho} is \eqref{lbjr1} whenever $v_0\leq2|y|$ and \eqref{lbjr0} otherwise. By the cubic formula, we will have 
\begin{align*}
&v_0\leq2|y|
\quad\Leftrightarrow\quad
1-3|y|-|y|^2-|y|^3\leq0\\
&\Leftrightarrow\quad
|y|\geq\frac{(48\sqrt{33}+208)^{1\slash3}}{6}-\frac{16}{3(48\sqrt{33}+208)^{1\slash3}}-\frac{1}{3}
\approx0.2955977425.
\end{align*}
It can be shown by differentiation that both the quotients \eqref{lbjr0} and \eqref{lbjr1} are increasing with respect to $|y|\in[r_l,r_u]$, so the first parts of the inequalities in the theorem follow. Next, consider the derivative \eqref{deri_jrl}. We see from it that the upper bound in \eqref{bounds_rjrho} is always decreasing with respect to $v$ and therefore it is at greatest with $v\to0^+$, so
\begin{align*}
\frac{j^*_{\B^2}(x,y)}{{\rm th}(\rho_{\B^2}(x,y)\slash2)}
\leq\lim_{v\to0^+}\frac{1+|y|v-|y|^2}{v+2(1-|y|)}
=\frac{1+|y|}{2}.
\end{align*}
Since $(1+|y|)\slash2$ is increasing with respect to $|y|$, the rest of the theorem follows.
\end{proof}

\begin{theorem}\label{thm_rprho}
For all $x,y\in\B^n$ such that $0\leq r_l\leq|x|\leq|y|\leq r_u<1$, \begin{align*}
\frac{1+r_l}{2}{\rm th}\frac{\rho_{\B^n}(x,y)}{2}
\leq p_{\B^n}(x,y)\leq\frac{1+r_u^2}{2\sqrt{1-2r_u+2r_u^2}}{\rm th}\frac{\rho_{\B^n}(x,y)}{2}   
\end{align*}
and this inequality has the best possible constants depending only on $r_l$ and $r_u$.
\end{theorem}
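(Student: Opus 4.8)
The plan is to mimic the structure of the proofs of Theorems \ref{thm_rtrho} and \ref{thm_rjrho}. By Remark \ref{rmk_nwlg} we fix $n=2$. Using the formulas for $p_{\B^2}$ and $\operatorname{th}(\rho_{\B^2}/2)$, the key quotient is
\begin{align*}
\frac{p_{\B^2}(x,y)}{\operatorname{th}(\rho_{\B^2}(x,y)/2)}
=\frac{\sqrt{|x-y|^2+(1-|x|^2)(1-|y|^2)}}{\sqrt{|x-y|^2+4(1-|x|)(1-|y|)}},
\end{align*}
since $d_{\B^2}(x)=1-|x|$ and $A[x,y]=\sqrt{|x-y|^2+(1-|x|^2)(1-|y|^2)}$. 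Squaring, this reduces to analysing $f(v)=(v+(1-|x|^2)(1-|y|^2))/(v+4(1-|x|)(1-|y|))$ with $v=|x-y|^2$, a Möbius function of $v$ whose monotonicity in $v$ is governed by the sign of $(1-|x|^2)(1-|y|^2)-4(1-|x|)(1-|y|)=(1-|x|)(1-|y|)((1+|x|)(1+|y|)-4)$, which is negative on $[0,1)^2$; hence $f$ is decreasing in $v$, so the quotient is extremal at the endpoints $v=0$ and $v=(|x|+|y|)^2$ of the range allowed by the triangle inequality.

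At $v=0$ the quotient equals $\sqrt{(1-|x|^2)(1-|y|^2)}/(2\sqrt{(1-|x|)(1-|y|)})=\sqrt{(1+|x|)(1+|y|)}/2$, and at $v=(|x|+|y|)^2$ it equals $(1+|x||y|)/\sqrt{(|x|+|y|)^2+4(1-|x|)(1-|y|)}$ (using the computation $|x-y|^2+(1-|x|^2)(1-|y|^2)\le (|x|+|y|)^2+(1-|x|^2)(1-|y|^2)=(1+|x||y|)^2$ already seen in \eqref{ubrt1}). For the lower bound I expect the minimum to occur at the first endpoint configuration that produces the factor $(1+|x|)/2$-type term: more precisely, after plugging in the extremal $v$, one minimises over $|x|,|y|$ with $r_l\le|x|\le|y|\le r_u$. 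I anticipate that the lower bound $\frac{1+r_l}{2}\operatorname{th}\frac{\rho}{2}$ comes out by taking $|x|=|y|=r_l$ (so that $p_{\B^n}(x,y)=\operatorname{th}\frac{\rho}{4}\cdot\frac{?}{}$, or more directly from Lemma \ref{lem_jwsprho}(2) combined with an endpoint computation), and the upper bound $\frac{1+r_u^2}{2\sqrt{1-2r_u+2r_u^2}}$ arises from the $v=(|x|+|y|)^2$ endpoint with $|x|=|y|=r_u$, since then $(1+r_u^2)/\sqrt{4r_u^2+4(1-r_u)^2}=(1+r_u^2)/(2\sqrt{1-2r_u+2r_u^2})$. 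So the strategy is: reduce to the one-variable problem in $v$, identify the two endpoint quotients, then optimise each over the admissible rectangle for $(|x|,|y|)$.

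The main obstacle I foresee is the second optimisation step — showing that the relevant endpoint quotient, as a function of $(|x|,|y|)$ on the triangle $r_l\le|x|\le|y|\le r_u$, is monotone in the right direction so that the extreme values land exactly at the claimed constants. For the upper bound this means checking that $(1+|x||y|)/\sqrt{(|x|+|y|)^2+4(1-|x|)(1-|y|)}$ is maximised at $|x|=|y|=r_u$; differentiating in $|x|$ (with $|y|$ fixed) and in the diagonal direction should show monotone increase, but the algebra is heavier than in the $t$- and $j^*$-cases because both numerator and denominator depend on both variables nontrivially. One also has to rule out that the $v=0$ endpoint ever beats the $v=(|x|+|y|)^2$ endpoint for the upper bound — i.e. compare $\sqrt{(1+|x|)(1+|y|)}/2$ with $(1+|x||y|)/\sqrt{(|x|+|y|)^2+4(1-|x|)(1-|y|)}$ — which should follow from an inequality analogous to \eqref{ine_whichjr}. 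Finally, sharpness is established as in the previous theorems: for the upper bound let $x,y$ approach the common value $r_u$ along a diameter (forcing $v\to(|x|+|y|)^2$, i.e. $x\to-y$ direction scaled appropriately), and for the lower bound take $x=y\to$ a point of modulus $r_l$, or the appropriate degenerate configuration, to see the constants cannot be improved.
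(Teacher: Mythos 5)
Your overall route is the same as the paper's: reduce to the quotient $p_{\B^2}(x,y)\slash{\rm th}(\rho_{\B^2}(x,y)\slash2)$, view its square as a M\"obius function of $v=|x-y|^2$, evaluate at the endpoints allowed by the triangle inequality, and then optimise the endpoint expressions over $r_l\leq|x|\leq|y|\leq r_u$. There is, however, a concrete sign error at the crux. For $f(v)=(v+a)\slash(v+b)$ one has $f'(v)=(b-a)\slash(v+b)^2$, and here $a=(1-|x|^2)(1-|y|^2)<b=4(1-|x|)(1-|y|)$ because $(1+|x|)(1+|y|)<4$; so the quotient is \emph{increasing} in $v$, not decreasing as you assert. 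Your write-up is internally inconsistent on this point: you then attribute the upper bound to the endpoint $v=(|x|+|y|)^2$, which is only correct for an increasing function. Had you followed your stated monotonicity consistently, the upper constant would have come out as $(1+r_u)\slash2$, which is strictly smaller than the sharp constant $(1+r_u^2)\slash(2\sqrt{1-2r_u+2r_u^2})$ for $0<r_u<1$ and hence false, with an analogous failure for the lower bound. Once the sign is corrected, your endpoint values are right, and no comparison between the two $v$-endpoints is needed (so the worry you raise about the $v=0$ endpoint "beating" the other one evaporates); note also that the tight lower endpoint is $v=(|y|-|x|)^2$, giving $(1-|x||y|)\slash(2-|x|-|y|)$ as in the paper, although your relaxed choice $v=0$, yielding $\sqrt{(1+|x|)(1+|y|)}\slash2\geq(1+r_l)\slash2$, still produces the correct sharp lower constant.

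The step you flag as the main obstacle, namely that $(1+|x||y|)\slash\sqrt{(|x|+|y|)^2+4(1-|x|)(1-|y|)}$ is maximised at $|x|=|y|=r_u$ over the triangle $r_l\leq|x|\leq|y|\leq r_u$, is left undone in your proposal and is genuinely the remaining content of the proof. The paper settles it by differentiating the square of this expression in $|x|$ for fixed $|y|$; the derivative is a manifestly positive quantity over a square, so the maximum sits on the diagonal $|x|=|y|$, after which the one-variable function $(1+|y|^2)\slash(2\sqrt{1-2|y|+2|y|^2})$ is checked to be increasing in $|y|$. With that computation supplied and the monotonicity in $v$ corrected, your argument becomes the paper's proof.
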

\begin{proof}
Fix $n=2$ by Remark \ref{rmk_nwlg} and consider the quotient
\begin{align}\label{quo_rprho}
\frac{p_{\B^2}(x,y)}{{\rm th}(\rho_{\B^2}(x,y)\slash2)}
=\sqrt{\frac{|x-y|^2+(1-|x|^2)(1-|y|^2)}{|x-y|^2+4(1-|x|)(1-|y|)}}.
\end{align}
The expression above is increasing with respect to $u=|x-y|^2$ because, by differentiation,
\begin{align*}
\frac{\partial}{\partial u}\left(\frac{u+(1-|x|^2)(1-|y|^2)}{u+4(1-|x|)(1-|y|)}\right)
=\frac{(1-|x|)(1-|y|)(2-|x|-|y|)}{(u+4(1-|x|)(1-|y|))^2}>0.
\end{align*}
By triangle inequality, $|y|-|x|\leq u\leq|x|+|y|$ and the both equalities are possible here. Consequently, the strictest bounds only depending on $|x|$ and $|y|$ for the quotient \eqref{quo_rprho} can be written as
\begin{align}\label{bounds_rprho}
\begin{split}
&\frac{1-|x||y|}{2-|x|-|y|}
=\sqrt{\frac{1-2|x||y|+|x|^2|y|^2}{4-4|x|-4|y|+2|x||y|+|x|^2+|y|^2}}
\leq\frac{p_{\B^2}(x,y)}{{\rm th}(\rho_{\B^2}(x,y)\slash2)}\\
\leq&\sqrt{\frac{1+2|x||y|+|x|^2|y|^2}{4-4|x|-4|y|+6|x||y|+|x|^2+|y|^2}}.
\end{split}
\end{align}
By differentiation,
\begin{align*}
&\frac{\partial}{\partial|y|}\left(\frac{1-|x||y|}{2-|x|-|y|}\right)=\left(\frac{1-|x|}{2-|x|-|y|}\right)^2>0,\\
&\frac{\partial}{\partial|x|}\left(\frac{1+2|x||y|+|x|^2|y|^2}{4-4|x|-4|y|+6|x||y|+|x|^2+|y|^2}\right)\\
&=\frac{2(1+|x||y|)(1-|y|)(2-|x|-|y|^2+3|y|(1-|x|))}{(4-4|x|-4|y|+6|x||y|+|x|^2+|y|^2)^2}>0,
\end{align*}
so the lower bound in \eqref{bounds_rprho} is increasing with respect to $|y|$ and the upper bound in \eqref{bounds_rprho} increasing with respect to $|x|$. Consequently, it follows from $|x|\leq|y|$ that
\begin{align*}
\frac{1+|x|}{2}
=\frac{1-|x|^2}{2(1-|x|)}
\leq\frac{p_{\B^2}(x,y)}{{\rm th}(\rho_{\B^2}(x,y)\slash2)}
\leq\sqrt{\frac{1+2|y|^2+|y|^4}{4-8|y|+8|y|^2}}
=\frac{1+|y|^2}{2\sqrt{1-2|y|+2|y|^2}}.
\end{align*}
The both the bounds above are increasing with respect to $|x|$ and $|y|$, so the result follows.
\end{proof}

Now, let us consider the Barrlund metric.

\begin{theorem}\label{thm_rbrho}
For all $x,y\in\B^n$ such that $0\leq r_l\leq|x|\leq|y|\leq r_u<1$,
\begin{align*}
\sqrt{\frac{1+r_l^2}{2}}{\rm th}\frac{\rho_{\B^n}(x,y)}{2}
\leq b_{\B^n,2}(x,y)
\leq\frac{1+r_u}{\sqrt{2}}{\rm th}\frac{\rho_{\B^n}(x,y)}{2} 
\end{align*}
and this inequality has the best possible constants depending only on $r_l$ and $r_u$.
\end{theorem}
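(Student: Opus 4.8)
\emph{Proof proposal.} The plan is to mimic the proofs of Theorems \ref{thm_rtrho}--\ref{thm_rprho} and reduce the statement to a one‑variable optimization of the squared quotient. By Remark \ref{rmk_nwlg} we may take $n=2$. Combining the formula $b_{\B^n,2}(x,y)=|x-y|\big/\sqrt{2+|x|^2+|y|^2-2|x+y|}$ recorded above with ${\rm th}(\rho_{\B^n}(x,y)/2)=|x-y|\big/\sqrt{|x-y|^2+(1-|x|^2)(1-|y|^2)}$ gives
\begin{align*}
\left(\frac{b_{\B^n,2}(x,y)}{{\rm th}(\rho_{\B^n}(x,y)/2)}\right)^2=\frac{|x-y|^2+(1-|x|^2)(1-|y|^2)}{2+|x|^2+|y|^2-2|x+y|}.
\end{align*}
Writing $a=|x|$, $b=|y|$ (with $a\le b$ without loss of generality) and $s=|x+y|$, the parallelogram identity $|x-y|^2=2a^2+2b^2-s^2$ turns the right‑hand side into
\begin{align*}
Q(s):=\frac{(1+a^2)(1+b^2)-s^2}{2+a^2+b^2-2s},\qquad s\in[b-a,\,a+b],
\end{align*}
the range of $s$ coming from the triangle inequality. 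The task is to bound $\max_s Q(s)$ and $\min_s Q(s)$ and then optimize over $r_l\le a\le b\le r_u$.

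The next step is to record the elementary identities
\begin{align*}
&Q(s)=(1+a^2)-\frac{(1+a^2-s)^2}{2+a^2+b^2-2s},\qquad Q'(s)=\frac{2(1+a^2-s)(1+b^2-s)}{(2+a^2+b^2-2s)^2},\\
&Q(a+b)=\frac{(1-ab)^2}{(1-a)^2+(1-b)^2},\qquad Q(b-a)=\frac{(1+ab)^2}{(1+a)^2+(1-b)^2}.
\end{align*}
Since $2+a^2+b^2-2s\ge(1-a)^2+(1-b)^2>0$ and $1+b^2-s\ge(1-b)^2>0$ on the whole interval, $Q'$ has the sign of $1+a^2-s$, so $Q$ increases on $[b-a,\min\{a+b,\,1+a^2\}]$ and decreases afterwards; in particular $\min_s Q(s)$ is attained at an endpoint and $Q(s)\le 1+a^2$ everywhere.

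For the upper bound, if $1+a^2\ge a+b$ then $\max_s Q(s)=Q(a+b)$, and $\partial_a Q(a+b)=2(1-ab)(1-b)(1-a-b+b^2)\big/\bigl((1-a)^2+(1-b)^2\bigr)^2\ge0$ because $1-a-b+b^2\ge(1-b)^2$ when $a\le b$; hence $Q(a+b)\le Q(b,b)=(1+b)^2/2\le(1+r_u)^2/2$. If instead $1+a^2<a+b$, then by the first identity $\max_s Q(s)=Q(1+a^2)=1+a^2$, and the case hypothesis together with $b\le r_u$ gives $1+a^2<a+b\le a+r_u\le 2r_u\le(1+r_u)^2/2$. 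Either way $Q(s)\le(1+r_u)^2/2$, which is the right‑hand inequality; the constant is best possible since equality is approached as $x\to y$ along a ray at distance $r_u$ from the origin.

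For the lower bound one first decides which endpoint is smaller: cross‑multiplying, $Q(b-a)\le Q(a+b)$ reduces to $4a(1-b)\bigl((1-b)^2+b(1-a^2)\bigr)\ge0$, which is clear, so $\min_s Q(s)=Q(b-a)$. Since $Q(b-a)$ is increasing in $b$ and $Q(b-a)\big|_{b=a}=(1+a^2)/2$ is increasing in $a$, we get $Q(b-a)\ge(1+r_l^2)/2$, with equality for $|x|=|y|=r_l$, $x=-y$. I expect the only genuinely delicate point to be the interior maximum $s=1+a^2$ of $Q$ in the upper bound: one must notice that this value of $s$ lies in the admissible range only when $b>1+a^2-a$, and only then combine it with $b\le r_u$ — without that observation the crude estimate $1+a^2\le 1+r_u^2$ is useless, since $1+r_u^2>(1+r_u)^2/2$. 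Everything else is the same type of monotonicity computation already carried out for the $t$‑metric, the $j^*$‑metric and the point pair function.
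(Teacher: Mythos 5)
Your proposal is correct and follows the same overall route as the paper: reduce to $n=2$, use $|x-y|^2=2|x|^2+2|y|^2-|x+y|^2$ to write the squared quotient as $Q(s)=\bigl((1+a^2)(1+b^2)-s^2\bigr)/(2+a^2+b^2-2s)$ with $a=|x|$, $b=|y|$, $s=|x+y|\in[b-a,a+b]$, optimize over $s$, and then exploit exactly the same endpoint values $Q(b-a)=(1+ab)^2/((1+a)^2+(1-b)^2)$, $Q(a+b)=(1-ab)^2/((1-a)^2+(1-b)^2)$ and their monotonicity in $b$ and $a$ respectively. Where you genuinely diverge is the optimization over $s$, and there your version is the right one: the paper asserts that $Q$ is increasing on the whole interval, resting on the claim $v_0>v_1^2/4$, but in fact $v_1^2-4v_0=(|x|^2-|y|^2)^2\ge0$, so $Q'(s)$ has the sign of $(1+a^2-s)(1+b^2-s)$ exactly as you compute, and $Q$ turns around at $s=1+a^2$ whenever $1+a^2<a+b$ (for instance $a=0.5$, $b=0.9$ gives an interior maximum $Q(1.25)=1.25>Q(1.4)\approx1.16$). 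Your interior-maximum case, closed by the observation $1+a^2<a+b\le 2r_u\le(1+r_u)^2/2$, is therefore not an optional refinement but a necessary repair of the argument (the theorem's stated constants survive it), and your endpoint comparison $Q(b-a)\le Q(a+b)$, which the paper's (flawed) monotonicity claim made superfluous, is needed in your setup to locate the minimum; I checked that its cross-multiplied form $4a(1-b)\bigl((1-b)^2+b(1-a^2)\bigr)\ge0$ is right. The only step you leave as a bare assertion is that $Q(b-a)$ is increasing in $b$; that is a one-line derivative, since $\partial_bQ(b-a)$ has numerator $2(1+ab)\bigl(a((1+a)^2+(1-b)^2)+(1-b)(1+ab)\bigr)>0$ — worth writing out, because the factorization the paper prints for this derivative contains the factor $1-|y|-|x|+|x|^2$, which can be negative, so the paper's justification of this monotonicity is also not literally correct even though the conclusion is. Your sharpness discussion (radial points with $|y|=r_u$, $x\to y$ for the upper constant; antipodal points with $|x|=|y|=r_l$ for the lower) matches the extremal configurations implicit in the paper.
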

\begin{proof}
By Remark \ref{rmk_nwlg}, we can suppose without loss of generality that $n=2$. Now,
\begin{align*}
\frac{b_{\B^2,2}(x,y)}{{\rm th}(\rho_{\B^2}(x,y)\slash2)}
=\sqrt{\frac{|x-y|^2+(1-|x|^2)(1-|y|^2)}{2+|x|^2+|y|^2-2|x+y|}}. 
\end{align*}
Because $|x-y|^2=2|x|^2+2|y|^2-|x+y|^2$, this can be equivalently written as
\begin{align}\label{quo_rbrho}
\frac{b_{\B^2,2}(x,y)}{{\rm th}(\rho_{\B^2}(x,y)\slash2)}
=\sqrt{\frac{1+|x|^2+|y|^2+|x|^2|y|^2-|x+y|^2}{2+|x|^2+|y|^2-2|x+y|}}. \end{align}
Denote
\begin{align*}
u=|x+y|,\quad
v_0=1+|x|^2+|y|^2+|x|^2|y|^2,\quad
v_1=2+|x|^2+|y|^2,
\end{align*}
and note that $v_0>v_1^2\slash 4$ because
\begin{align*}
v_1^2-4v_0=-|x|^2(2-|x|^2)-|y|^2(2-|y|^2)-2|x|^2|y|^2<0.
\end{align*}
The argument inside the square root in \eqref{quo_rbrho} can be described with a function $f:[0,1)\to\R^+$, $f(u)=(v_0-u^2)\slash(v_1-2u)$, which is increasing because
\begin{align*}
f'(u)=\frac{2(u^2-v_1u+v_0)}{(v_1-2u)^2}
>\frac{2(u^2-v_1u+v_1^2\slash 4)}{(v_1-2u)^2}
=\frac{2(u-v_1\slash 2)^2}{(v_1-2u)^2}
=\sqrt{2}>0.
\end{align*}
By the triangle inequality, $|y|-|x|\leq u\leq|x|+|y|$, where both of the equalities are clearly possible, and thus the strictest possible bounds for the quotient \eqref{quo_rbrho} with fixed choice of $|x|,|y|$ can be written as
\begin{align}\label{bounds_rbrho}
\sqrt{\frac{1+2|x||y|+|x|^2|y|^2}{2+2|x|-2|y|+|x|^2+|y|^2}} 
\leq\frac{b_{\B^2,2}(x,y)}{{\rm th}(\rho_{\B^2}(x,y)\slash2)}
\leq\sqrt{\frac{1-2|x||y|+|x|^2|y|^2}{2-2|x|-2|y|+|x|^2+|y|^2}}.
\end{align}
By differentiation,
\begin{align*}
&\frac{\partial}{\partial |y|}\left(\frac{1+2|x||y|+|x|^2|y|^2}{2+2|x|-2|y|+|x|^2+|y|^2}\right)
=\frac{2(1+|x|)(1+|x||y|)(1-|y|-|x|+|x|^2)}{(2+2|x|-2|y|+|x|^2+|y|^2)^2}>0,\\
&\frac{\partial}{\partial |x|}\left(\frac{1-2|x||y|+|x|^2|y|^2}{2-2|x|-2|y|+|x|^2+|y|^2}\right)
=\frac{2(1-|y|)(1-|x||y|)(1-|x|-|y|(1-|y|))}{(2-2|x|-2|y|+|x|^2+|y|^2)^2}>0,
\end{align*}
so the lower bound in \eqref{bounds_rbrho} is increasing with respect to $|y|$ and the upper bound in \eqref{bounds_rbrho} is increasing with respect to $|x|$. Since $|x|\leq|y|$, it follows that
\begin{align*}
\sqrt{\frac{1+|x|^2}{2}}
\leq\frac{b_{\B^2,2}(x,y)}{{\rm th}(\rho_{\B^2}(x,y)\slash2)}
\leq\frac{1+|y|}{\sqrt{2}},
\end{align*}
from which the result follows.
\end{proof}

\begin{corollary}\label{cor_brhoB}
For all points $x,y\in\B^n$,
\begin{align*}
\frac{1}{\sqrt{2}}{\rm th}\frac{\rho_{\B^n}(x,y)}{2}
\leq b_{\B^n,2}(x,y)
\leq\sqrt{2}{\rm th}\frac{\rho_{\B^n}(x,y)}{2} 
\end{align*}
and the constants here are the best ones possible.
\end{corollary}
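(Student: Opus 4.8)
The plan is to deduce this immediately from Theorem~\ref{thm_rbrho} by specializing the parameters $r_l$ and $r_u$ to their extreme admissible values. First I would fix arbitrary $x,y\in\B^n$ and, using the symmetry of $b_{\B^n,2}$ and $\rho_{\B^n}$ in their two arguments, assume without loss of generality that $|x|\le|y|$. Then, choosing $r_l=0$ and any $r_u$ with $|y|\le r_u<1$, the hypothesis $0\le r_l\le|x|\le|y|\le r_u<1$ of Theorem~\ref{thm_rbrho} is satisfied.

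With this choice Theorem~\ref{thm_rbrho} yields
\begin{align*}
\frac{1}{\sqrt{2}}\,{\rm th}\frac{\rho_{\B^n}(x,y)}{2}
\le b_{\B^n,2}(x,y)
\le\frac{1+r_u}{\sqrt{2}}\,{\rm th}\frac{\rho_{\B^n}(x,y)}{2},
\end{align*}
which already gives the lower bound of the corollary. For the upper bound I would note that $r_u<1$ forces $(1+r_u)\slash\sqrt{2}<\sqrt{2}$, so $b_{\B^n,2}(x,y)\le\sqrt{2}\,{\rm th}(\rho_{\B^n}(x,y)\slash2)$ (in fact strictly, for every fixed pair). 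This disposes of the two inequalities.

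It then remains to verify that neither constant can be improved, and since the constants are realized only in a limit this has to be done by displaying near-extremal families. For the lower constant I would take $y=-x$ and let $|x|\to0^+$; reducing to $n=2$ by Remark~\ref{rmk_nwlg} and using $A[x,-x]^2=4|x|^2+(1-|x|^2)^2=(1+|x|^2)^2$ together with $|x+(-x)|=0$, one obtains
\begin{align*}
\frac{b_{\B^n,2}(x,-x)}{{\rm th}(\rho_{\B^n}(x,-x)\slash2)}
=\frac{2|x|\slash\sqrt{2(1+|x|^2)}}{2|x|\slash(1+|x|^2)}
=\sqrt{\frac{1+|x|^2}{2}}\;\longrightarrow\;\frac{1}{\sqrt{2}},
\end{align*}
so $1\slash\sqrt{2}$ is best possible. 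For the upper constant I would instead take $x$ and $y$ on a common radius of $\B^n$ with $|x|,|y|\to1^-$, in which case $b_{\B^n,2}(x,y)\slash{\rm th}(\rho_{\B^n}(x,y)\slash2)\to\sqrt{2}$; equivalently, this is exactly the sharpness assertion of Theorem~\ref{thm_rbrho} as $r_u\to1^-$. As the whole argument is merely a specialization of Theorem~\ref{thm_rbrho}, there is no substantive obstacle here; the only point needing a little care is that the extremal constants are limiting values, so sharpness must be witnessed through the two point families above rather than through single extremal configurations.
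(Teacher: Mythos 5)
Your proposal is correct and follows essentially the same route as the paper: both deduce the bounds by specializing Theorem~\ref{thm_rbrho} to $r_l=0$ and $r_u\to1^-$, and both obtain sharpness from the sharpness of that theorem. Your explicit extremal families (the antipodal pair $y=-x$ with $|x|\to0^+$ for the constant $1/\sqrt{2}$, and collinear points approaching the boundary for $\sqrt{2}$) merely make concrete what the paper leaves implicit, and your computations check out.
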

\begin{proof}
The bounds follow from Theorem \ref{thm_rbrho} by choosing $r_l=0$ and $r_u\to1^-$, and these limits are sharp because the inequality in Theorem \ref{thm_rbrho} is.  
\end{proof}

\begin{lemma}\label{lem_brhoH}
For all points $x,y\in\uhp^n$,
\begin{align*}
{\rm th}\frac{\rho_{\uhp^n}(x,y)}{2}
\leq b_{\uhp^n,2}(x,y)
\leq\sqrt{2}{\rm th}\frac{\rho_{\uhp^n}(x,y)}{2} 
\end{align*}
and the constants here are the best ones possible.
\end{lemma}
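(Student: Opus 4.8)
The plan is to get the inequality itself essentially for free from results already proved, and to reserve the actual work for checking that the constants $1$ and $\sqrt{2}$ are optimal. By Lemma \ref{lem_5ineq}(5) applied with $p=2$ (so that $2^{1-1/p}=\sqrt{2}$), one has $s_G(x,y)\le b_{G,2}(x,y)\le\sqrt{2}\,s_G(x,y)$ for every domain $G$ and all points, and by Lemma \ref{lem_jwsprho}(1) the upper half-space enjoys the \emph{equality} $s_{\uhp^n}(x,y)={\rm th}(\rho_{\uhp^n}(x,y)/2)$. Substituting the latter into the former yields
\[
{\rm th}\frac{\rho_{\uhp^n}(x,y)}{2}=s_{\uhp^n}(x,y)\le b_{\uhp^n,2}(x,y)\le\sqrt{2}\,s_{\uhp^n}(x,y)=\sqrt{2}\,{\rm th}\frac{\rho_{\uhp^n}(x,y)}{2}
\]
in a single step. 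It is precisely this equality that makes the lower constant here $1$ rather than the $1/\sqrt{2}$ of Corollary \ref{cor_brhoB}, where for the ball only $s_{\B^n}\le{\rm th}(\rho/2)$ is available.

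The remaining task, and the only place a computation is needed, is sharpness. The key elementary fact is that $\min_{z\in\partial\uhp^n}\bigl(|x-z|^2+|z-y|^2\bigr)$ is attained at the orthogonal projection of the Euclidean midpoint $(x+y)/2$ onto the bounding hyperplane, so $b_{\uhp^n,2}$ can be evaluated explicitly on any chosen pair. For the upper constant I would take $x=-e_1+e_n$ and $y=e_1+e_n$: then $d_{\uhp^n}(x)=d_{\uhp^n}(y)=1$, the minimum above equals $4$, whence $b_{\uhp^n,2}(x,y)=|x-y|/2=1$, while ${\rm ch}\,\rho_{\uhp^n}(x,y)=1+|x-y|^2/(2d_{\uhp^n}(x)d_{\uhp^n}(y))=3$ gives ${\rm th}(\rho_{\uhp^n}(x,y)/2)=1/\sqrt{2}$; the ratio is exactly $\sqrt{2}$, so the upper constant is in fact attained. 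For the lower constant I would take $x=te_n$, $y=e_n$ with $t\to0^+$: the same minimization gives $b_{\uhp^n,2}(x,y)=(1-t)/\sqrt{1+t^2}$ and ${\rm th}(\rho_{\uhp^n}(x,y)/2)=(1-t)/(1+t)$, so the ratio is $(1+t)/\sqrt{1+t^2}\to1^+$, showing $1$ cannot be replaced by anything larger.

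There is no genuine obstacle beyond bookkeeping: the argument is the two-line deduction together with two explicit evaluations of $b_{\uhp^n,2}$, each reducing to the minimization of a quadratic over a hyperplane (which attains its minimum since $|x-z|^2+|z-y|^2\to\infty$ as $|z|\to\infty$). The one thing worth verifying for internal consistency is that the lower example respects $b_{\uhp^n,2}\ge{\rm th}(\rho/2)$, i.e. $(1+t)/\sqrt{1+t^2}\ge1$, which is immediate from $(1+t)^2=1+2t+t^2>1+t^2$. One may also note, via Remark \ref{rmk_nwlg}, that both examples already lie in a $2$-plane orthogonal to $\partial\uhp^n$, so the computation is in effect the planar one.
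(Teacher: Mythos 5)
Your proof is correct, but it takes a genuinely different route from the paper. The paper fixes $n=2$ via Remark \ref{rmk_nwlg}, imports the explicit formula $b_{\uhp^2,2}(x,y)=\sqrt{2}\,|x-y|\big/\sqrt{|x-y|^2+{\rm Im}(x+y)^2}$ from \cite[Thm 3.12]{fmv}, and rewrites the quotient $b_{\uhp^2,2}(x,y)\slash{\rm th}(\rho_{\uhp^2}(x,y)\slash 2)$ as $\sqrt{2\,({\rm Re}(x-y)^2+{\rm Im}(x+y)^2)\slash({\rm Re}(x-y)^2+{\rm Im}(x-y)^2+{\rm Im}(x+y)^2)}$; the bounds $1$ and $\sqrt{2}$ and their sharpness are then read off from this single expression (minimum as ${\rm Re}(x)={\rm Re}(y)$, ${\rm Im}(y)\to 0^+$; maximum when ${\rm Im}(x)={\rm Im}(y)$). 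You instead obtain the inequality formally by combining Lemma \ref{lem_5ineq}(5) with $p=2$ and the identity $s_{\uhp^n}(x,y)={\rm th}(\rho_{\uhp^n}(x,y)\slash 2)$ from Lemma \ref{lem_jwsprho}(1), and only then do the computation needed for sharpness, evaluating $b_{\uhp^n,2}$ on two explicit pairs via the observation that the minimizing boundary point is the projection of the Euclidean midpoint. Your computations check out (the first pair gives ratio exactly $\sqrt{2}$, the second gives $(1+t)\slash\sqrt{1+t^2}\to 1^+$), and your example for the upper bound even shows the constant $\sqrt{2}$ is attained, not merely approached. What your route buys is modularity and a structural explanation of why the lower constant is $1$ here but $1\slash\sqrt{2}$ in Corollary \ref{cor_brhoB} (the equality $s_{\uhp^n}={\rm th}(\rho\slash 2)$ versus the mere inequality in the ball); what the paper's route buys is a single self-contained formula from which all extremal configurations, not just two witnesses, are visible at once.
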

\begin{proof}
By Remark \ref{rmk_nwlg}, we can fix $n=2$. From \cite[Thm 3.12, p. 11]{fmv}, we see that
\begin{align*}
b_{\uhp^2,2}(x,y)=\frac{\sqrt{2}|x-y|}{\sqrt{|x-y|^2+{\rm Im}(x+y)^2}},
\end{align*}
so we will have
\begin{align*}
\frac{b_{\uhp^2,2}(x,y)}{{\rm th}(\rho_{\uhp^2}(x,y)\slash2)}
&=\frac{\sqrt{2}|x-\overline{y}|}{\sqrt{|x-y|^2+{\rm Im}(x+y)^2}}\\
&=\sqrt{2\cdot\frac{{\rm Re}(x-y)^2+{\rm Im}(x+y)^2}{{\rm Re}(x-y)^2+{\rm Im}(x-y)^2+{\rm Im}(x+y)^2}}.
\end{align*}
Clearly, the quotient above obtains its minimum value 1 when ${\rm Re}(x)={\rm Re}(y)$ and ${\rm Im}(y)\to0^+$, and its maximum value $\sqrt{2}$ when ${\rm Im}(x)={\rm Im}(y)$, from which the result follows.
\end{proof}

The bounds for the $w$-quasi-metric and the triangular ratio metric can be created by using the inequalities found already earlier.

\begin{corollary}\label{cor_rswrho}
For all $x,y\in B^n(0,r)$  such that $0\leq r_l\leq|x|\leq|y|\leq r_u<1$,
\begin{align*}
\frac{1+r_l}{\sqrt{5+2r_l+r_l^2}}{\rm th}\frac{\rho_{\B^n}(x,y)}{2}
\leq w_{\B^n}(x,y)
\leq s_{\B^n}(x,y)
\leq\frac{1+r_u^2}{2\sqrt{1-2r_u+2r_u^2}}{\rm th}\frac{\rho_{\B^n}(x,y)}{2}. 
\end{align*}
\end{corollary}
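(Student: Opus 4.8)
The plan is to obtain Corollary \ref{cor_rswrho} purely by concatenating estimates already at hand, with no new computation. First I would note that the middle inequality $w_{\B^n}(x,y)\le s_{\B^n}(x,y)$, together with the two comparisons $j^*_{\B^n}(x,y)\le w_{\B^n}(x,y)$ and $s_{\B^n}(x,y)\le p_{\B^n}(x,y)$ that the argument will need, are all contained in Lemma \ref{lem_jwsprho}(2) (where $\B^n$ is convex, so the chain applies). Hence it suffices to bound $j^*_{\B^n}$ from below and $p_{\B^n}$ from above, and these are precisely Theorems \ref{thm_rjrho} and \ref{thm_rprho}.

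For the right-hand inequality I would chain $s_{\B^n}(x,y)\le p_{\B^n}(x,y)$ with the upper estimate of Theorem \ref{thm_rprho}, i.e. $p_{\B^n}(x,y)\le\frac{1+r_u^2}{2\sqrt{1-2r_u+2r_u^2}}\,{\rm th}\frac{\rho_{\B^n}(x,y)}{2}$, which yields the claim at once.

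For the left-hand inequality I would start from $w_{\B^n}(x,y)\ge j^*_{\B^n}(x,y)$ and apply the lower estimate of Theorem \ref{thm_rjrho}. When $r_l$ is at least the cubic threshold $\approx0.2955977425$, Theorem \ref{thm_rjrho} delivers directly $j^*_{\B^n}(x,y)\ge\frac{1+r_l}{\sqrt{5+2r_l+r_l^2}}\,{\rm th}\frac{\rho_{\B^n}(x,y)}{2}$, which is exactly the desired bound. When $r_l$ lies below the threshold, Theorem \ref{thm_rjrho} instead produces the larger coefficient $\frac{1+r_l^2}{2}$; here I would invoke the elementary inequality $\frac{1+t}{\sqrt{5+2t+t^2}}\le\frac{1+t^2}{2}$ for $t\in[0,1)$ — equivalently $(1-3t-t^2-t^3)^2\ge0$, already recorded inside the proof of Theorem \ref{thm_rjrho} — to conclude $\frac{1+r_l}{\sqrt{5+2r_l+r_l^2}}\le\frac{1+r_l^2}{2}$, so that the stated (weaker) constant is still a legitimate lower bound. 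In either case the left-hand inequality holds.

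The main, and essentially only, obstacle is this case split in the lower bound; it is dissolved by the displayed cubic inequality, which shows that the single constant $\frac{1+r_l}{\sqrt{5+2r_l+r_l^2}}$ serves uniformly for all admissible $r_l$. Since the corollary asserts no sharpness, no extremal-configuration analysis is needed, unlike in the theorems on which it rests.
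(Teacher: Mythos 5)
Your proposal is correct and follows exactly the paper's route: the paper's proof likewise cites Lemma \ref{lem_jwsprho}(2) for the chain $j^*_{\B^n}\leq w_{\B^n}\leq s_{\B^n}\leq p_{\B^n}$, Theorems \ref{thm_rjrho} and \ref{thm_rprho} for the outer bounds, and the inequality \eqref{ine_whichjr} to absorb the case split in the lower bound. Your write-up merely makes explicit the concatenation that the paper leaves to the reader.
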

\begin{proof}
Follows from Lemma \ref{lem_jwsprho}(2), Theorems \ref{thm_rjrho} and \ref{thm_rprho}, and the inequality \eqref{ine_whichjr}.
\end{proof}

\section{Conformal mappings}\label{s4}

In this section, we will study the distortion of the intrinsic metric under conformal mappings. Our main results are Theorem \ref{thm_rpconf} and Corollary \ref{cor_confhypmidrot}, out of which the former gives upper and lower bounds for the distortion of six different intrinsic metrics defined in the unit disk, and the latter deals with the triangular ratio metric only. However, before moving on to these results, let us first define what a conformal mapping actually is. 

\begin{nonsec}{\bf Conformal mappings.} \cite[Def. 3.1, p. 25]{hkvbook}
Suppose that $G,G'$ are domains in $\R^n$ and let $f:G\to G'$ be a homeomorphism between them. In other words, choose $f$ so that it is continuous and has a continuous inverse function. Now, the function $f$ is \emph{conformal} if (1) its derivative $f'$ exists and is continuous, (2) its Jacobian determinant $J_f(x)$ is non-zero at every point $x\in G$, and (3) $|f'(x)h|=|f'(x)||h|$ for all $x\in G$ and $h\in\R^n$.
\end{nonsec}

By \cite[Def. 3.7, p. 27]{hkvbook}, a continuous homeomorphism $f$ between some domains $G,G'\subset\R^n$ is \emph{sense-preserving} if $J_f(x)>0$ for all points $x\in G$ and \emph{sense-reversing} if $J_f(x)<0$ for all points $x\in G$ instead. It follows from the definition above that conformal mappings are either sense-preserving or sense-reversing. In the two-dimensional case, sense-preserving conformal mappings preserve both magnitude and orientation of an angle between any curves, while sense-reversing conformal mappings only preserve the magnitude but not the orientation. A translation, a rotation and a strecthing by a factor $r>0$ are examples of sense-preserving conformal mappings, while reflections and inversions in the circle are conformal mappings that do not preserve orientation. Another well-known type of conformal mappings is introduced below. 

\begin{nonsec}{\bf M\"obius transformations.} \cite[Ex. 3.2, pp. 25-26 \& Def. 3.6, p. 27]{hkvbook}, \cite[Def., p. 298]{geo}
Denote $\overline{\R}^n=\R^n\cup\{\infty\}$ and let $\cdot$ be the symbol of the dot product. For any $t\in\R$ and $u\in\R^n\backslash\{0\}$,
\begin{align*}
P(u,t)=\{x\in\R^n\text{ }|\text{ }x\cdot u=t\}\cup\{\infty\}   
\end{align*}
is the hyperplane perpendicular at vector $u$, containing the point $t\slash|u|$. The reflection in this hyperplane is defined by the function $h_r:\overline{\R}^n\to\overline{\R}^n$,
\begin{align*}
h_r(x)=x-2(x\cdot u-t)\frac{u}{|u|^2},\quad h_r(\infty)=\infty.    
\end{align*}
Similarly, the inversion in the sphere $S^{n-1}(v,r)$ is $h_i:\overline{\R}^n\to\overline{\R}^n$,
\begin{align*}
h_i(x)=v+\frac{r^2(x-v)}{|x-v|^2},\quad h_i(v)=\infty,\quad h_i(\infty)=v.    
\end{align*}
A \emph{M\"obius transformation} is a function $f:\overline{\R}^n\to\overline{\R}^n$, $f=h_1\circ\cdots\circ h_m$, where each $h_j$ is either a reflection in some hyperplane or an inversion in a sphere, and $m\geq1$ is an integer. In the extended complex plane $\overline{\C}=\C\cup\{\infty\}$, the expression of a sense-preserving M\"obius transformation $f:\overline{\C}\to\overline{\C}$ can be written as
\begin{align}\label{exp_mob}
f(z)=\frac{sz+t}{uz+v},    
\end{align}
where $s,t,u,v\in\C$ are constants such that $sv-tu\neq0$. 
\end{nonsec}

\begin{remark}
(1) In the extended complex plane, the expression for the sense-reversing M\"obius transformation can be obtained by replacing the variable $z$ by its complex conjugate $\overline{z}$ in the expression \eqref{exp_mob}.\newline
(2) It follows from the Schwarz lemma that a conformal mapping $f:G\to G'=f(G)$, $G,G'\in\{\B^2,\uhp^2\}$ is in fact a M\"obius transformation.\newline
(3) If $G$ is a domain in $G\subset\R^n$, $n\geq3$, and $f:G\to f(G)\subset\R^n$ is a conformal mapping, then by \emph{Liouville's theorem} there is a M\"obius transformation $h$ in $\overline{\R}^n=\R^n\cup\{\infty\}$ such that $h(x)=f(x)$ for every $x\in G$ \cite[Rmk 3.44, p. 47]{hkvbook}.
\end{remark}

Let us now move on to study the distortion of the intrinsic metrics under conformal mappings. Out of the different metrics and quasi-metrics studied in this paper, only the hyperbolic metric is conformally invariant but it is clear that conformal mappings cannot distort the distances of the other metrics indefinitely because these metrics have bounds in terms of the hyperbolic metric. If the domain is the upper half-space or the unit ball, the upper bound of the distortion is 2, as shown below.

\begin{lemma}\label{lem_confG}
Suppose that $G,G'\in\{\uhp^n,\B^n\}$, $f:G\to G'=f(G)$ is a conformal mapping and $d_G$ is one of the intrinsic metrics or quasi-metrics in $\{t_G,j^*_G,w_G,s_G,p_G\}$. Then, for all $x,y\in G$,
\begin{align*}
d_G(x,y)\slash2\leq d_{G'}(f(x),f(y))\leq2d_G(x,y).    
\end{align*}
Furthermore, for $d_G\in\{w_G,s_G,p_G\}$,
\begin{align*}
&d_G(x,y)\slash2\leq d_{G'}(f(x),f(y))\leq d_G(x,y)\quad\text{if}\quad G=\uhp^n,\,G'=\B^n,\quad\text{and}\\
&d_G(x,y)\leq d_{G'}(f(x),f(y))\leq2d_G(x,y)\quad\text{if}\quad G=\B^n,\,G'=\uhp^n.
\end{align*}
\end{lemma}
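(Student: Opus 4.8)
The plan is to reduce the general statement to the two explicit ``reference'' inequalities that have already been established: the unit-ball chain of Lemma~\ref{lem_jwsprho}(2)--(3) and the upper-half-space chain of Lemma~\ref{lem_jwsprho}(1),(3). The key observation is that the hyperbolic metric is conformally invariant, so $\rho_G(x,y)=\rho_{G'}(f(x),f(y))$ for every conformal $f:G\to G'$ with $G,G'\in\{\uhp^n,\B^n\}$; hence a two-sided comparison of any $d_G\in\{t_G,j^*_G,w_G,s_G,p_G\}$ with $\mathrm{th}(\rho_G/2)$ in $G$, combined with the same comparison in $G'$, immediately yields a two-sided comparison between $d_G(x,y)$ and $d_{G'}(f(x),f(y))$ with no reference to $f$ at all.

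Concretely, I would proceed as follows. First, record from Lemma~\ref{lem_jwsprho} that for each of the five functions $d$ under consideration there are constants $c_1(G),c_2(G)$ depending only on the domain $G\in\{\uhp^n,\B^n\}$ (not on $x,y$) with
\begin{align*}
c_1(G)\,\mathrm{th}\frac{\rho_G(x,y)}{2}\leq d_G(x,y)\leq c_2(G)\,\mathrm{th}\frac{\rho_G(x,y)}{2},\qquad x,y\in G.
\end{align*}
Reading off Lemma~\ref{lem_jwsprho}: in $\uhp^n$ we have $j^*_{\uhp^n},w_{\uhp^n},s_{\uhp^n},p_{\uhp^n}$ all sandwiched between $\tfrac12$ (using Remark~\ref{rmk_th}, since $\mathrm{th}(\rho/4)\ge\tfrac12\mathrm{th}(\rho/2)$) and $1$ times $\mathrm{th}(\rho_{\uhp^n}/2)$ — in fact $w_{\uhp^n}=s_{\uhp^n}=p_{\uhp^n}=\mathrm{th}(\rho_{\uhp^n}/2)$ exactly — and $t_{\uhp^n}$ between $\tfrac12$ and $1$ times $\mathrm{th}(\rho_{\uhp^n}/2)$; in $\B^n$ the same five functions lie between $\tfrac12$ and $1$ times $\mathrm{th}(\rho_{\B^n}/2)$ (again invoking Remark~\ref{rmk_th} for the lower bound on $j^*$ and $t$, and Lemma~\ref{lem_jwsprho}(2) for $w,s,p$). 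So $c_1\in\{\tfrac12,1\}$ and $c_2\in\{1\}$ in all cases, with the sharper values $c_1=c_2=1$ for $w,s,p$ in $\uhp^n$.

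Next, fix $x,y\in G$ and set $\rho:=\rho_G(x,y)=\rho_{G'}(f(x),f(y))$ by conformal invariance. Then
\begin{align*}
d_{G'}(f(x),f(y))\leq c_2(G')\,\mathrm{th}\frac{\rho}{2}\leq\frac{c_2(G')}{c_1(G)}\,d_G(x,y),\qquad
d_{G'}(f(x),f(y))\geq c_1(G')\,\mathrm{th}\frac{\rho}{2}\geq\frac{c_1(G')}{c_2(G)}\,d_G(x,y).
\end{align*}
Plugging in the constants from the previous step gives every claim: since $c_2(G')/c_1(G)\le 1/(1/2)=2$ and $c_1(G')/c_2(G)\ge (1/2)/1=1/2$ always, we get $d_G(x,y)/2\le d_{G'}(f(x),f(y))\le 2d_G(x,y)$ for all five functions and all four domain pairs. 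For the refined statements with $d\in\{w,s,p\}$: when $G=\uhp^n,G'=\B^n$ we have $c_1(G)=c_2(G)=1$ while $c_1(G')=\tfrac12$, $c_2(G')=1$, giving upper constant $c_2(G')/c_1(G)=1$ and lower constant $c_1(G')/c_2(G)=\tfrac12$; when $G=\B^n,G'=\uhp^n$ we have $c_1(G')=c_2(G')=1$ while $c_1(G)=\tfrac12$, $c_2(G)=1$, giving upper constant $c_2(G')/c_1(G)=2$ and lower constant $c_1(G')/c_2(G)=1$. This matches the displayed refinements exactly.

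I do not expect a genuine obstacle here; the content is entirely bookkeeping of the constants already packaged in Lemma~\ref{lem_jwsprho} together with conformal invariance. The one point requiring a little care — and the closest thing to a ``hard step'' — is making sure the lower bounds $\mathrm{th}(\rho/4)\le j^*$ (and the analogous $t$ bound) are converted correctly to bounds against $\mathrm{th}(\rho/2)$ via Remark~\ref{rmk_th}, so that all five functions genuinely satisfy the same sandwich with $c_1\in\{\tfrac12,1\}$, $c_2=1$; once that is in hand the rest is the two-line division above. A remark could be added that the argument shows more generally that the distortion constant is controlled by the ratio of the domain-dependent comparison constants, which is why no hypothesis on $f$ beyond conformality is needed.
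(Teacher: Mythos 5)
Your proposal is correct and follows essentially the same route as the paper: sandwich each of the five functions between $\tfrac12\,\mathrm{th}(\rho_G/2)$ and $\mathrm{th}(\rho_G/2)$ via Lemma~\ref{lem_jwsprho} and Remark~\ref{rmk_th}, transfer through conformal invariance of $\rho$, and use the exact identity $w_{\uhp^n}=s_{\uhp^n}=p_{\uhp^n}=\mathrm{th}(\rho_{\uhp^n}/2)$ for the refined half-space cases. Your bookkeeping with the constants $c_1(G),c_2(G)$ is just a more systematic presentation of the paper's two-line chain of inequalities.
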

\begin{proof}
From Lemma \ref{lem_jwsprho} and Remark \ref{rmk_th}, we see that
\begin{align*}
\frac{1}{2}{\rm th}\frac{\rho_G(x,y)}{2}\leq d_G(x,y)\leq{\rm th}\frac{\rho_G(x,y)}{2}   
\end{align*}
for all $d_G\in\{t_G,j^*_G,w_G,s_G,p_G\}$. Thus, by the conformal invariance of the hyperbolic metric, 
\begin{align*}
\frac{1}{2}d_G(x,y)
&\leq\frac{1}{2}{\rm th}\frac{\rho_G(x,y)}{2}
=\frac{1}{2}{\rm th}\frac{\rho_{G'}(f(x),f(y))}{2}
\leq d_{G'}(f(x),f(y))
\leq{\rm th}\frac{\rho_{G'}(f(x),f(y))}{2}\\
&={\rm th}\frac{\rho_G(x,y)}{2}
\leq2d_G(x,y).
\end{align*}
By Lemma \ref{lem_jwsprho}(1), $d_{\uhp^n}(x,y)={\rm th}(\rho_{\uhp^n}(x,y)\slash2)$ for $d_G\in\{w_G,s_G,p_G\}$, from which the latter part of the result follows.
\end{proof}

\begin{lemma}
For any conformal mapping $f:\uhp^n\to\uhp^n$ and all points $x,y\in\uhp^n$,
\begin{align*}
\frac{j^*_{\uhp^n}(f(x),f(y))}{j^*_{\uhp^n}(x,y)}\leq\sqrt{2}.
\end{align*}
\end{lemma}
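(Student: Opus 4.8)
The plan is to play off the two-sided comparison between $j^*_{\uhp^n}$ and the hyperbolic metric, exploiting that $\rho_{\uhp^n}$ is the one quantity here that $f$ leaves untouched. Write $\rho=\rho_{\uhp^n}$ for brevity. First I would invoke the conformal invariance of the hyperbolic metric to get $\rho(f(x),f(y))=\rho(x,y)$ for all $x,y\in\uhp^n$. Next, I would apply Lemma \ref{lem_jwsprho}(1) to the image points $f(x),f(y)\in\uhp^n$, which gives the upper estimate $j^*_{\uhp^n}(f(x),f(y))\le{\rm th}(\rho(f(x),f(y))/2)$. Then I would apply Lemma \ref{lem_jwsprho}(1) at the original points, this time using the identity $p_{\uhp^n}(x,y)={\rm th}(\rho(x,y)/2)$, together with Lemma \ref{lem_5ineq}(2), which yields $p_{\uhp^n}(x,y)\le\sqrt{2}\,j^*_{\uhp^n}(x,y)$ in any domain, in particular in $\uhp^n$.

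Assembling these facts into a single chain,
\[
j^*_{\uhp^n}(f(x),f(y))\;\le\;{\rm th}\frac{\rho(f(x),f(y))}{2}\;=\;{\rm th}\frac{\rho(x,y)}{2}\;=\;p_{\uhp^n}(x,y)\;\le\;\sqrt{2}\,j^*_{\uhp^n}(x,y),
\]
and dividing by $j^*_{\uhp^n}(x,y)>0$ (for $x\neq y$; the ratio is vacuous when $x=y$) gives the claimed bound. Optionally, to record that $\sqrt{2}$ is best possible, one picks $x,y\in\uhp^n$ realizing equality in Lemma \ref{lem_5ineq}(2), i.e.\ with $|x-y|=2d_{\uhp^n}(x)=2d_{\uhp^n}(y)$ (so $j^*_{\uhp^n}(x,y)=\tfrac{1}{\sqrt2}{\rm th}(\rho(x,y)/2)$), and takes $f$ to be a conformal self-map of $\uhp^n$ sending the hyperbolic geodesic through $x,y$ onto a vertical line, so that $f(x),f(y)$ lie on a common half-line orthogonal to $\partial\uhp^n$ and $j^*_{\uhp^n}(f(x),f(y))={\rm th}(\rho(f(x),f(y))/2)$; then the ratio equals $\sqrt2$ exactly. (Such an $f$ exists because conformal self-maps of $\uhp^n$ are Möbius and act transitively on geodesics.)

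There is essentially no obstacle here: the argument is a formal recombination of results already in the preliminaries, and the only thing requiring a moment's care is getting the directions right — the inequality $j^*\le{\rm th}(\rho/2)$ must be used at the image points while the inequality $j^*\ge\tfrac1{\sqrt2}p=\tfrac1{\sqrt2}{\rm th}(\rho/2)$ must be used at the original points, and the two are glued by conformal invariance of $\rho$ in the middle.
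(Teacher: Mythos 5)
Your proof is correct and follows exactly the route the paper intends: the paper's own proof simply cites Lemma \ref{lem_jwsprho}(1), Lemma \ref{lem_5ineq}(2), and the conformal invariance of the hyperbolic metric, which is precisely the chain $j^*_{\uhp^n}(f(x),f(y))\le{\rm th}(\rho(f(x),f(y))/2)={\rm th}(\rho(x,y)/2)=p_{\uhp^n}(x,y)\le\sqrt{2}\,j^*_{\uhp^n}(x,y)$ that you assemble. Your additional sharpness discussion goes beyond what the lemma claims but does not affect the correctness of the main argument.
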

\begin{proof}
Follows from Lemmas \ref{lem_jwsprho}(1) and \ref{lem_5ineq}(2), and the conformal invariance of the hyperbolic metric.
\end{proof}

\begin{lemma}\label{lem_bconf}
For any conformal mapping $f:G\to G'=f(G)$ such that $G,G'\in\{\uhp^n,\B^n\}$ and for all $x,y\in G$,
\begin{align*}
&b_{G,2}(x,y)\slash\sqrt{2}
\leq b_{G',2}(f(x),f(y))\leq\sqrt{2}b_{G,2}(x,y)\quad\text{if}\quad G=G'=\uhp^n,\\
&b_{G,2}(x,y)\slash2
\leq b_{G',2}(f(x),f(y))\leq\sqrt{2}b_{G,2}(x,y)\quad\text{if}\quad G=\uhp^n,\,G'=\B^n,\\
&b_{G,2}(x,y)\slash\sqrt{2}
\leq b_{G',2}(f(x),f(y))\leq2b_{G,2}(x,y)\quad\text{if}\quad G=\B^n,\,G'=\uhp^n,\\
&b_{G,2}(x,y)\slash2
\leq b_{G',2}(f(x),f(y))\leq2b_{G,2}(x,y)\quad\text{if}\quad G=G'=\B^n.
\end{align*}
\end{lemma}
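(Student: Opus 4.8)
The plan is to deduce all four inequalities from the two two-sided comparisons between the Barrlund metric and ${\rm th}(\rho/2)$ that have already been established, namely Corollary \ref{cor_brhoB} in the unit ball and Lemma \ref{lem_brhoH} in the upper half-space, together with the conformal invariance of the hyperbolic metric. Since a conformal map between these domains preserves $\rho$ exactly, the Barrlund distance at the image points is squeezed between the same two constants times the same quantity ${\rm th}(\rho/2)$ as the Barrlund distance at the original points, so dividing the two resulting chains of inequalities eliminates the hyperbolic term and leaves a purely numerical comparison.

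Concretely, I would fix $x,y\in G$ and set $t={\rm th}(\rho_G(x,y)/2)$. For a domain $D\in\{\uhp^n,\B^n\}$ let $c_D,C_D$ denote the lower and upper constants in the comparison $c_D\,{\rm th}(\rho_D(a,b)/2)\le b_{D,2}(a,b)\le C_D\,{\rm th}(\rho_D(a,b)/2)$, so that $(c_D,C_D)=(1/\sqrt2,\sqrt2)$ for $D=\B^n$ by Corollary \ref{cor_brhoB} and $(c_D,C_D)=(1,\sqrt2)$ for $D=\uhp^n$ by Lemma \ref{lem_brhoH}. By the conformal invariance of the hyperbolic metric, $\rho_{G'}(f(x),f(y))=\rho_G(x,y)$ and hence ${\rm th}(\rho_{G'}(f(x),f(y))/2)=t$; thus $c_G t\le b_{G,2}(x,y)\le C_G t$ and $c_{G'}t\le b_{G',2}(f(x),f(y))\le C_{G'}t$ hold simultaneously.

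Dividing these two chains gives
\[
\frac{c_{G'}}{C_G}\,b_{G,2}(x,y)\le b_{G',2}(f(x),f(y))\le\frac{C_{G'}}{c_G}\,b_{G,2}(x,y),
\]
and it remains only to read off the constants in the four cases $G,G'\in\{\uhp^n,\B^n\}$: the upper coefficient $C_{G'}/c_G$ takes the values $\sqrt2$, $\sqrt2$, $2$, $2$ and the lower coefficient $c_{G'}/C_G$ takes the values $1/\sqrt2$, $1/2$, $1/\sqrt2$, $1/2$ for $\uhp^n\to\uhp^n$, $\uhp^n\to\B^n$, $\B^n\to\uhp^n$, $\B^n\to\B^n$ respectively, which are precisely the four displayed inequalities.

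There is no real obstacle here: the proof is essentially a single division once Corollary \ref{cor_brhoB}, Lemma \ref{lem_brhoH} and the conformal invariance of $\rho$ are in hand. The only point that needs care is bookkeeping, namely matching each constant to the correct domain in each of the four cases; and, unlike the theorems of Section \ref{s3}, these constants are not asserted to be best possible, so no extremal configuration needs to be exhibited.
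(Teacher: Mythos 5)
Your proof is correct and is exactly the paper's argument: the paper's own proof is the one-line remark that the lemma ``follows from Corollary \ref{cor_brhoB}, Lemma \ref{lem_brhoH} and the conformal invariance of the hyperbolic metric,'' and your write-up simply makes the division of the two comparison chains explicit, with all four pairs of constants read off correctly.
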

\begin{proof}
Follows from Corollary \ref{cor_brhoB}, Lemma \ref{lem_brhoH} and the conformal invariance of the hyperbolic metric.
\end{proof}

By choosing some bounds for the absolute value of the points in the unit ball, we can find a better upper and lower bounds for the distortion under conformal mappings than $1\slash2$ and $2$.

\begin{theorem}\label{thm_rpconf}
If $f:\B^n\to\B^n=f(\B^n)$ is a conformal mapping, $x,y\in\B^n$ and $r_l,r_u,R_l,R_u\in[0,1)$ such that $|x|,|y|\in[r_l,r_u]$ and $|f(x)|,|f(y)|\in[R_l,R_u]$, then
\begin{align*}
(1)\quad&
\frac{1}{1+r_u}
\leq\frac{t_{\B^n}(f(x),f(y))}{t_{\B^n}(x,y)}
\leq1+R_u,\\
(2)\quad&
\frac{2(1+R_l)}{(1+r_u)\sqrt{5+2R_l+R_l^2}}
\leq\frac{j^*_{\B^n}(f(x),f(y))}{j^*_{\B^n}(x,y)}
\leq\frac{(1+R_u)\sqrt{5+2r_l+r_l^2}}{2(1+r_l)},\\
(3)\quad&
\frac{\sqrt{1-2r_u+2r_u^2}(1+R_l)}{1+r_u^2}
\leq\frac{p_{\B^n}(f(x),f(y))}{p_{\B^n}(x,y)}
\leq\frac{1+R_u^2}{(1+r_l)\sqrt{1-2R_u+2R_u^2}},\\ 
(4)\quad&
\frac{\sqrt{1+R_l^2}}{1+r_u}
\leq\frac{b_{\B^n,2}(f(x),f(y))}{b_{\B^n,2}(x,y)}
\leq\frac{1+R_u}{\sqrt{1+r_l^2}},\\
(5)\quad&
\frac{2(1+R_l)\sqrt{1-2r_u+2r_u^2}}{(1+r_u^2)\sqrt{5+2R_l+R_l^2}}
\leq\frac{s_{\B^n}(f(x),f(y))}{s_{\B^n}(x,y)}
\leq\frac{(1+R_u^2)\sqrt{5+2r_l+r_l^2}}{2(1+r_l)\sqrt{1-2R_u+2R_u^2}},
\\
(6)\quad&
\frac{2(1+R_l)\sqrt{1-2r_u+2r_u^2}}{(1+r_u^2)\sqrt{5+2R_l+R_l^2}}
\leq\frac{w_{\B^n}(f(x),f(y))}{w_{\B^n}(x,y)}
\leq\frac{(1+R_u^2)\sqrt{5+2r_l+r_l^2}}{2(1+r_l)\sqrt{1-2R_u+2R_u^2}}.
\end{align*}
\end{theorem}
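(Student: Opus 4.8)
The plan is to derive all six estimates from one mechanism. Each of the six metrics has already been pinched between two explicit multiples of ${\rm th}(\rho_{\B^n}(\cdot,\cdot)/2)$ in Theorems \ref{thm_rtrho}, \ref{thm_rjrho}, \ref{thm_rprho}, \ref{thm_rbrho} and Corollary \ref{cor_rswrho}, with multiplicative constants depending only on a lower and an upper bound for the absolute values of the two points involved. Since the hyperbolic metric is conformally invariant, ${\rm th}(\rho_{\B^n}(x,y)/2)={\rm th}(\rho_{\B^n}(f(x),f(y))/2)$, so I would apply the relevant pinching inequality once to the pair $(x,y)$ (with radius bounds $r_l,r_u$) and once to $(f(x),f(y))$ (with radius bounds $R_l,R_u$) and divide: the common factor ${\rm th}(\rho/2)$ cancels. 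For the upper bound on the distortion ratio I take the upper bound at $(f(x),f(y))$ over the lower bound at $(x,y)$; for the lower bound on the ratio, the lower bound at $(f(x),f(y))$ over the upper bound at $(x,y)$.

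Concretely, for part $(1)$ Theorem \ref{thm_rtrho} gives $\tfrac12{\rm th}(\rho/2)\le t_{\B^n}\le\tfrac{1+r_u}{2}{\rm th}(\rho/2)$, and dividing the two instances yields $\tfrac1{1+r_u}\le t_{\B^n}(f(x),f(y))/t_{\B^n}(x,y)\le 1+R_u$. Part $(3)$ uses Theorem \ref{thm_rprho}, part $(4)$ uses Theorem \ref{thm_rbrho} (where the $\sqrt2$'s inside and outside the square roots cancel in the division), and parts $(5)$--$(6)$ use the two-sided bound of Corollary \ref{cor_rswrho}, noting that $w_{\B^n}\le s_{\B^n}$ so the same pair of constants bounds both quotients. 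The only point needing care is part $(2)$, and with it the lower constants reused in $(5)$ and $(6)$: the lower bound for $j^*_{\B^n}$ in Theorem \ref{thm_rjrho} is stated by cases. Here I would simply keep the constant $\tfrac{1+r_l}{\sqrt{5+2r_l+r_l^2}}$ throughout, which by \eqref{ine_whichjr} never exceeds $\tfrac{1+r_l^2}{2}$ and is therefore a valid (if, in one regime, non-sharp) lower bound in every case; this collapses the two cases into the single clean statement quoted.

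There is no genuine obstacle: the argument is bookkeeping. The two things to watch are, first, keeping straight which radius parameter ($r$-type or $R$-type) attaches to the numerator versus the denominator in each of the twelve inequalities, and second, the monotonicity facts already recorded in the proofs of the cited theorems — each constant is monotone in the relevant radius — so replacing an exact absolute value by the allowed extreme $r_l$, $r_u$, $R_l$ or $R_u$ only weakens the inequality in the right direction. Assembling these observations metric by metric produces $(1)$--$(6)$.
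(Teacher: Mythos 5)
Your proposal is correct and takes essentially the same route as the paper, whose proof is exactly this: apply the two-sided bounds of Theorems \ref{thm_rtrho}, \ref{thm_rjrho}, \ref{thm_rprho}, \ref{thm_rbrho} and Corollary \ref{cor_rswrho} once to $(x,y)$ with $r_l,r_u$ and once to $(f(x),f(y))$ with $R_l,R_u$, then cancel the conformally invariant factor ${\rm th}(\rho_{\B^n}(x,y)\slash2)$ by dividing. Your use of the inequality \eqref{ine_whichjr} to collapse the two-case lower bound of Theorem \ref{thm_rjrho} into the single constant $(1+r_l)\slash\sqrt{5+2r_l+r_l^2}$ is also precisely what the paper does.
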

\begin{proof}
Follows from Theorems \ref{thm_rtrho}, \ref{thm_rjrho}, \ref{bounds_rprho} and \ref{thm_rbrho}, the inequality \eqref{ine_whichjr}, Corollary \ref{cor_rswrho} and the conformal invariance of the hyperbolic metric.
\end{proof}

\begin{remark}
By choosing $r_l=R_l=0$ and $r_u=R_u\to1^-$, the inequalities (1), (3) and (4) in Theorem \ref{thm_rpconf} are the same as the ones in Lemmas \ref{lem_confG} and \ref{lem_bconf}(2). While this not true for the other three inequalities in Theorem \ref{thm_rpconf}, we could improve their results for certain choices of $r_l$ and $R_l$: Recall that the bound $(1+r_l)\slash\sqrt{5+2r_l+r_l^2}$ in Theorem \ref{thm_rjrho} can be replaced with $(1+r_l^2)\slash2$ in the case $r_l<0.295$. Thus, by taking into consideration whether $r_l,R_l$ are less than $0.295$ or not, the other inequalities would also give sharp results in the case $r_l=R_l=0$ and $r_u=R_u\to1^-$. 
\end{remark}

Next, consider the sense-preserving M\"obius transformation
\begin{align}\label{exp_Ta}
T_a:\B^2\to\B^2,\quad T_a(z)=\frac{z-a}{1-\overline{a}z},    
\end{align}
where $a$ is some fixed point from the unit disk. This hyperbolic isometry has several useful properties that can be found in \cite[Ch. 3.2, pp. 35-38]{hkvbook}. Interestingly, it seems to be fulfilling the inequality
\begin{align}\label{ine_Ta}
d_{\B^2}(T_a(x),T_a(y))\leq(1+|a|)d_{\B^2}(x,y),\quad x,y\in\B^2
\end{align}
for several intrinsic metrics, including the triangular ratio metric \cite[Conj. 1.6, p. 684]{chkv}, the Barrlund metric with $p=2$ \cite[Conj. 4.3, p. 25]{fmv} and the $t$-metric \cite[Conj. 4.13, p. 13]{inm}. Numerical tests suggest that this inequality would also hold for the $j^*$-metric, the point pair function and the quasi-metric $w$. Furthermore, the inequality \eqref{ine_Ta} has the best possible constant as can be see in the next lemma.

\begin{lemma}\label{lem_Tasup}
For all $a\in\B^2$ and $d_G\in\{t_G,j^*_G,w_G,s_G,p_G,b_{G,2}\}$,
\begin{align*}
\sup_{x,y\in\B^2}\frac{d_{\B^2}(T_a(x),T_a(y))}{d_{\B^2}(x,y)}\geq1+|a|.\end{align*}
\end{lemma}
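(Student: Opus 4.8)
The plan is to exhibit, for each metric $d_G$ in the list, a concrete one-parameter family of point pairs along which the distortion ratio $d_{\B^2}(T_a(x),T_a(y))/d_{\B^2}(x,y)$ tends to $1+|a|$; taking the supremum then gives the claimed lower bound. By composing $T_a$ with a rotation (which is a hyperbolic isometry and preserves all of the Euclidean-defined metrics under consideration, since they depend only on the configuration of $x,y$ relative to $\partial\B^2$), we may assume $a\in(-1,0]$, so that $a=-|a|$ and $T_a(z)=(z+|a|)/(1+|a|z)$ fixes the diameter $(-1,1)$. The natural test pairs are points on that diameter clustering near the boundary point $1$: take $x=1-s$ and $y=1-ts$ for fixed $t>1$ and let $s\to0^+$.

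The key computation is then purely one-dimensional. For $x,y\in(-1,1)$ with $0<x<y<1$ one has $d_{\B^2}(x)=1-y$ and $d_{\B^2}(x)=\dots$ the relevant boundary distances are $1-x$ and $1-y$, and each of the six metrics has an explicit elementary formula in terms of $|x-y|$, $1-|x|$, $1-|y|$ (for $b_{\B^2,2}$ use the formula $b_{\B^2,2}(x,y)=|x-y|/\sqrt{2+|x|^2+|y|^2-2|x+y|}$ recorded in the preliminaries, which on the diameter near $1$ behaves like the point pair function up to the constant $\sqrt 2$). Under $T_a$ the images $T_a(x),T_a(y)$ again lie on the diameter, with
\[
1-T_a(x)=\frac{(1+|a|)(1-x)}{1+|a|x},\qquad T_a(y)-T_a(x)=\frac{(1-|a|^2)(y-x)}{(1+|a|x)(1+|a|y)}.
\]
Substituting $x=1-s$, $y=1-ts$ and letting $s\to0^+$, every denominator factor $1+|a|x,\ 1+|a|y\to 1+|a|$, so $1-T_a(x)\sim (1-x)$, $1-T_a(y)\sim(1-y)$ and $T_a(y)-T_a(x)\sim (1+|a|)(y-x)$; i.e. in the limit the boundary distances are multiplied by $1$ while the mutual distance is multiplied by $1+|a|$. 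Plugging these asymptotics into each metric's formula, the ratio $d_{\B^2}(T_a(x),T_a(y))/d_{\B^2}(x,y)$ tends to $1+|a|$ for every $d_G$ in the list (for $j^*$, $s$, $p$, $t$, $w$ this is immediate from homogeneity of the formulas in the limiting regime; for $b_{\B^2,2}$ one checks the analogous limit from its explicit formula). This yields the supremum bound $\ge 1+|a|$.

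The main obstacle is bookkeeping rather than conceptual: one must make sure the chosen test family is legal for each metric (in particular $w_{\B^2}$ is only defined on convex domains, which $\B^2$ is, and its formula $w_{\B^n}(x,y)=|x-y|/|y-\widetilde x|$ with $\widetilde x=x(2-|x|)/|x|$ applies once we order the points by absolute value — on the diameter near $1$ one verifies $|y-\widetilde x|\sim 2(1-|x|)+|x-y|$, so $w$ behaves like $s$ in the limit), and that the limit $s\to0^+$ genuinely realizes the factor $1+|a|$ and not something smaller. A clean way to package all six cases at once is to note that each of these metrics $d$ satisfies, for collinear points approaching the boundary along a radius, an asymptotic of the form $d_{\B^2}(x,y)\sim F\bigl(|x-y|/d_{\B^2}(x),\,|x-y|/d_{\B^2}(y)\bigr)$ for a fixed homogeneous-of-degree-one-like profile $F$; since $T_a$ scales the two ratios $|x-y|/d_{\B^2}(x)$ and $|x-y|/d_{\B^2}(y)$ each by $(1+|a|)(1+o(1))$ in this regime, the ratio of $d$-values tends to $1+|a|$. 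Carrying this out case by case with the explicit formulas completes the proof.
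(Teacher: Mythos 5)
There is a genuine error here: the test family you chose is the wrong one, and the computation that makes it appear to work contains a sign mistake. With $a=-|a|$ one has $T_a(z)=(z+|a|)\slash(1+|a|z)$ and hence $1-T_a(x)=(1-|a|)(1-x)\slash(1+|a|x)$, \emph{not} $(1+|a|)(1-x)\slash(1+|a|x)$. Using this together with your (correct) expression $T_a(y)-T_a(x)=(1-|a|^2)(y-x)\slash((1+|a|x)(1+|a|y))$, the limit $x,y\to1$ along the fixed diameter gives $1-T_a(x)\sim\tfrac{1-|a|}{1+|a|}(1-x)$, $1-T_a(y)\sim\tfrac{1-|a|}{1+|a|}(1-y)$ and $T_a(y)-T_a(x)\sim\tfrac{1-|a|}{1+|a|}(y-x)$: the mutual distance and both boundary distances get multiplied by the \emph{same} factor $|T_a'(1)|=\tfrac{1-|a|}{1+|a|}$. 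This is unavoidable: near a fixed boundary point $T_a$ is asymptotically a similarity, and all six functions $t,j^*,w,s,p,b_{\cdot,2}$ are similarity-invariant, so along your family the distortion ratio tends to $1$, not $1+|a|$. (Note also that your own displayed formulas are inconsistent with the claimed asymptotics: they yield $T_a(y)-T_a(x)\sim\tfrac{1-|a|}{1+|a|}(y-x)$, not $(1+|a|)(y-x)$.)

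The extremal pairs sit at the centre of the disk, not at the boundary. The paper's proof takes $x=ke^{i\mu}$, $y=-ke^{i\mu}$ with $\mu=\arg a$ and lets $k\to0^+$, so that $x,y\to0$ while $T_a(x),T_a(y)\to T_a(0)=-a$. Each of the six functions satisfies $d_{\B^2}(x,y)\sim c\,|x-y|\slash(1-|x|)$ as the two arguments collapse to a common point (with $c=1\slash2$ for $t,j^*,w,s,p$ and $c=1\slash\sqrt2$ for $b_{\B^2,2}$, as one reads off the explicit formulas), so the ratio tends to $|T_a'(0)|\cdot(1-|0|)\slash(1-|a|)=(1-|a|^2)\slash(1-|a|)=1+|a|$: the map shrinks infinitesimal lengths at the origin by $1-|a|^2$ but pushes the points to depth only $1-|a|$ from the boundary, and it is the mismatch between these two factors that produces $1+|a|$. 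Your rotation reduction to $a\in(-1,0]$ is fine and can be kept; the fix is to replace the boundary-clustering pairs by pairs collapsing to the origin along the direction of $a$.
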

\begin{proof}
By fixing $\mu=\arg(a)$ and $0<k<1$, we can calculate that
\begin{align*}
\lim_{k\to0^+}\frac{d_{\B^2}(T_a(ke^{\mu i}),T_a(-ke^{\mu i}))}{d_{\B^2}(ke^{\mu i},-ke^{\mu i})}=1+|a|   
\end{align*}
for all $d_G\in\{t_G,j^*_G,w_G,s_G,p_G,b_{G,2}\}$, from which the result follows.
\end{proof}

According to several numerical tests, the equality holds in Lemma \ref{lem_Tasup} for all six intrinsic metrics and quasi-metrics considered and, trivially, the inequality \ref{ine_Ta} would follow from this. 

\begin{corollary}\label{cor_mobhforw}
For any M\"obius transformation $h:\B^n\to\B^n$, all $x,y\in\B^n$ and all $d_G\in\{j^*_G,w_G,s_G,p_G\}$,
\begin{align*}
d_{\B^n}(h(x),h(y))\leq\frac{2d_{\B^n}(x,y)}{1+d_{\B^n}(x,y)^2}\leq2d_{\B^n}(x,y),    
\end{align*}
where the constant 2 in the second inequality is sharp.
\end{corollary}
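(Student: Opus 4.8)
The plan is to reduce everything to the conformal invariance of the hyperbolic metric together with the two-sided comparison in Lemma~\ref{lem_jwsprho}(2). Since a M\"obius transformation $h:\B^n\to\B^n$ is a hyperbolic isometry, $\rho_{\B^n}(h(x),h(y))=\rho_{\B^n}(x,y)$ for all $x,y\in\B^n$. Hence, for $d_G\in\{j^*_G,w_G,s_G,p_G\}$, the right-hand inequality of Lemma~\ref{lem_jwsprho}(2) gives $d_{\B^n}(h(x),h(y))\leq{\rm th}(\rho_{\B^n}(x,y)/2)$, while the left-hand inequality of the same lemma gives ${\rm th}(\rho_{\B^n}(x,y)/4)\leq d_{\B^n}(x,y)$. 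So it remains only to bound ${\rm th}(\rho/2)$ from above by $2d/(1+d^2)$ knowing that ${\rm th}(\rho/4)\leq d\leq 1$, where I abbreviate $\rho=\rho_{\B^n}(x,y)$ and $d=d_{\B^n}(x,y)$.

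For this last step I would set $t=\rho/4$ and invoke the double-angle identity from Remark~\ref{rmk_th}, namely ${\rm th}(2t)=2\,{\rm th}(t)/(1+{\rm th}^2(t))$. The only elementary observation needed is that $g(\tau)=2\tau/(1+\tau^2)$ is increasing on $[0,1]$ (its derivative is $2(1-\tau^2)/(1+\tau^2)^2\geq 0$), so from $0\leq{\rm th}(t)\leq d\leq 1$ one gets ${\rm th}(\rho/2)={\rm th}(2t)=g({\rm th}(t))\leq g(d)=2d/(1+d^2)$. Chaining this with the previous paragraph yields $d_{\B^n}(h(x),h(y))\leq 2d/(1+d^2)$, and the trailing inequality $2d/(1+d^2)\leq 2d$ is immediate since $1+d^2\geq 1$.

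For the sharpness of the constant $2$ I would invoke Lemma~\ref{lem_Tasup}: for the M\"obius transformation $T_a$ of \eqref{exp_Ta} one has $\sup_{x,y\in\B^2}d_{\B^2}(T_a(x),T_a(y))/d_{\B^2}(x,y)\geq 1+|a|$ for each of the four metrics in question, and letting $|a|\to 1^-$ shows that no constant smaller than $2$ can work in an inequality of the form $d_{\B^n}(h(x),h(y))\leq C\,d_{\B^n}(x,y)$. For $n\geq 3$ this example is placed on a two-dimensional plane through the origin, on which each $d_{\B^n}$ reduces to $d_{\B^2}$ by Remark~\ref{rmk_nwlg}.

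I do not expect a genuine obstacle here: the only non-formal ingredient is the monotonicity of $\tau\mapsto 2\tau/(1+\tau^2)$ on $[0,1]$, and one must be mildly careful that $d_{\B^n}(x,y)\leq 1$ is automatic for these four metrics (their ranges lie in $[0,1]$), so that $g$ is being evaluated where it is monotone. Everything else is a direct concatenation of results already established in the excerpt.
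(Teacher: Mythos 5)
Your proposal is correct and follows exactly the paper's route: the paper's proof is the one-line citation of Lemma~\ref{lem_jwsprho}(2), Remark~\ref{rmk_th} and the conformal invariance of the hyperbolic metric for the inequality, plus Lemma~\ref{lem_Tasup} with $a\to1^-$ for sharpness, and you have simply spelled out the details (in particular the monotonicity of $\tau\mapsto2\tau/(1+\tau^2)$ on $[0,1]$, which is indeed the implicit step needed to apply Remark~\ref{rmk_th}).
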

\begin{proof}
The inequality follows from Lemma \ref{lem_jwsprho}(2), Remark \ref{rmk_th} and the conformal invariance of the hyperbolic metric, and the sharpness from Lemma \ref{lem_Tasup} by letting $a\to1^-$.
\end{proof}

\begin{lemma}\label{lem_BunderTa}
For all $0<r<1$ and $a\in\B^2$, 
\begin{align*}
B^2(0,R_l)\subset T_a(B^2(0,r))\subset B^2(0,R_u),    
\end{align*}
if and only if
\begin{align*}
R_l\leq\frac{||a|-r|}{1-|a|r}\quad\text{and}\quad
R_u\geq\frac{|a|+r}{1+|a|r}.
\end{align*}
\end{lemma}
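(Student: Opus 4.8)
The statement is purely about the image of a Euclidean disk $B^2(0,r)$ under the hyperbolic isometry $T_a$, so the plan is to identify $T_a(B^2(0,r))$ explicitly. Since $T_a$ is a M\"obius transformation, it maps circles to circles, so $T_a(S^1(0,r))$ is a Euclidean circle; hence $T_a(B^2(0,r))$ is a Euclidean disk $B^2(c,\rho)$ for some center $c$ and radius $\rho$ (it is a disk and not an exterior of a disk because $0<r<1$ keeps $S^1(0,r)$ inside $\B^2$ where $T_a$ is a homeomorphism onto $\B^2$). The containments $B^2(0,R_l)\subset B^2(c,\rho)\subset B^2(0,R_u)$ are then equivalent to the elementary conditions $R_l\le\rho-|c|$ and $R_u\ge\rho+|c|$, so the whole problem reduces to computing the two extreme distances from the origin to $\partial T_a(B^2(0,r))$, namely $\rho-|c|$ and $\rho+|c|$.

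\medskip
First I would compute these two extremal values directly. By rotational symmetry, replacing $a$ by $|a|e^{i\mu}$ amounts to pre- and post-composing $T_a$ with rotations, which do not change the modulus of any image point; so without loss of generality I may take $a=|a|\in(0,1)$ real and nonnegative. Then $T_a$ maps the real diameter $[-r,r]$ of $B^2(0,r)$ onto a real diameter of the image disk, namely the segment with endpoints $T_a(r)=\dfrac{r-a}{1-ar}$ and $T_a(-r)=\dfrac{-r-a}{1+ar}$. The point of $\partial T_a(B^2(0,r))$ nearest to the origin has modulus $\min\{|T_a(r)|,|T_a(-r)|\}$ and the farthest has modulus $\max\{|T_a(r)|,|T_a(-r)|\}$, because these are the endpoints of the diameter of the image disk lying on the line $L(0,c)$ through its center. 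Now $|T_a(-r)|=\dfrac{r+a}{1+ar}$, which is exactly the claimed lower bound for $R_u$, and $|T_a(r)|=\dfrac{|r-a|}{1-ar}=\dfrac{||a|-r|}{1-|a|r}$, which is exactly the claimed upper bound for $R_l$. To see that $|T_a(-r)|\ge|T_a(r)|$ (so that these are indeed the far and near values respectively), note $\dfrac{r+a}{1+ar}-\dfrac{|r-a|}{1-ar}\ge0$ reduces, after clearing the positive denominators, to a manifestly nonnegative expression.

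\medskip
Putting this together: $T_a(B^2(0,r))$ is the Euclidean disk whose boundary meets the ray from $0$ through its center at the two points of modulus $\dfrac{||a|-r|}{1-|a|r}$ and $\dfrac{|a|+r}{1+|a|r}$. Hence $B^2(0,R_l)\subset T_a(B^2(0,r))$ holds precisely when $R_l$ does not exceed the smaller of these, i.e. $R_l\le\dfrac{||a|-r|}{1-|a|r}$, and $T_a(B^2(0,r))\subset B^2(0,R_u)$ holds precisely when $R_u$ is at least the larger one, i.e. $R_u\ge\dfrac{|a|+r}{1+|a|r}$; moreover, since $0$ lies inside the image disk (as $T_a(0)=-a$ and $|a|<1$, while $T_a$ maps $\B^2$ onto $\B^2$ and $0\in B^2(0,r)$), these two one-sided conditions are independent and together are equivalent to the pair of containments. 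The only mild subtlety, and the step I would be most careful about, is the geometric claim that the nearest and farthest points of $\partial T_a(B^2(0,r))$ to the origin are the two endpoints of the image of the real diameter $[-r,r]$; this is clear once one knows $T_a$ maps the real axis to itself and the image is a round disk symmetric about the real axis, so its center $c$ is real and its two real-axis boundary points $c\pm\rho$ are exactly $T_a(\pm r)$, and the extreme distances from $0$ to a circle centered on the real axis are attained on the real axis. Everything else is the routine algebra of simplifying $|T_a(\pm r)|$.
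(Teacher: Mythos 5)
Your overall strategy matches the paper's: both proofs reduce the lemma to finding the nearest and farthest points of $T_a(S^1(0,r))$ from the origin and identifying those distances with $||a|-r|\slash(1-|a|r)$ and $(|a|+r)\slash(1+|a|r)$. The route differs mildly: the paper works directly with the identity $|T_a(y)|=|a-y|\slash\big(|a|\,|y-a\slash|a|^2|\big)$ and the law of cosines, showing $|T_a(y)|$ is monotone in the angle between $a$ and $y$ so that the extremes over $y\in S^1(0,r)$ occur at the aligned and anti-aligned positions; you instead invoke the circle-to-circle property of M\"obius maps, reduce to real $a\geq 0$ by rotation, and evaluate at $\pm r$. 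Both computations are correct, and your verification that $(|a|+r)\slash(1+|a|r)\geq||a|-r|\slash(1-|a|r)$ is sound.

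There is, however, one genuinely wrong step: you justify "$0$ lies inside the image disk" by "$T_a(0)=-a$ and $|a|<1$". That shows $-a\in T_a(B^2(0,r))$, not $0\in T_a(B^2(0,r))$; the correct criterion is $0\in T_a(B^2(0,r))\Leftrightarrow T_a^{-1}(0)=a\in B^2(0,r)\Leftrightarrow|a|<r$. When $|a|\geq r$ the origin is not interior to the image disk, so no ball $B^2(0,R_l)$ with $R_l>0$ is contained in $T_a(B^2(0,r))$, and the implication from $R_l\leq||a|-r|\slash(1-|a|r)$ to the first containment breaks down (read literally, the "if" direction of the equivalence fails in that regime). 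To be fair, the paper's proof carries exactly the same silent assumption: it computes $\inf$ and $\sup$ of $|T_a(y)|$ over $S^1(0,r)$ and concludes "from which our result follows" without checking that $0$ is interior to the image. So your argument is no weaker than the printed one, but the specific justification you offer for the interiority of $0$ is incorrect and should be replaced by the hypothesis $|a|<r$ (or the inner inclusion should be restated, e.g., in terms of the distance from the origin to the complement of the image disk).
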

\begin{proof}
From the expression \eqref{exp_Ta}, we see that $T_0$ is clearly identity transformation, so the result holds trivially if $a=0$. Suppose then that $a\in\B^2\backslash\{0\}$. By \cite[Ex. 3.21(1), p. 37]{hkvbook},
\begin{align*}
|T_a(y)|=\frac{|a-y|}{|a||y-a\slash|a|^2|}=\frac{|a-y|}{||a|y-a\slash|a||}    
\end{align*}
for all $y\in\B^2$. If $\mu=\arg(a)-\arg(y)$, by the law of cosines,
\begin{align*}
|T_a(y)|=\sqrt{\frac{|a|^2+|y|^2-2|a||y|\cos(\mu)}{1+|a|^2|y|^2-2|a||y|\cos(\mu)}},    
\end{align*}
and, since $|a|^2+|y|^2<1+|a|^2|y|^2$, the expression above is decreasing with respect to $\cos(\mu)\in[-1,1]$. Consequently, 
\begin{align*}
\inf_{y\in S^1(0,r)}|T_a(y)|=\frac{|a|^2+r^2-2|a|r}{1+|a|^2r^2-2|a|r}
=\frac{||a|-r|}{1-|a|r},\quad
\sup_{y\in S^1(0,r)}|T_a(y)|=\frac{|a|+r}{1+|a|r},
\end{align*}
from which our result follows.
\end{proof}

\begin{corollary}\label{cor_RlRuforT}
The M\"obius transformation $f=T_a$ defined for any $a\in\B^2$ fulfills the inequalities of Theorem \ref{thm_rpconf} for all $x,y\in\B^2$ such that $0\leq r_l\leq|x|\leq|y|\leq r_u<1$ with $R_l=||a|-r_l|\slash(1-|a|r_l)$ and $R_u=(|a|+r_u)\slash(1+|a|r_u)$.
\end{corollary}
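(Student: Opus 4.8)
The plan is to combine Theorem \ref{thm_rpconf} with Lemma \ref{lem_BunderTa} in an almost mechanical way. Theorem \ref{thm_rpconf} says that the distortion quotients of the six intrinsic metrics under a conformal self-map $f$ of $\B^n$ are controlled once we know bounds $[r_l,r_u]$ for $|x|,|y|$ and bounds $[R_l,R_u]$ for $|f(x)|,|f(y)|$. So the only thing left to do is to verify that, when $f=T_a$ and $0\le r_l\le|x|\le|y|\le r_u<1$, the images $T_a(x),T_a(y)$ have absolute values lying in the interval $[R_l,R_u]$ with the claimed $R_l=||a|-r_l|\slash(1-|a|r_l)$ and $R_u=(|a|+r_u)\slash(1+|a|r_u)$.

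First I would note that $x,y\in\overline B^2(0,r_u)$, hence $T_a(x),T_a(y)\in T_a(\overline B^2(0,r_u))$, and apply Lemma \ref{lem_BunderTa} with $r=r_u$: since $R_u=(|a|+r_u)\slash(1+|a|r_u)\ge(|a|+r_u)\slash(1+|a|r_u)$ (the condition of that lemma, with equality), we get $T_a(\overline B^2(0,r_u))\subset \overline B^2(0,R_u)$, so $|T_a(x)|,|T_a(y)|\le R_u$. (One should be slightly careful that Lemma \ref{lem_BunderTa} is stated for open balls and strict inclusions; a short remark that the same monotonicity of $|T_a(y)|$ on $S^1(0,r)$ used in its proof gives $\sup_{|y|\le r}|T_a(y)| = (|a|+r)\slash(1+|a|r)$, attained, handles the closed-ball version.) Similarly, $x,y$ lie outside the open ball $B^2(0,r_l)$; applying the first inclusion of Lemma \ref{lem_BunderTa} with $r=r_l$ and the choice $R_l=||a|-r_l|\slash(1-|a|r_l)$, which meets that lemma's condition with equality, gives $B^2(0,R_l)\subset T_a(B^2(0,r_l))$. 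Taking complements inside $\B^2$ yields $T_a(\B^2\setminus B^2(0,r_l))\subset \B^2\setminus B^2(0,R_l)$, i.e. $|T_a(x)|,|T_a(y)|\ge R_l$, because $T_a$ is a bijection of $\B^2$.

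With $|T_a(x)|,|T_a(y)|\in[R_l,R_u]$ established, I would simply invoke Theorem \ref{thm_rpconf} (and Remark \ref{rmk_nwlg} to reduce to $n=2$, since $T_a$ is defined on $\B^2$) to conclude that all six inequalities (1)--(6) hold for $f=T_a$ with these values of $R_l,R_u$. That is the entire argument; there is essentially no hard step, as the corollary is a packaging of two already-proved results. The only mild subtlety, and the place I would be most careful, is matching the inequality directions in Lemma \ref{lem_BunderTa}: that lemma gives the inclusions $B^2(0,R_l)\subset T_a(B^2(0,r))\subset B^2(0,R_u)$ precisely when $R_l$ is at most and $R_u$ at least the displayed extremal radii, so choosing $R_l,R_u$ equal to those extremal radii is the sharpest admissible choice and is exactly what makes the resulting bounds in Theorem \ref{thm_rpconf} as strong as possible for $T_a$.

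Concretely, the proof would read: ``By Remark \ref{rmk_nwlg} it suffices to consider $n=2$. Since $0\le r_l\le|x|\le|y|\le r_u<1$, Lemma \ref{lem_BunderTa} (applied with $r=r_u$, resp.\ $r=r_l$, and the stated $R_u$, resp.\ $R_l$, which satisfy its hypotheses with equality) together with the bijectivity of $T_a$ on $\B^2$ gives $R_l\le|T_a(x)|,|T_a(y)|\le R_u$. The claim now follows from Theorem \ref{thm_rpconf} applied to $f=T_a$.'' I expect no genuine obstacle; the write-up is short.
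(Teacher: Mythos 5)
Your reduction is exactly the paper's: the published proof is the single line ``Follows from Lemma \ref{lem_BunderTa}'', and you have merely spelled out the two applications of that lemma ($r=r_u$ for the upper bound, $r=r_l$ for the lower bound) together with the complementation step. The upper-bound half is correct, since $(|a|+r)/(1+|a|r)$ is increasing in $r$, so $|x|\le r_u$ does give $|T_a(x)|\le R_u$.

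The lower-bound half contains a genuine gap --- one inherited from the paper, but which your more explicit write-up exposes. The inclusion $B^2(0,R_l)\subset T_a(B^2(0,r_l))$ that you import from Lemma \ref{lem_BunderTa} is false whenever $0<r_l<|a|$ and $R_l>0$: since $T_a(a)=0$ and $|a|>r_l$, the origin is not in $T_a(B^2(0,r_l))$, so that set contains no ball centred at $0$. (The lemma's proof only computes the infimum and supremum of $|T_a(y)|$ on the circle $S^1(0,r)$, which places the image circle in an annulus; it does not justify the stated ball inclusion.) Your complementation step therefore rests on a false premise, and the conclusion it is meant to deliver is itself false: the function $r\mapsto||a|-r|/(1-|a|r)$ is not monotone in $r$ (it decreases to $0$ at $r=|a|$ and then increases), so $\min\{|T_a(x)|:r_l\le|x|\le r_u\}$ is attained at $|x|=r_u$ when $r_u\le|a|$ and equals $0$ when $r_l<|a|<r_u$ --- not at $|x|=r_l$. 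Concretely, for $a=1/2$, $r_l=1/5$, $r_u=2/5$, $x=1/5$, $y=2/5$ one gets $R_l=1/3$ but $|T_a(y)|=1/8<R_l$, so the hypothesis $|f(x)|,|f(y)|\in[R_l,R_u]$ of Theorem \ref{thm_rpconf} is not met. The statement (and your proof) can be repaired by taking $R_l=\min\{||a|-r_l|/(1-|a|r_l),\,||a|-r_u|/(1-|a|r_u)\}$ when $|a|\notin(r_l,r_u)$ and $R_l=0$ otherwise; with that choice the extremal computation on each circle $S^1(0,s)$, $s\in[r_l,r_u]$, yields $|T_a(x)|\ge R_l$ directly, and no complementation argument is needed.
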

\begin{proof}
Follows from Lemma \ref{lem_BunderTa}. 
\end{proof}

Next, we will yet study the distortion of the triangular ratio metric defined on the unit disk under conformal mappings but, to able to use study these results, we need to first introduce an expression for the hyperbolic midpoint of two points in the unit disk.

\begin{theorem}\cite[Thm 1.4, p. 3]{wvz}
For all $x,y\in\B^2$, the hyperbolic midpoint $q$ of $x$ and $y$ with $\rho_{\B^2}(x,q)=\rho_{\B^2}(q,y)=\rho_{\B^2}(x,y)\slash2$ is given by
\begin{align*}
q=\frac{y(1-|x|^2)+x(1-|y|^2)}{1-|x|^2|y|^2+A[x,y]\sqrt{(1-|x|^2)(1-|y|^2)}},   
\end{align*}
where $A[x,y]=\sqrt{|x-y|^2+(1-|x|^2)(1-|y|^2)}$.
\end{theorem}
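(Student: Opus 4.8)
The plan is to reduce to the case in which one of the two points is the origin, where the midpoint is immediate, and then transport the answer back by a M\"obius automorphism of the disk. Concretely, I would invoke the conformal invariance of $\rho_{\B^2}$ together with the fact that a M\"obius automorphism of $\B^2$ maps hyperbolic geodesics to hyperbolic geodesics: this gives $q=T_{-x}(\widetilde q)$, where $T_a$ is the map \eqref{exp_Ta}, $T_{-x}=T_x^{-1}$, and $\widetilde q$ is the hyperbolic midpoint of $0$ and $\widetilde y:=T_x(y)$. The hyperbolic geodesic through $0$ and $\widetilde y$ is the Euclidean segment $[0,\widetilde y]$, and since ${\rm th}(\rho_{\B^2}(0,w)/2)=|w|$ for every $w\in\B^2$, the half-angle identity ${\rm th}(u/2)={\rm sh}\,u/({\rm ch}\,u+1)$ yields $\widetilde q=\widetilde y/(1+\sqrt{1-|\widetilde y|^2})$. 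Substituting $\widetilde y=(y-x)/(1-\overline{x}y)$, using $|1-\overline{x}y|^2=|x-y|^2+(1-|x|^2)(1-|y|^2)=A[x,y]^2$ to rewrite $|\widetilde y|=|x-y|/A[x,y]$ and $\sqrt{1-|\widetilde y|^2}=\sqrt{(1-|x|^2)(1-|y|^2)}/A[x,y]$, and then forming $q=(\widetilde q+x)/(1+\overline{x}\widetilde q)$ and clearing the nested fractions, one reaches
\[
q=\frac{A(1-|x|^2)\,y+B\,(x-|x|^2y)}{A(1-|x|^2)+B\,(1-\overline{x}y)},
\qquad A:=A[x,y],\quad B:=\sqrt{(1-|x|^2)(1-|y|^2)}.
\]

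It then remains to check that this agrees with the asserted closed form, i.e. to verify, after clearing denominators, the identity $\bigl(A(1-|x|^2)y+B(x-|x|^2y)\bigr)\bigl(1-|x|^2|y|^2+AB\bigr)=\bigl(y(1-|x|^2)+x(1-|y|^2)\bigr)\bigl(A(1-|x|^2)+B(1-\overline{x}y)\bigr)$. Writing $p=1-|x|^2$ and $r=1-|y|^2$ and splitting each side according to its degree in $A$ (with $A^2=|1-\overline{x}y|^2=|x-y|^2+pr$ and $B^2=pr$), the parts linear in $A$ coincide by the elementary identity $p^2y+prx=p(py+rx)$, while the $A$-free parts, after cancelling the common factor $B$ and using $|1-\overline{x}y|^2=|x-y|^2+pr$ together with $r+p|y|^2=1-|x|^2|y|^2$, collapse to the triviality $py+x-y=(1-\overline{x}y)x$.

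I expect this last reconciliation to be the only genuine obstacle, precisely because the reduced expression for $q$ carries a conjugate $\overline{x}$ that must disappear against the conjugate-free target formula; the identities $(1-\overline{x}y)x=x-|x|^2y$ and $A[x,y]^2=|1-\overline{x}y|^2$ are exactly what make the cancellation go through, and keeping track of the $A$-linear versus $A$-free bookkeeping is where care is needed. As an alternative route avoiding the Möbius reduction, one could verify the statement directly: compute ${\rm sh}^2(\rho_{\B^2}(x,q)/2)=|x-q|^2/\bigl((1-|x|^2)(1-|q|^2)\bigr)$ from the asserted $q$ and check that it equals ${\rm sh}^2(\rho_{\B^2}(x,y)/4)=(A[x,y]-B)/(2B)$; the symmetry of the formula under $x\leftrightarrow y$ then gives the same value for $\rho_{\B^2}(q,y)$, and since both hyperbolic distances equal exactly $\tfrac12\rho_{\B^2}(x,y)$, the triangle inequality $\rho_{\B^2}(x,y)\le\rho_{\B^2}(x,q)+\rho_{\B^2}(q,y)$ is an equality, so $q$ lies on the unique geodesic joining $x$ and $y$ and is therefore its hyperbolic midpoint.
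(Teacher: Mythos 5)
This statement is not proved in the paper at all: it is imported verbatim from \cite[Thm 1.4, p. 3]{wvz} as a citation, so there is no internal argument to compare yours against. Your proof is correct and self-contained, and it is worth recording. The reduction via $T_x$ is legitimate because $T_x$ is a hyperbolic isometry of $\B^2$ and the hyperbolic midpoint is unique (any point realizing $\rho(x,q)=\rho(q,y)=\rho(x,y)/2$ forces equality in the triangle inequality and hence lies on the unique geodesic). I checked the computational core: with $p=1-|x|^2$, $r=1-|y|^2$, $B=\sqrt{pr}$ one indeed gets $\widetilde q=A\widetilde y/(A+B)$, then
\begin{align*}
q=T_{-x}(\widetilde q)=\frac{Apy+B(x-|x|^2y)}{Ap+B(1-\overline{x}y)},
\end{align*}
and in the cross-multiplied identity the $A$-linear part reduces to $|y|^2+r=1$ while the $A$-free part factors through $(1-\overline{x}y)$ using $A^2=|1-\overline{x}y|^2$ and $x(1-\overline{x}y)=x-|x|^2y$, so the conjugates cancel exactly as you predicted and the quotient collapses to the asserted conjugate-free formula. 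Your alternative route is also sound, since ${\rm ch}(\rho_{\B^2}(x,y)/2)=A/B$ gives ${\rm sh}^2(\rho_{\B^2}(x,y)/4)=(A-B)/(2B)$ as claimed. The only presentational caveat is that the two displayed identities in your middle paragraph are stated so tersely that a reader cannot see they encode the full cancellation; writing out the two residual scalar identities ($|y|^2+r=1$ and $x-|x|^2y=x(1-\overline{x}y)$) would make the verification transparent.
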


According to the results of the article \cite{sinb}, the triangular ratio distance for any two points $x,y\in\B^2$ can be estimated by rotating these points $x,y$ on the hyperbolic circle whose center is the hyperbolic midpoint $q$ of $x,y$. Namely, the value of triangular ratio metric between the points $x,y$ is at lowest when $x,y$ are rotated so that $|x|=|y|$ and at highest when the rotation is done so that $x,y$ are collinear with the origin. It follows from this that we have bounds for the triangular ratio distance $s_{\B^2}(x,y)$ that can be expressed only in terms of the absolute value of the hyperbolic midpoint of $x,y$ and the radius of the hyperbolic circle on which $x,y$ are rotated, and this result can be extended also to the general case where $n\geq2$.

\begin{theorem}\label{thm_hypmidrot}
Choose any distinct points $x,y\in\B^n$. Consider the hyperbolic sphere $S_\rho(q,t)$, whose center $q$ is the hyperbolic midpoint of $x$ and $y$, and radius $t$ is the distance ${\rm th}(\rho_{\B^n}(x,y)\slash4)$. Trivially, $x,y\in S_\rho(q,t)$ and the triangular ratio distance between these points fulfills the inequality 
\begin{align*}
\sqrt{\frac{|q|^2+t^2}{1+|q|^2t^2}}
&\leq s_{\B^n}(x,y)
\leq\frac{(1+|q|)t}{1+|q|t^2},
\quad\text{if}\quad|q|<t^2\\
\frac{t(1+|q|)}{\sqrt{(1+t^2)(1+|q|^2t^2)}}
&\leq s_{\B^n}(x,y)
\leq\frac{(1+|q|)t}{1+|q|t^2},
\quad\text{otherwise}.
\end{align*}
Furthermore, these are the best bounds for $s_{\B^n}(x,y)$ possible expressed only in terms $q$ and $t$.
\end{theorem}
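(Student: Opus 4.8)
The plan is to pass to the plane, invoke the hyperbolic-midpoint-rotation principle of \cite{sinb} to reduce to two distinguished positions of the pair, and compute $s_{\B^2}$ explicitly in each. By Remark \ref{rmk_nwlg} we may take $n=2$, and after a rotation about the origin -- a hyperbolic isometry of $\B^2$ fixing $0$ which leaves $|x|$, $|x-y|$, $|x+y|$, the boundary distances and hence $s_{\B^2}$ unchanged -- we may assume $q=|q|\ge0$ is real. Fixing $(q,t)$ is the same as fixing the hyperbolic midpoint $q$ and the number $\rho:=\rho_{\B^2}(x,y)$ (for which ${\rm th}(\rho/4)=t$), i.e.\ letting $\{x,y\}$ run over the orbit of hyperbolic rotations about $q$ inside the hyperbolic sphere $S_\rho(q,t)$; by the results of \cite{sinb} quoted above, along this orbit $s_{\B^2}(x,y)$ is largest in the position with $x,y,0$ collinear and smallest in the position with $|x|=|y|$, so it suffices to evaluate $s_{\B^2}$ there. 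To identify these positions I would use $T_q(z)=(z-q)/(1-qz)$ (a hyperbolic isometry with $T_q(q)=0$ and, since $q$ is real, $T_q^{-1}=T_{-q}$): it maps $S_\rho(q,t)$ onto the Euclidean circle $S^1(0,t)$, and since the hyperbolic geodesic joining the two image points passes through their midpoint $0$, that geodesic is a Euclidean diameter, so the images are antipodal on $S^1(0,t)$. Thus $\{x,y\}=\{T_{-q}(te^{i\beta}),\,T_{-q}(-te^{i\beta})\}$ for a rotation parameter $\beta$, and the formula for $|T_a(\cdot)|$ recalled in Lemma \ref{lem_BunderTa} gives $|T_{-q}(te^{i\beta})|=|T_{-q}(-te^{i\beta})|$ exactly when $\cos\beta=0$ (for $q>0$). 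Hence the equal-modulus position is $\beta=\pi/2$, giving $x=T_{-q}(it)=\bigl(q(1+t^2)+i\,t(1-q^2)\bigr)/(1+q^2t^2)$ and $y=\overline x$, while the collinear position is $\beta=0$, giving $x=(q+t)/(1+qt)$ and $y=(q-t)/(1-qt)$ on the diameter through $0$ and $q$.

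For the collinear position, the confocal ellipses with foci $x,y$ have major axis along that diameter and centre $(x+y)/2$ a nonnegative real, so the smallest one meeting $\partial\B^2$ is tangent at $z=1$, whence $\inf_{z\in\partial\B^2}(|x-z|+|z-y|)=2-|x+y|$ and $s_{\B^2}(x,y)=|x-y|/(2-|x+y|)$. Substituting $|x-y|=2t(1-q^2)/(1-q^2t^2)$ and $x+y=2q(1-t^2)/(1-q^2t^2)$, so that $2-|x+y|=2(1-q)(1+qt^2)/(1-q^2t^2)$, and simplifying yields the stated upper bound $(1+q)t/(1+qt^2)$.

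For the equal-modulus position write $x=a+ib$, $\overline x=a-ib$ with $a=q(1+t^2)/(1+q^2t^2)>0$, $b=t(1-q^2)/(1+q^2t^2)>0$ and $|x|^2=(q^2+t^2)/(1+q^2t^2)$; note $|x-y|=2b$. Now the foci lie on a vertical line, so the confocal ellipses have horizontal minor axis lying on the line through $0$; the first of them to meet $\partial\B^2$ touches it either off the real axis or at the covertex $z=1$, according to whether the point of the ellipse farthest from $0$ occurs at an interior angle or at that covertex, and a short computation identifies this dichotomy as $a\le|x|^2$ versus $a>|x|^2$, i.e.\ $q\le t^2$ versus $q>t^2$. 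In the first case one gets $\inf_{z\in\partial\B^2}(|x-z|+|z-\overline x|)=2b/|x|$, hence $s_{\B^2}(x,y)=|x|=\sqrt{(q^2+t^2)/(1+q^2t^2)}$; in the second the tangency at $z=1$ gives $\inf_{z\in\partial\B^2}(|x-z|+|z-\overline x|)=2\sqrt{(1-a)^2+b^2}$, and using $(1-a)^2+b^2=(1-q)^2(1+t^2)/(1+q^2t^2)$ together with $b=t(1-q)(1+q)/(1+q^2t^2)$ this becomes $s_{\B^2}(x,y)=t(1+q)/\sqrt{(1+t^2)(1+q^2t^2)}$. These are the two lower bounds. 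Since all three bounds are attained by the explicit pairs constructed above, which genuinely have hyperbolic midpoint $q$ and separation $\rho$, no sharper bounds depending only on $q$ and $t$ exist; for $n\ge3$ the inequalities follow from the case $n=2$ as in Remark \ref{rmk_nwlg}, because $x$, $y$, $q$ and $0$ lie in a common $2$-plane on which all the relevant quantities are computed, and sharpness persists by taking $x,y$ there.

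The main obstacle is the equal-modulus computation: determining $\inf_{z\in\partial\B^2}(|x-z|+|z-\overline x|)$ for $x=a+ib$ -- equivalently, locating where the confocal ellipse family first meets the unit circle -- and checking that the boundary between the two closed forms is governed exactly by the sign of $a-|x|^2$, which after substitution is the sign of $q-t^2$. The collinear case is routine by comparison, and the only other point requiring care is the bookkeeping in the first step: that $T_q(x)$ and $T_q(y)$ are antipodal and that $\beta=\pi/2$ and $\beta=0$ single out the equal-modulus and collinear positions respectively.
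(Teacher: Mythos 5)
Your proof is correct and follows essentially the same route as the paper: the paper's proof is a one-line citation of the hyperbolic-midpoint-rotation machinery of \cite{sinb} (Def.\ 5.1, Thms 5.3, 5.11 and 5.12) together with Remark \ref{rmk_nwlg}, and your argument reconstructs exactly those ingredients --- reduction via a rotation and the isometry $T_q$ to the collinear and equal-modulus positions $T_{-q}(\pm t)$ and $T_{-q}(\pm it)$, followed by explicit evaluation of $s_{\B^2}$ there. The computed extremal values $t(1+q)/(1+qt^2)$, $\sqrt{(q^2+t^2)/(1+q^2t^2)}$ and $t(1+q)/\sqrt{(1+t^2)(1+q^2t^2)}$, and the dichotomy $a\leq|x|^2\Leftrightarrow q\leq t^2$ governing the tangency point of the confocal ellipse, all check out.
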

\begin{proof}
Follows from \cite[Def. 5.1, p. 17; Thm 5.3, p. 20; Thm 5.11, p. 22 \& Thm 5.12, p. 23]{sinb} and Remark \ref{rmk_nwlg}.
\end{proof}

Finally, we can prove the following corollary.

\begin{corollary}\label{cor_confhypmidrot}
For any conformal mapping $f:\B^n\to f(\B^n)=\B^n$ and for all distinct points $x,y\in\B^n$ with a hyperbolic midpoint $q$ and the distance $t={\rm th}(\rho_{\B^n}(x,y)\slash4)$, 
\begin{align*}
l(|q|,t)
\leq\frac{s_{\B^n}(f(x),f(y))}{s_{\B^n}(x,y)}
\leq u(|q|,t),
\end{align*}
where the functions $l,u:[0,1]\times[0,1]\to\R^+$ are defined as
\begin{equation*}
l(|q|,t)=\frac{1+|q|t^2}{1+|q|}
\quad\text{and}\quad
u(|q|,t)=
\begin{dcases*}
\frac{2t}{1+t^2}\sqrt{\frac{1+|q|^2t^2}{|q|^2+t^2}} & if $|q|<t^2$,\\
\frac{2}{1+|q|}\sqrt{\frac{1+|q|^2t^2}{1+t^2}} & otherwise.
\end{dcases*}
\end{equation*}
\end{corollary}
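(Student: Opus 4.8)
The plan is to reduce the statement about the conformal distortion ratio to the two-sided estimate of Theorem~\ref{thm_hypmidrot}, using only the conformal invariance of the hyperbolic metric and the fact that the hyperbolic midpoint and the radius $t={\rm th}(\rho_{\B^n}(x,y)/4)$ are themselves conformal invariants. First I would observe that, since $f$ is a conformal self-map of $\B^n$, it is a hyperbolic isometry, so $\rho_{\B^n}(f(x),f(y))=\rho_{\B^n}(x,y)$; hence the radius $t$ associated to the pair $f(x),f(y)$ is the same $t$, while the hyperbolic midpoint of $f(x),f(y)$ is $f(q)$, whose absolute value $|f(q)|$ need \emph{not} equal $|q|$. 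The key point is therefore that Theorem~\ref{thm_hypmidrot} gives, for the image pair, bounds in terms of $|f(q)|$ and $t$, and for the original pair bounds in terms of $|q|$ and $t$, so the ratio $s_{\B^n}(f(x),f(y))/s_{\B^n}(x,y)$ is squeezed between (upper bound for the image)/(lower bound for the original) and (lower bound for the image)/(upper bound for the original).

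The next step is to control these quotients uniformly in the unknown quantity $|f(q)|\in[0,1]$. Writing $U(s,t)$ and $L(s,t)$ for the upper and lower bounds of Theorem~\ref{thm_hypmidrot} as functions of $s=|q|$ (or $s=|f(q)|$) and $t$, I would show that for fixed $t$ the common value of $U$ and the quotient we need behaves monotonically in $s$, so that the worst case for the upper bound on the ratio is attained at the extreme admissible value $|f(q)|\to1^-$ in the numerator and $|q|\to0^+$ would be the naive guess for the denominator; however, since $|q|$ is a \emph{given} parameter in the statement (the bounds $l,u$ are allowed to depend on it and on $t$), I only need to optimize over $|f(q)|$. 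Concretely, the denominator $s_{\B^n}(x,y)$ is bounded below by $L(|q|,t)$ and above by $U(|q|,t)$, and one checks that $U(|q|,t)=\dfrac{(1+|q|)t}{1+|q|t^2}$ in all cases, so the factor $\dfrac{1}{U(|q|,t)}=\dfrac{1+|q|t^2}{(1+|q|)t}$ appears; for the numerator I would maximize $U(s,t)$ over $s\in[0,1]$, and since $t\le1$ one finds the supremum is $\dfrac{2t}{1+t^2}$ attained as $s\to1^-$ when $|q|<t^2$ is not the relevant regime — rather one must be careful which branch of $U$ applies to the \emph{image} midpoint, and take the branch-wise maximum; multiplying by $\dfrac{1+|q|t^2}{(1+|q|)t}$ then produces exactly $u(|q|,t)$ after the algebraic simplification $\dfrac{2t}{1+t^2}\cdot\dfrac{1+|q|t^2}{(1+|q|)t}=\dfrac{2}{1+|q|}\cdot\dfrac{1+|q|t^2}{1+t^2}$, matching the ``otherwise'' branch, with the ``$|q|<t^2$'' branch coming from the complementary optimization. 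For the lower bound I would symmetrically take the infimum over $|f(q)|$ of $L(s,t)$, which is $0$ at $s=0$ — so this cannot be right unless one realizes that $L(s,t)$ is \emph{increasing} in $s$ near $0$ but the true infimum of the bound relevant here is $L(0,t)$... so instead I divide the lower bound $L$ for the image by the upper bound $U(|q|,t)$ for the original and check the result is minimized when the image midpoint degenerates, giving $l(|q|,t)=\dfrac{1+|q|t^2}{1+|q|}$, i.e. the minimum of $(1+|f(q)|t^2)t/((1+|f(q)|t^2)\cdot U(|q|,t))$-type expression over the admissible range collapses to $1/U(|q|,t)$ times the minimal numerator factor $t$, yielding $t\cdot\dfrac{1+|q|t^2}{(1+|q|)t}=\dfrac{1+|q|t^2}{1+|q|}$.

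The main obstacle will be the bookkeeping of \emph{which branch} of the piecewise bound in Theorem~\ref{thm_hypmidrot} governs the image pair versus the original pair, since $|q|$ and $|f(q)|$ can lie on opposite sides of $t^2$; one must therefore carry out the optimization over $s\in[0,1]$ of the full piecewise function $U(s,t)$ (and of $L(s,t)$), verifying that $s\mapsto U(s,t)$ is continuous at the junction $s=t^2$, that it is increasing on $[0,1]$, and locating its supremum, and likewise that $s\mapsto L(s,t)$ is increasing with infimum at $s=0$ equal to $L(0,t)$. Once these monotonicity facts are in hand, the corollary is immediate: combine $L(|f(q)|,t)\le s_{\B^n}(f(x),f(y))\le U(|f(q)|,t)$ with $L(|q|,t)\le s_{\B^n}(x,y)\le U(|q|,t)$, divide, take the worst case over $|f(q)|\in[0,1]$, and simplify the resulting expressions to the stated $l(|q|,t)$ and $u(|q|,t)$; the sharpness is inherited from the sharpness claim in Theorem~\ref{thm_hypmidrot} together with the existence (via suitable $T_a$-type M\"obius maps) of conformal self-maps sending $q$ to points of arbitrary modulus in $[0,1)$.
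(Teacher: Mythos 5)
Your overall architecture is workable — bound the image distance and the original distance separately and divide — and your lower bound comes out right: $\inf_s L(s,t)=t$ divided by $U(|q|,t)=\frac{(1+|q|)t}{1+|q|t^2}$ gives $\frac{1+|q|t^2}{1+|q|}=l(|q|,t)$, which is exactly what the paper obtains. But your upper-bound derivation contains a concrete error. For an upper bound on the quotient $s_{\B^n}(f(x),f(y))/s_{\B^n}(x,y)$ you must divide the numerator's \emph{upper} bound by the denominator's \emph{lower} bound, i.e.\ by $L(|q|,t)$, not by $U(|q|,t)$. You instead multiply $\sup_s U(s,t)=\frac{2t}{1+t^2}$ by $\frac{1}{U(|q|,t)}$ and claim the product
\begin{equation*}
\frac{2t}{1+t^2}\cdot\frac{1+|q|t^2}{(1+|q|)t}=\frac{2}{1+|q|}\cdot\frac{1+|q|t^2}{1+t^2}
\end{equation*}
equals the ``otherwise'' branch of $u(|q|,t)$. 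It does not: the stated branch is $\frac{2}{1+|q|}\sqrt{\frac{1+|q|^2t^2}{1+t^2}}$, which carries a square root your expression lacks (try $|q|=t=1/2$: you get $1.2$ versus $\approx1.229$). Since $L\leq U$, dividing by $U(|q|,t)$ produces a quantity \emph{smaller} than the true upper bound, so it is not a valid bound at all. The correct computation is $\frac{2t/(1+t^2)}{L(|q|,t)}$, and the square roots in both branches of $u(|q|,t)$ come precisely from the square roots in the piecewise lower bound $L$ of Theorem \ref{thm_hypmidrot}; the case split $|q|<t^2$ versus $|q|\geq t^2$ is then inherited directly from $L$, not from any ``complementary optimization'' over $|f(q)|$.

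It is also worth noting that the detour through the image midpoint $f(q)$ and the optimization over $|f(q)|\in[0,1]$ is unnecessary. The paper simply observes that, by Lemma \ref{lem_jwsprho}(2), Remark \ref{rmk_th} and conformal invariance,
\begin{equation*}
t={\rm th}\frac{\rho_{\B^n}(f(x),f(y))}{4}\leq s_{\B^n}(f(x),f(y))\leq{\rm th}\frac{\rho_{\B^n}(f(x),f(y))}{2}=\frac{2t}{1+t^2},
\end{equation*}
and then divides these universal bounds for the image pair by the midpoint-dependent bounds of Theorem \ref{thm_hypmidrot} for the original pair. Your extremal values $\inf_s L(s,t)=t$ and $\sup_s U(s,t)=\frac{2t}{1+t^2}$ coincide with these, so your route would reach the same destination once the monotonicity checks you defer are actually carried out and the $L$/$U$ mix-up in the upper bound is repaired — but it buys nothing over the direct argument.
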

\begin{proof}
Note that the points $x,y$ are on the hyperbolic sphere $S_\rho(q,t)$ and, by Theorem \ref{thm_hypmidrot}, we have a lower and upper bound for the distance $s_{\B^n}(x,y)$ expressed only in terms of $|q|$ and $t$. By Lemma \ref{lem_jwsprho}(2), Remark \ref{rmk_th} and the conformal invariance of the hyperbolic metric, 
\begin{align}\label{ine_forfinhmr}
t
={\rm th}\frac{\rho_{\B^n}(f(x),f(y))}{4}
\leq s_{\B^n}(f(x),f(y))
\leq{\rm th}\frac{\rho_{\B^n}(f(x),f(y))}{2}
=\frac{2t}{1+t^2}
\end{align}
Combining the inequality \ref{ine_forfinhmr} with the bounds for $s_{\B^n}(x,y)$ in Theorem \ref{thm_hypmidrot}, it follows that 
\begin{align*}
\frac{1+|q|t^2}{1+|q|}
\leq\frac{s_{\B^n}(f(x),f(y))}{s_{\B^n}(x,y)}
\leq\frac{2t}{1+t^2}\sqrt{\frac{1+|q|^2t^2}{|q|^2+t^2}},\quad\text{if}\quad|q|<t^2\\
\frac{1+|q|t^2}{1+|q|}
\leq\frac{s_{\B^n}(f(x),f(y))}{s_{\B^n}(x,y)}
\leq\frac{2}{1+|q|}\sqrt{\frac{1+|q|^2t^2}{1+t^2}}\quad\text{otherwise}.
\end{align*}
\end{proof}

Let us yet consider the bounds of Corollary \ref{cor_confhypmidrot}. By differentiation, it can be proved that all three quotients
\begin{align*}
\frac{1+|q|t^2}{1+|q|},\quad
\frac{2t}{1+t^2}\sqrt{\frac{1+|q|^2t^2}{|q|^2+t^2}}
\quad\text{and}\quad
\frac{2}{1+|q|}\sqrt{\frac{1+|q|^2t^2}{1+t^2}}
\end{align*}
are strictly decreasing with respect to $|q|$. Furthermore, the function $u(|q|,t)$ is continuous at $|q|=t^2$ because
\begin{align*}
u(t^2,t)=\lim_{|q|\to(t^2)^-}=\frac{2\sqrt{1+t^6}}{(1+t^2)^{3\slash2}}.  \end{align*}
Thus, it follows that, for all $0\leq|q|<1$ and $0<t<1$, 
\begin{align*}
&\frac{1}{2}<\frac{1+t^2}{2}=l(1,t)<l(|q|,t)\leq l(0,t)=1,\\
&1=u(1,t)<u(|q|,t)\leq u(0,t)=\frac{2}{1+t^2}<2.
\end{align*}
Consequently, without the information about the hyperbolic midpoint $q$ of $x$ and $y$, the inequalities in Corollary \ref{cor_confhypmidrot} can be written as just one inequality
\begin{align*}
\frac{1+t^2}{2}
\leq\frac{s_{\B^n}(f(x),f(y))}{s_{\B^n}(x,y)}
\leq\frac{2}{1+t^2}
\end{align*}
where $t={\rm th}(\rho_{\B^n}(x,y)\slash4)$. This result would also follow directly from Lemma \ref{lem_jwsprho}(2), Remark \ref{rmk_th} and the conformal invariance of the hyperbolic metric. Furthermore, because $l(|q|,t)>1\slash2$ and $u(|q|,t)<2$, the inequality of Corollary \ref{cor_confhypmidrot} is always better than the result in Lemma \ref{lem_confG} for the triangular ratio metric.


Let us yet consider the results of Theorem \ref{thm_rpconf}(5) and Corollary \ref{cor_confhypmidrot} through one example.  

\begin{example}\label{ex_boundComp}
Let us fix $a=0.7$, $x=0.1+0.3i$ and $y=0.3+0.5i$. We can compute that \begin{align}\label{quo_Tinex}
\frac{s_{\B^2}(T_a(x),T_a(y))}{s_{\B^2}(x,y)}\approx1.162104,    
\end{align}
where the mapping $T_a$ is as in \eqref{exp_Ta}. Let us then fix $r_l=|x|$, $r_u=|y|$ and choose $R_l$, $R_u$ as in Corollary \ref{cor_RlRuforT}. Now, by applying Theorem \ref{thm_rpconf}(5), we will obtain bounds 
\begin{align}\label{1stbounds_Tinex}
0.6399585
\leq\frac{s_{\B^2}(T_a(x),T_a(y))}{s_{\B^2}(x,y)}
\leq1.818284,
\end{align}
which clearly contain the numerical value of the quotient \eqref{quo_Tinex}. However, if we use Theorem \ref{thm_hypmidrot} to first find the hyperbolic midpoint $q$ of $x,y$, Corollary \ref{cor_confhypmidrot} gives as the bounds
\begin{align*}
0.6964436
\leq\frac{s_{\B^2}(T_a(x),T_a(y))}{s_{\B^2}(x,y)}
\leq1.356354,
\end{align*}
which are better bounds for the quotient \eqref{quo_Tinex} than the ones in \eqref{1stbounds_Tinex}.
\end{example}

In Example \ref{ex_boundComp}, we noticed that, for certain choices of $a,x,y\in\B^2$, the bounds given by Corollary \ref{cor_confhypmidrot} were better for estimating the quotient $s_{\B^2}(T_a(x),T_a(y))\slash s_{\B^2}(x,y)$ than the bounds found by applying Theorem \ref{thm_rpconf}(5) with $r_l=|x|$, $r_u=|y|$ and $R_l,R_u$ as in Corollary \ref{cor_RlRuforT}. The question whether this result holds more commonly or not can be studied with computer experiments: One just needs to choose an arbitrary point $a\in\B^2$ and two other points from $\B^2$, name these two points to $x,y$ so that $|x|\leq|y|$ and compute the bounds as in Example \ref{ex_boundComp}, and repeat this process long enough. Out of 1,000,000 computer simulations like this, both of the lower and upper bound of Corollary \ref{cor_confhypmidrot} were better in 944,821 cases compared to the bounds found by applying Theorem \ref{thm_rpconf}(5) with Corollary \ref{cor_RlRuforT}. Thus, we can conclude that Corollary \ref{cor_confhypmidrot} works very often better than Theorem \ref{thm_rpconf}(5), but not always.

\begin{remark}
In the experiment above, the arbitrary points were chosen by simulating the uniform distribution on the unit disk. To do this, one must first choose the real and imaginary coordinates for a point $x$ as observations from the uniform distribution $U(-1,1)$, then accept this point if $|x|<1$, and repeat until there are enough points. This is because the points generated  by choosing their radius from $U(0,1)$ and their argument from $U(0,2\pi)$ are not uniformly distributed on the unit disk.
\end{remark}



\section{Schwarz lemma for quasiregular mappings}\label{s5}

In this section, we will yet briefly consider $K$-quasiregular and $K$-quasiconformal mappings by using the intrinsic metrics introduced earlier. The behaviour of the triangular ratio metric and other hyperbolic type metrics under quasiregular and quasiconformal mappings has been researched in some earlier works, see for instance \cite[Thms 1.2 \& 1.3, p. 684]{chkv}. Results of this kind have been studied by many authors and there are still many open problems in this field, see \cite[pp. 318-320]{hkvbook}. Below, we briefly introduce the definitions needed but, for more details about the quasiregular mappings, the reader is referred to \cite[Ch. 15, pp.  281-298]{hkvbook}.

\begin{nonsec}{\bf $K$-quasiregular mappings.} \cite[pp. 289-288]{hkvbook}
Let $G\subset\R^n$ be a domain and suppose that a function $f:G\to\R^n$ is ${\rm ACL}^n$, see definition for this from \cite[p. 150]{hkvbook}. Denote the Jacobian determinant of $f$ at point $x\in G$ by $J_f(x)$. The function $f$ is \emph{quasiregular}, if there is a constant $K\geq1$ such that $f$ fulfills
\begin{align}\label{f_outerdil}
|f'(x)|^n\leq KJ_f(x),\quad
|f'(x)|=\max_{|h|=1}|f'(x)h|
\end{align}
a.e in $G$. Suppose that there is also some constant $K\geq1$ such that the inequality
\begin{align}\label{f_innerdil}
J_f(x)\leq K\ell(f'(x))^n,\quad
\ell(f'(x))=\min_{|h|=1}|f'(x)h|,
\end{align}
too, holds a.e in $G$. The \emph{outer dilatation} of $f$, denoted by $K_O(f)$, is the smallest constant $K\geq1$ for which the inequality \eqref{f_outerdil} is true and, similarly, the \emph{inner dilatation} of $f$, denoted by $K_I(f)$, is the smallest constant $K\geq1$ such that the inequality \eqref{f_innerdil} holds. The function $f$ is \emph{$K$-quasiregular}, if $\max\{K_I(f),K_O(f)\}\leq K$.
\end{nonsec}

Let $F_0,F_1$ be non-empty subsets of $\R^n$ and denote the family of all closed non-constant curves joining $F_0$ and $F_1$ in $\R^n$ by $\Delta(F_0,F_1;\R^n)$. The Gr\"otzsch capacity is the decreasing homeomorphism defined as $\gamma_n:(1,\infty)\to (0,\infty)$, 
\begin{align*}
\gamma_n(s)=\M(\Delta(\overline{\B}^n,[se_1,\infty];\R^n)),\quad s>1,
\end{align*}
where $e_1$ is the first unit vector and $\M$ stands for the conformal modulus \cite[(7.17), p. 121]{hkvbook}. For the definition and more details about the conformal modulus, see \cite[pp. 103-131]{hkvbook}. In the special case $n=2$, we have the explicit formulas \cite[(7.18), p. 122]{hkvbook}
\begin{align*}
\gamma_2(1/r)=\frac{2\pi}{\mu(r)},\quad \mu(r)=\frac{\pi}{2}\frac{\K(\sqrt{1-r^2})}{\K(r)},\quad
\K(r)=\int^1_0 \frac{dx}{\sqrt{(1-x^2)(1-r^2x^2)}}
\end{align*}
with $0<r<1$. Define then an increasing homeomorphism $\varphi_{K,n}:[0,1]\to[0,1]$, \cite[(9.13), p. 167]{hkvbook}
\begin{align*}
\varphi_{K,n}(r)=\frac{1}{\gamma_n^{-1}(K\gamma(1\slash r))},\quad
0<r<1,\,K>0.
\end{align*}
Define yet a number $\lambda_n$ by the formula \cite[(9.5) p. 157 \& (9.6), p. 158]{hkvbook} 
\begin{align*}
\log\lambda_n=\lim_{t\to\infty}((\gamma_n(t)\slash\omega_{n-1})^{1\slash(1-n)}-\log t),   
\end{align*}
where $\omega_{n-1}$ is the $(n-1)$-dimensional surface area of the unit sphere $S^{n-1}(0,1)$. Furthermore, note that $4\leq\lambda_n<2e^{n-1}$ for each $n\geq2$ and $\lambda_2=4$.

One of the most important results of the distortion theory and complex analysis in general is the Schwarz lemma. This lemma can be defined also for quasiregular mappings, see Theorem \ref{thm_schforqr}. Note that this result and much more other useful information related to the distortion theory can be found in \cite[Ch. 16, pp.  299-320]{hkvbook}.

\begin{theorem}\label{thm_schforqr}\cite[Thm 16.2, p. 300 \& Thm 16.39, p. 313]{hkvbook}
If $G,G'\in\{\uhp^n,\B^n\}$, $f:G\to f(G)\subset G'$ is a non-constant $K$-quasiregular mapping and $\alpha=K_I(f)^{1\slash(1-n)}$, then
\begin{align*}
&(1)\quad
{\rm th}\frac{\rho_{G'}(f(x),f(y))}{2}
\leq\varphi_{K,n}\left({\rm th}\frac{\rho_G(x,y)}{2}\right)
\leq\lambda_n^{1-\alpha}\left({\rm th}\frac{\rho_G(x,y)}{2}\right)^\alpha,\\
&(2)\quad
\rho_{G'}(f(x),f(y))
\leq K_I(f)(\rho_G(x,y)+\log4)\\
\intertext{holds for all $x,y\in G$. Furthermore, in the planar case $n=2$,}
&(3)\quad\rho_{G'}(f(x),f(y))\leq c(K)\max\{\rho_G(x,y),\rho_G(x,y)^{1\slash K}\} 
\end{align*}
for all $x,y\in G$, where
\begin{align*}
c(K)=2{\rm arth}(\varphi_{K,2}({\rm th}(1\slash2)))\leq v(K-1)+K, \quad v=\log(2(1+\sqrt{1-1\slash e^2}))<1.3507,    
\end{align*}
as in \cite[Thm 16.39, p. 313]{hkvbook}. Note that $c(K)\to1$ when $K\to1$ and, by the conformal invariance of the hyperbolic metric, the result $(3)$ also holds for any two simply connected planar domains $G$, $G'$ because they can be mapped conformally onto the unit disk $\B^2$.
\end{theorem}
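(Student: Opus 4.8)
The plan is to assemble the three parts from the classical modulus‑of‑curve‑family machinery rather than to reprove that machinery; all three are forms of known distortion theorems, so the work is essentially bookkeeping of constants. For part $(1)$ I would first invoke the conformal invariance of $\rho$ to reduce to the model domains: pre‑ and postcomposing $f$ with conformal maps one may assume $G=G'=\B^n$, and a further M\"obius normalization puts one of the two points at the origin; since conformal maps are $1$‑quasiconformal the composite stays $K$‑quasiregular with the same $K_I$. The core step is then the $K_O$‑inequality (Poletsky's inequality) for the conformal modulus $\M$: applied to the spherical ring separating $\{f(x),f(y)\}$ from $\partial\B^n$ it bounds $\M$ of the image family by $K_I(f)$ times $\M$ of the source family, and inserting the Gr\"otzsch/Teichm\"uller extremal configurations turns this into ${\rm th}(\rho_{G'}(f(x),f(y))/2)\le\varphi_{K_I(f),n}({\rm th}(\rho_G(x,y)/2))\le\varphi_{K,n}({\rm th}(\rho_G(x,y)/2))$, the last step by monotonicity of $\varphi_{\cdot,n}$ and the first being exactly the defining property of $\varphi$. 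The second inequality in $(1)$ is then the standard majorant $\varphi_{K_I(f),n}(r)\le\lambda_n^{1-\alpha}r^{\alpha}$, $\alpha=K_I(f)^{1/(1-n)}$, a functional inequality coming from the asymptotics defining $\lambda_n$ together with the monotonicity of $r\mapsto\varphi_{K_I(f),n}(r)/r^{\alpha}$.

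For part $(2)$ I would derive the additive bound from $(1)$: writing $s={\rm th}(\rho_G(x,y)/2)$ and using that $\rho=2\,{\rm arth}({\rm th}(\rho/2))$ is increasing in ${\rm th}(\rho/2)$, together with ${\rm th}(\rho_{G'}(f(x),f(y))/2)\le\varphi_{K,n}(s)$ and the identity $1-s=2/(e^{\rho_G(x,y)}+1)$, one obtains an affine majorant in $\rho_G(x,y)$ whose constant simplifies to $K_I(f)(\rho_G(x,y)+\log4)$; equivalently this is \cite[Thm 16.2]{hkvbook} and may be quoted directly. For part $(3)$ I would specialize to $n=2$, where $\varphi_{K,2}(r)=1/\gamma_2^{-1}(K\gamma_2(1/r))$ has the explicit form through $\mu$; the exponent $1/K$ and the two‑regime maximum reflect the behaviour of $\varphi_{K,2}$ near $0$ and near $1$, while the explicit constant $c(K)=2\,{\rm arth}(\varphi_{K,2}({\rm th}(1/2)))$ and the bound $c(K)\le v(K-1)+K$ are precisely \cite[Thm 16.39]{hkvbook}, obtained from the sharp functional inequalities for $\mu$; from $\varphi_{1,2}(r)=r$ one reads off $c(K)\to1$ as $K\to1$. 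The extension to arbitrary simply connected planar $G,G'$ is then immediate: map both conformally onto $\B^2$, use that $\rho$ is unchanged and that pre/postcomposition with conformal maps preserves $K$‑quasiregularity.

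The main obstacle is entirely inside the machinery I propose to quote: the $K_O$‑inequality for the conformal modulus in $\R^n$, and, for the sharp planar constant in $(3)$, the delicate functional inequalities for the Gr\"otzsch capacity $\mu(r)$ (equivalently for $\varphi_{K,2}$). Reproving those from scratch would be the bulk of the work; granted them, parts $(1)$--$(3)$ assemble routinely, the only care being the tracking of the constants $\lambda_n$, $\alpha$ and $c(K)$.
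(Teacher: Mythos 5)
This theorem is quoted verbatim from \cite[Thm 16.2 \& Thm 16.39]{hkvbook} and the paper supplies no proof of its own, so your sketch is not competing with anything in the text; it is a faithful outline of the standard argument in the cited source (modulus majorization via the Gr\"otzsch/Teichm\"uller configurations, the functional majorant $\varphi_{K,n}(r)\le\lambda_n^{1-\alpha}r^{\alpha}$, and the explicit planar estimates for $\mu$ giving $c(K)$). One small correction: the inequality $\M(f\Gamma)\le K_I(f)\,\M(\Gamma)$ that you invoke is Poletsky's (the $K_I$-inequality), not the $K_O$-inequality; you apply it in the right direction with the right constant, so this is only a naming slip.
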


Below in results from Theorem \ref{thm_dkqr} to Corollary \ref{cor_jpkqr},  we prove several results using part (1) of Theorem \ref{thm_schforqr} only and leave it for the interested reader to refine these results for the two-dimensional case using part (3) of Theorem \ref{thm_schforqr}.

\begin{theorem}\label{thm_dkqr}
If $f:\B^n\to f(\B^n)\subset\B^n$ is a $K$-quasiregular mapping with inner dilatation of $K_I(f)$, then for all $x,y\in\B^n$ and $\alpha\leq K_I(f)^{1\slash(1-n)}$,
\begin{align}\label{ine_qrKI}
d_{\B^n}(f(x),f(y))
\leq\varphi_{K,n}\left(\frac{2d_{\B^n}(x,y)}{1+d_{\B^n}(x,y)^2}\right)
\leq\lambda^{1-\alpha}_n\left(\frac{2d_{\B^n}(x,y)}{1+d_{\B^n}(x,y)^2}\right)^\alpha,
\end{align}
where $d_G\in\{j^*_G,w_G,s_G,p_G\}$.
\end{theorem}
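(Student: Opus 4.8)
The strategy is to reduce the claim to Theorem \ref{thm_schforqr}(1) via the two-sided comparison between the intrinsic metrics $d_G\in\{j^*_G,w_G,s_G,p_G\}$ and $\operatorname{th}(\rho_{\B^n}/2)$ supplied by Lemma \ref{lem_jwsprho}(2). Recall that for all such $d_G$ we have
\begin{align*}
\operatorname{th}\frac{\rho_{\B^n}(x,y)}{4}\leq d_{\B^n}(x,y)\leq\operatorname{th}\frac{\rho_{\B^n}(x,y)}{2}.
\end{align*}

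First I would handle the argument of $\varphi_{K,n}$. Since $\varphi_{K,n}$ is an increasing homeomorphism of $[0,1]$, the upper bound $d_{\B^n}(f(x),f(y))\leq\operatorname{th}(\rho_{\B^n}(f(x),f(y))/2)\leq\varphi_{K,n}(\operatorname{th}(\rho_{\B^n}(x,y)/2))$ from Theorem \ref{thm_schforqr}(1) must be rewritten with $\operatorname{th}(\rho_{\B^n}(x,y)/2)$ replaced by the quantity $2d_{\B^n}(x,y)/(1+d_{\B^n}(x,y)^2)$. The key identity is the double-angle formula for $\operatorname{th}$ recorded in Remark \ref{rmk_th}: writing $t=\operatorname{th}(\rho_{\B^n}(x,y)/4)$ we have $\operatorname{th}(\rho_{\B^n}(x,y)/2)=2t/(1+t^2)$. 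Since the map $t\mapsto 2t/(1+t^2)$ is increasing on $[0,1]$, and since $t=\operatorname{th}(\rho_{\B^n}(x,y)/4)\leq d_{\B^n}(x,y)$ by the lower bound in Lemma \ref{lem_jwsprho}(2), monotonicity gives
\begin{align*}
\operatorname{th}\frac{\rho_{\B^n}(x,y)}{2}=\frac{2t}{1+t^2}\leq\frac{2d_{\B^n}(x,y)}{1+d_{\B^n}(x,y)^2}.
\end{align*}
Applying the increasing function $\varphi_{K,n}$ then yields $\varphi_{K,n}(\operatorname{th}(\rho_{\B^n}(x,y)/2))\leq\varphi_{K,n}(2d_{\B^n}(x,y)/(1+d_{\B^n}(x,y)^2))$, which together with Theorem \ref{thm_schforqr}(1) proves the first inequality in \eqref{ine_qrKI}.

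For the second inequality in \eqref{ine_qrKI}, I would simply invoke the right-hand estimate of Theorem \ref{thm_schforqr}(1), namely $\varphi_{K,n}(r)\leq\lambda_n^{1-\alpha}r^\alpha$ for $\alpha=K_I(f)^{1/(1-n)}$, applied with $r=2d_{\B^n}(x,y)/(1+d_{\B^n}(x,y)^2)\in[0,1]$; one should note this bound remains valid for any $\alpha\leq K_I(f)^{1/(1-n)}$ since $\lambda_n\geq 4>1$ and $r\leq 1$ make the right-hand side decreasing in $\alpha$, so decreasing $\alpha$ only weakens the bound. The only subtlety worth checking is that $2d_{\B^n}(x,y)/(1+d_{\B^n}(x,y)^2)\leq 1$, which holds because $d_{\B^n}(x,y)\in[0,1]$ and $(1-d_{\B^n}(x,y))^2\geq 0$; this keeps the argument inside the domain of $\varphi_{K,n}$. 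I do not expect a real obstacle here — the whole proof is a chaining of monotone inequalities — but the one place to be careful is making sure the substitution of $\operatorname{th}(\rho_{\B^n}/2)$ by $2d_{\B^n}/(1+d_{\B^n}^2)$ goes in the correct direction, which is exactly what the lower half of Lemma \ref{lem_jwsprho}(2) combined with the monotonicity of $t\mapsto 2t/(1+t^2)$ guarantees.
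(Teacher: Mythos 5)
Your proposal is correct and follows essentially the same route as the paper: both use the two-sided bound of Lemma \ref{lem_jwsprho}(2) together with the double-angle identity of Remark \ref{rmk_th} to replace ${\rm th}(\rho_{\B^n}/2)$ by $2d_{\B^n}/(1+d_{\B^n}^2)$, then apply Theorem \ref{thm_schforqr}(1) and note that the final bound is decreasing in $\alpha$ since $\lambda_n\geq4$ and the argument is at most $1$. Your extra checks (monotonicity of $\varphi_{K,n}$ and of $t\mapsto 2t/(1+t^2)$, and that the argument stays in $[0,1]$) are exactly the details the paper leaves implicit.
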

\begin{proof}
If $d_{\B^n}$ is one of the metrics and quasi-metrics in $\{j^*_{\B^n},w_{\B^n},s_{\B^n},p_{\B^n}\}$, then by Lemma \ref{lem_jwsprho}(2) and Remark \ref{rmk_th},
\begin{align}\label{ine_dkqr}
d_{\B^n}(x,y)\leq
{\rm th}\frac{\rho_{\B^n}(x,y)}{2}
=\frac{2{\rm th}(\rho_{\B^n}(x,y)\slash4)}{1+{\rm th}^2(\rho_{\B^n}(x,y)\slash4)} 
\leq\frac{2d_{\B^n}(x,y)}{1+d_{\B^n}(x,y)^2}
\end{align}
for all points $x,y\in\B^n$. Thus, it follows directly from Theorem \ref{thm_schforqr}(1) that the inequality \eqref{ine_qrKI} holds with $\alpha=K_I(f)^{1\slash(1-n)}$. Note then that $\lambda_n\geq4$ and $2d_{\B^n}(x,y)\slash(1+d_{\B^n}(x,y)^2)\leq1$, so the third expression in the inequality \eqref{ine_qrKI} is decreasing with respect to $\alpha$. Consequently, $\alpha$ can be replaced by any constant smaller than $K_I(f)^{1\slash(1-n)}$. 
\end{proof}

\begin{remark}\label{rmk_K}
Because $K\geq K_I(f)$ for any $K$-quasiregular mapping $f$ with the inner dilatation of $K_I(f)$, we have $K^{1\slash(1-n)}\leq K_I(f)^{1\slash(1-n)}$, and Theorems \ref{thm_dkqr} and \ref{thm_jpqr} and Corollary \ref{cor_jpkqr} hold with $\alpha=K^{1\slash(1-n)}$.
\end{remark}

We can show that the result of Theorem \ref{thm_dkqr} is sharp, at least in the two-dimensional case.

\begin{corollary}
For all $x,y\in\B^2$, any $d_G\in\{j^*_G,w_G,s_G,p_G\}$ and every $K$-quasiregular mapping $f:\B^2\to f(\B^2)\subset\B^2$,
\begin{align*}
d_{\B^2}(f(x),f(y))
\leq\varphi_{2K,2}(d_{\B^2}(x,y)^2)
\leq4^{1-1\slash(2K)}d_{\B^2}(x,y)^{1\slash K},
\end{align*}
and the constant $4^{1-1\slash(2K)}$ here is sharp for $K=1$.
\end{corollary}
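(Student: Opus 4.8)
The plan is to read off both inequalities from Theorem~\ref{thm_dkqr} in the case $n=2$, after rewriting the argument $\tfrac{2d}{1+d^{2}}$ of $\varphi$ by means of the classical Landen identity for the modulus function; throughout I write $d=d_{\B^2}(x,y)\in[0,1]$.

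First I would apply Theorem~\ref{thm_dkqr} with $n=2$. Since $f$ is $K$-quasiregular we have $K\ge K_I(f)$, hence $1/K\le 1/K_I(f)=K_I(f)^{1/(1-2)}$, so Remark~\ref{rmk_K} permits the choice $\alpha=1/K$; with $\lambda_2=4$ this gives
\[
d_{\B^2}(f(x),f(y))\;\le\;\varphi_{K,2}\!\left(\frac{2d}{1+d^{2}}\right)\;\le\;4^{\,1-1/K}\left(\frac{2d}{1+d^{2}}\right)^{1/K}.
\]
Next I would perform the Landen substitution: writing $\tfrac{2d}{1+d^{2}}=\tfrac{2\sqrt{d^{2}}}{1+d^{2}}$ and using the classical relation $\mu\!\bigl(\tfrac{2\sqrt{r}}{1+r}\bigr)=\tfrac12\mu(r)$ (see \cite{hkvbook}) --- which in the notation of the present paper is exactly $\varphi_{2,2}(r)=\tfrac{2\sqrt r}{1+r}$ --- together with the semigroup law $\varphi_{K,2}\circ\varphi_{L,2}=\varphi_{KL,2}$ (immediate from $\varphi_{K,2}(r)=\mu^{-1}(\mu(r)/K)$), one obtains
\[
\varphi_{K,2}\!\left(\frac{2d}{1+d^{2}}\right)=\varphi_{K,2}\bigl(\varphi_{2,2}(d^{2})\bigr)=\varphi_{2K,2}(d^{2}).
\]
This already yields the first inequality $d_{\B^2}(f(x),f(y))\le\varphi_{2K,2}(d^{2})$.

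For the remaining inequality I would use the elementary estimate
\[
4^{\,1-1/K}\left(\frac{2d}{1+d^{2}}\right)^{1/K}\le 4^{\,1-1/(2K)}\,d^{1/K},
\]
which follows at once from $4^{-1/(2K)}=2^{-1/K}$ and $(1+d^{2})^{-1/K}\le 1$ (alternatively, one may quote the standard power majorant $\varphi_{K',2}(r)\le 4^{\,1-1/K'}r^{1/K'}$ with $K'=2K$, $r=d^{2}$). Combining the three displays closes the chain. Finally, for the sharpness claim when $K=1$: then $4^{1-1/2}=2$ and the assertion reduces to $d_{\B^2}(f(x),f(y))\le 2\,d_{\B^2}(x,y)$, i.e.\ to Corollary~\ref{cor_mobhforw}; taking $f=T_a$ as in \eqref{exp_Ta}, Lemma~\ref{lem_Tasup} gives $\sup_{x,y\in\B^2}\tfrac{d_{\B^2}(T_a(x),T_a(y))}{d_{\B^2}(x,y)}\ge 1+|a|$, which tends to $2$ as $|a|\to1^{-}$, so no constant below $2$ can replace $4^{1-1/(2K)}$ for $K=1$.

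The only facts used beyond the excerpt are the Landen identity $\varphi_{2,2}(r)=2\sqrt r/(1+r)$ and the composition law for $\varphi_{K,2}$, both classical and found in \cite{hkvbook}. I do not expect a genuine obstacle here: the content is bookkeeping --- applying the Landen relation in the right direction with $r=d^{2}$, tracking the parameter change $K\mapsto 2K$, and observing that replacing $K_I(f)^{1/(1-n)}$ by the smaller $1/K$ is harmless because the relevant power majorant is decreasing in the exponent.
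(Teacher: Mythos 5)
Your proposal is correct and follows essentially the same route as the paper: apply Theorem \ref{thm_dkqr} with $n=2$, $\alpha=1\slash K$ and $\lambda_2=4$, convert $\varphi_{K,2}\bigl(2d\slash(1+d^2)\bigr)$ into $\varphi_{2K,2}(d^2)$ via the identity $\varphi_{2,2}(t^2)=2t\slash(1+t^2)$ together with the composition law (the paper simply cites \cite[Ex. 7.34(1), p. 125]{hkvbook} for both), absorb the factor $2^{1\slash K}$ into $4^{1-1\slash(2K)}$, and get sharpness for $K=1$ from Corollary \ref{cor_mobhforw}. The only cosmetic difference is that you derive the Landen/composition identity from $\mu$ rather than quoting it, and you write out the exponent bookkeeping that the paper leaves implicit.
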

\begin{proof}
Fix $n=2$ and $\alpha=K^{1\slash(1-n)}=1\slash K$. By Remark \ref{rmk_K}, the condition $\alpha\leq K_I(f)^{1\slash(1-n)}$ of Theorem \ref{thm_dkqr} holds, and the inequality \eqref{ine_qrKI} can be now written as
\begin{align*}
d_{\B^2}(f(x),f(y))
\leq\varphi_{K,2}\left(\frac{2d_{\B^2}(x,y)}{1+d_{\B^2}(x,y)^2}\right)
\leq4^{1-1\slash(2K)}\left(\frac{d_{\B^2}(x,y)}{1+d_{\B^2}(x,y)^2}\right)^{1\slash K}.
\end{align*}
By \cite[Ex. 7.34(1), p. 125]{hkvbook}, for every $t\in[0,1)$ and $K\geq1$,
\begin{align*}
\varphi_{2,2}(t^2)=\frac{2t}{1+t^2}
\quad\text{and}\quad
\varphi_{2K,2}(t^2)=\varphi_{K,2}\left(\frac{2t}{1+t^2}\right).
\end{align*}
Thus, the inequality of our corollary follows. If $K=1$, then $4^{1-1\slash(2K)}=2$, which is sharp by Corollary \ref{cor_mobhforw}.
\end{proof}

Note that, in Theorem \ref{thm_dkqr}, the Schwarz lemma is created for each metric or quasi-metric $d_G$ in $\{j^*_G,w_G,s_G,p_G\}$. However, its proof is based on the inequality \ref{ine_dkqr}, which only requires that we have a lower bound for ${\rm th}(\rho_{\B^n}(x,y)\slash2)$ and an upper bound for ${\rm th}(\rho_{\B^n}(x,y)\slash4)$. By Lemma \ref{lem_jwsprho}(2), we know that the point pair function $p_{\B^n}$ is the best lower bound for ${\rm th}(\rho_{\B^n}(x,y)\slash2)$ and the $j^*_{\B^n}$-metric is the best upper bound for ${\rm th}(\rho_{\B^n}(x,y)\slash4)$, so it would be sensible to create a new result by using both of these metrical functions together.

\begin{theorem}\label{thm_jpqr}
If $f:\B^n\to f(\B^n)\subset\B^n$ is a $K$-quasiregular mapping with inner dilatation of $K_I(f)$, then for all $x,y\in\B^n$ and $\alpha\leq K_I(f)^{1\slash(1-n)}$,
\begin{align*}
&\frac{|f(x)-f(y)|}{|f(x)-f(y)|+2-2\max\{|f(x)|,|f(y)|\}}\\
\leq&\varphi_{K,n}\left(\frac{|x-y|\sqrt{|x-y|^2+4(1-|x|)(1-|y|)}}{|x-y|^2+2(1-|x|)(1-|y|)}\right)\\
\leq&\lambda^{1-\alpha}_n\left(\frac{|x-y|\sqrt{|x-y|^2+4(1-|x|)(1-|y|)}}{|x-y|^2+2(1-|x|)(1-|y|)}\right)^\alpha.
\end{align*}
\end{theorem}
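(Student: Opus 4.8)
The plan is to mimic the proof of Theorem~\ref{thm_dkqr}, but now using Lemma~\ref{lem_jwsprho}(2) twice in an asymmetric way: at the image point $f(x),f(y)$ I invoke the $j^*_{\B^n}$-metric, which is the sharpest lower bound for ${\rm th}(\rho_{\B^n}/2)$, and at the source point $x,y$ I invoke the point pair function $p_{\B^n}$, which is the sharpest upper bound for ${\rm th}(\rho_{\B^n}/4)$. First I would record the two inputs. Since $d_{\B^n}(z)=1-|z|$ and $f(\B^n)\subset\B^n$, the formula for $j^*_G$ gives
\begin{align*}
j^*_{\B^n}(f(x),f(y))=\frac{|f(x)-f(y)|}{|f(x)-f(y)|+2-2\max\{|f(x)|,|f(y)|\}},
\end{align*}
so by Lemma~\ref{lem_jwsprho}(2) the left-hand side of the asserted inequality equals $j^*_{\B^n}(f(x),f(y))$ and satisfies $j^*_{\B^n}(f(x),f(y))\le{\rm th}(\rho_{\B^n}(f(x),f(y))/2)$. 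On the source side, Lemma~\ref{lem_jwsprho}(2) gives ${\rm th}(\rho_{\B^n}(x,y)/4)\le p_{\B^n}(x,y)$, so by the identity in Remark~\ref{rmk_th} and the fact that $t\mapsto 2t/(1+t^2)$ is increasing on $[0,1]$,
\begin{align*}
{\rm th}\frac{\rho_{\B^n}(x,y)}{2}
=\frac{2\,{\rm th}(\rho_{\B^n}(x,y)/4)}{1+{\rm th}^2(\rho_{\B^n}(x,y)/4)}
\le\frac{2p_{\B^n}(x,y)}{1+p_{\B^n}(x,y)^2}.
\end{align*}
Finally, a short computation with $p=p_{\B^n}(x,y)=|x-y|/\sqrt{|x-y|^2+4(1-|x|)(1-|y|)}$ shows that $2p/(1+p^2)$ is exactly the expression $|x-y|\sqrt{|x-y|^2+4(1-|x|)(1-|y|)}\,\big/\,(|x-y|^2+2(1-|x|)(1-|y|))$ occurring inside $\varphi_{K,n}$ in the statement.

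Next I would assemble the chain. By the conformal invariance of the hyperbolic metric and Theorem~\ref{thm_schforqr}(1) applied to $f$ with $\alpha=K_I(f)^{1/(1-n)}$,
\begin{align*}
{\rm th}\frac{\rho_{\B^n}(f(x),f(y))}{2}
\le\varphi_{K,n}\!\left({\rm th}\frac{\rho_{\B^n}(x,y)}{2}\right)
\le\varphi_{K,n}\!\left(\frac{2p_{\B^n}(x,y)}{1+p_{\B^n}(x,y)^2}\right)
\le\lambda_n^{1-\alpha}\left(\frac{2p_{\B^n}(x,y)}{1+p_{\B^n}(x,y)^2}\right)^{\alpha},
\end{align*}
where the second inequality uses that $\varphi_{K,n}$ is an increasing homeomorphism of $[0,1]$ together with the bound ${\rm th}(\rho_{\B^n}(x,y)/2)\le 2p_{\B^n}(x,y)/(1+p_{\B^n}(x,y)^2)$ obtained above, and the third inequality is the second estimate in Theorem~\ref{thm_schforqr}(1) evaluated at $r=2p_{\B^n}(x,y)/(1+p_{\B^n}(x,y)^2)\in[0,1]$. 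Chaining this with $j^*_{\B^n}(f(x),f(y))\le{\rm th}(\rho_{\B^n}(f(x),f(y))/2)$ and substituting the explicit formulas for $j^*_{\B^n}$ and for $2p/(1+p^2)$ proves the theorem for $\alpha=K_I(f)^{1/(1-n)}$. To extend it to all $\alpha\le K_I(f)^{1/(1-n)}$ I would argue exactly as in Theorem~\ref{thm_dkqr}: since $\lambda_n\ge4$ and $2p_{\B^n}(x,y)/(1+p_{\B^n}(x,y)^2)\le1$, the rightmost expression is decreasing in $\alpha$, hence only grows as $\alpha$ decreases; by Remark~\ref{rmk_K} this in particular covers $\alpha=K^{1/(1-n)}$.

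This argument presents no serious obstacle: it is a direct combination of Lemma~\ref{lem_jwsprho}(2), Remark~\ref{rmk_th} and Theorem~\ref{thm_schforqr}(1), entirely parallel to Theorem~\ref{thm_dkqr}. The only points requiring a little care are the bookkeeping of which intrinsic metric to use on each side---the whole improvement over Theorem~\ref{thm_dkqr} comes from pairing $j^*_{\B^n}$ at the image with $p_{\B^n}$ at the source, as flagged in the paragraph preceding the statement---and the elementary algebraic verification that $2p_{\B^n}(x,y)/(1+p_{\B^n}(x,y)^2)$ simplifies to the stated expression in $|x-y|$, $|x|$ and $|y|$.
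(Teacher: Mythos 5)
Your argument is correct and is exactly the paper's proof: the paper likewise repeats the proof of Theorem~\ref{thm_dkqr} with the chain $j^*_{\B^n}(x,y)\le{\rm th}(\rho_{\B^n}(x,y)/2)\le 2p_{\B^n}(x,y)/(1+p_{\B^n}(x,y)^2)$ from Lemma~\ref{lem_jwsprho}(2) and Remark~\ref{rmk_th}, applying the left part at the image points and the right part at the source points before invoking Theorem~\ref{thm_schforqr}(1). One cosmetic slip in your motivation: among $j^*,w,s,p$ the sharpest lower bound for ${\rm th}(\rho_{\B^n}/2)$ is $p_{\B^n}$ and the sharpest upper bound for ${\rm th}(\rho_{\B^n}/4)$ is $j^*_{\B^n}$ (you state the reverse), but this does not affect the validity of the inequalities you actually use.
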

\begin{proof}
Just like the proof of Theorem \ref{thm_dkqr}, except now we use the inequality
\begin{align*}
\frac{|x-y|}{|x-y|+2-2\max\{|x|,|y|\}}
&=j^*_{\B^n}(x,y)\leq
{\rm th}\frac{\rho_{\B^n}(x,y)}{2}
=\frac{2{\rm th}(\rho_{\B^n}(x,y)\slash4)}{1+{\rm th}^2(\rho_{\B^n}(x,y)\slash4)}\\
& \leq\frac{2p_{\B^n}(x,y)}{1+p_{\B^n}(x,y)^2}
=\frac{|x-y|\sqrt{|x-y|^2+4(1-|x|)(1-|y|)}}{|x-y|^2+2(1-|x|)(1-|y|)},
\end{align*}
which follows from \ref{lem_jwsprho}(2) and Remark \ref{rmk_th}. 
\end{proof}

\begin{corollary}\label{cor_jpkqr}
If $f:\B^n\to f(\B^n)\subset\B^n$ is a $K$-quasiregular mapping with inner dilatation of $K_I(f)$ such that $f(0)=0$, then for all $x\in\B^n$ and $\alpha\leq K_I(f)^{1\slash(1-n)}$,
\begin{align*}
\frac{|f(x)|}{2-|f(x)|}
\leq\varphi_{K,n}\left(\frac{|x|(2-|x|)}{|x|^2-2|x|+2}\right)
\leq\lambda^{1-\alpha}_n\left(\frac{|x|(2-|x|)}{|x|^2-2|x|+2}\right)^\alpha.
\end{align*}
\end{corollary}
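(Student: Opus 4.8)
The plan is to specialize Theorem~\ref{thm_jpqr} to the case $f(0)=0$ and simplify the three expressions accordingly. First I would set $y=0$ in Theorem~\ref{thm_jpqr}; then $f(y)=f(0)=0$, $|x-y|=|x|$, $|f(x)-f(y)|=|f(x)|$, $\max\{|f(x)|,|f(y)|\}=|f(x)|$ and $d_{\B^n}(0)=1$, so $(1-|x|)(1-|y|)=1-|x|$. Substituting these into the leftmost quantity gives
\begin{align*}
\frac{|f(x)|}{|f(x)|+2-2|f(x)|}=\frac{|f(x)|}{2-|f(x)|},
\end{align*}
which is the desired left-hand side.

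Next I would simplify the argument of $\varphi_{K,n}$. With $|x-y|=|x|$ and $(1-|x|)(1-|y|)=1-|x|$, the inner expression becomes
\begin{align*}
\frac{|x|\sqrt{|x|^2+4(1-|x|)}}{|x|^2+2(1-|x|)}
=\frac{|x|\sqrt{|x|^2-4|x|+4}}{|x|^2-2|x|+2}
=\frac{|x|\,|{|x|-2}|}{|x|^2-2|x|+2}
=\frac{|x|(2-|x|)}{|x|^2-2|x|+2},
\end{align*}
where I used $|x|^2-4|x|+4=(|x|-2)^2$ and $0\le|x|<1<2$ so that $|{|x|-2}|=2-|x|$. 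This is exactly the argument appearing in the statement, so both the $\varphi_{K,n}$ term and the $\lambda_n^{1-\alpha}(\cdot)^\alpha$ term come out in the claimed form directly from Theorem~\ref{thm_jpqr}.

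There is essentially no obstacle here: the corollary is a pure substitution $y=0$ into Theorem~\ref{thm_jpqr} together with the algebraic identity $(|x|-2)^2=|x|^2-4|x|+4$ and the sign check $2-|x|>0$. The only point worth stating explicitly is that the hypothesis $\alpha\le K_I(f)^{1/(1-n)}$ is inherited verbatim from Theorem~\ref{thm_jpqr}, and that $f(0)=0\in f(\B^n)$ keeps us in the setting $f(\B^n)\subset\B^n$ required there. Hence the proof is a two-line computation, and I would simply write: ``Put $y=0$ in Theorem~\ref{thm_jpqr}; since $f(0)=0$ and $d_{\B^n}(0)=1$, the three expressions reduce as above, using $|x|^2-4|x|+4=(2-|x|)^2$.''
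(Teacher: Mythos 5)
Your proposal is correct and matches the paper's proof, which is exactly the one-line substitution $y=f(y)=0$ into Theorem~\ref{thm_jpqr}; your algebraic simplification of the argument via $|x|^2-4|x|+4=(2-|x|)^2$ is the right (and only) computation needed.
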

\begin{proof}
Follows from Theorem \ref{thm_jpqr} by choosing $y=f(y)=0$.
\end{proof}

Let us yet briefly inspect how the quasi-metric $w_G$ behaves under different mappings defined in an open sector $S_\theta$. Recall that the function $w_G$ can be defined only in the case where the domain $G$ is convex, so we must let the angle of the sector be $0<\theta\leq\pi$. Our next lemma deals with one conformal mapping commonly used to change the angle of the sector $S_\theta$.

\begin{lemma}
If $0<\alpha\leq\beta\leq\pi$ and $f:S_\alpha\to S_\beta$, $f(z)=z^{(\beta\slash\alpha)}$, then the inequality
\begin{align*}
w_{S_\alpha}(x,y)\leq w_{S_\beta}(f(x),f(y))\leq\frac{\beta\sin(\alpha\slash2)}{\alpha\sin(\beta\slash2)}w_{S_\alpha}(x,y)
\end{align*}
holds for all $x,y\in S_\alpha$ and the constants here are sharp.
\end{lemma}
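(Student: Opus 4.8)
The plan is to work in the plane (a sector is already a plane domain), to write $p=\beta\slash\alpha\ge1$, and to use that $w$ is invariant under the dilations $z\mapsto\lambda z$, $\lambda>0$, which preserve every $S_\theta$; in particular we may normalise $\max\{|x|,|y|\}=1$. The first ingredient is the shape of $w$ in a convex sector: for $z=re^{i\varphi}$ with $0<\varphi<\theta\le\pi$, the nearest boundary point is the foot of the perpendicular onto the closer bounding ray (both feet if $\varphi=\theta\slash2$), so the reflection of $z$ through such a point is $z^{\ast}=re^{-i\varphi}$ when $\varphi\le\theta\slash2$ and $z^{\ast}=re^{i(2\theta-\varphi)}$ when $\varphi\ge\theta\slash2$; hence, off the bisector, $w_{S_\theta}(x,y)=|x-y|\slash\min\{|x-y^{\ast}|,|y-x^{\ast}|\}$ (and on the bisector one takes the minimum over the two reflections of each point). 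The key structural fact is that $f(z)=z^{p}$ commutes with these reflections, $(f(z))^{\ast}=f(z^{\ast})$, and keeps each point on its own side of the bisector (because $p\varphi<\beta\slash2$ exactly when $\varphi<\alpha\slash2$). Consequently, writing $x=re^{i\varphi}$, $y=\rho e^{i\psi}$,
\[
\frac{w_{S_\beta}(f(x),f(y))}{w_{S_\alpha}(x,y)}=\frac{|f(x)-f(y)|}{|x-y|}\cdot\frac{\min\{A_{1},A_{2}\}}{\min\{\Psi_{1}A_{1},\Psi_{2}A_{2}\}},\qquad A_{1}=|x-y^{\ast}|,\ A_{2}=|y-x^{\ast}|,
\]
where $\Psi_{j}=|f(u)-f(v)|\slash|u-v|$ for the pair $(u,v)$ defining $A_{j}$.

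All of this is controlled by the single function $\Psi(r,\rho,\delta)=\bigl|(z^{p}-\rho^{p})\slash(z-\rho)\bigr|$ with $z=re^{i\delta}$, equivalently $\Psi(r,\rho,\delta)=\left(\frac{r^{2p}+\rho^{2p}-2r^{p}\rho^{p}\cos(p\delta)}{r^{2}+\rho^{2}-2r\rho\cos\delta}\right)^{1\slash2}$, since $|f(u)-f(v)|=\Psi(|u|,|v|,\delta_{uv})\,|u-v|$ for $\delta_{uv}$ the angular separation of $u,v$. Let $\delta_{0}$ be the angular separation of $x,y$, and $\delta_{1},\delta_{2}$ those of $(x,y^{\ast})$, $(y,x^{\ast})$. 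Then the ratio above equals $\Psi(r,\rho,\delta_{0})\cdot\min\{A_{1},A_{2}\}\slash\min\{\Psi(r,\rho,\delta_{1})A_{1},\Psi(r,\rho,\delta_{2})A_{2}\}$, and using only the trivial bounds $\min\{c_{1}A_{1},c_{2}A_{2}\}\le c_{j}A_{j}$ (with $j$ the index realising $\min\{A_{1},A_{2}\}$, which is also the one realising $\min\{\delta_{1},\delta_{2}\}$) and $\min\{c_{1}A_{1},c_{2}A_{2}\}\ge\min\{c_{1},c_{2}\}\min\{A_{1},A_{2}\}$, one obtains
\[
\frac{\Psi(r,\rho,\delta_{0})}{\max\{\Psi(r,\rho,\delta_{1}),\Psi(r,\rho,\delta_{2})\}}\ \le\ \frac{w_{S_\beta}(f(x),f(y))}{w_{S_\alpha}(x,y)}\ \le\ \frac{\Psi(r,\rho,\delta_{0})}{\Psi(r,\rho,\delta_{j})}.
\]
Two scalar facts about $\Psi$ then finish the proof: \emph{(a)} for fixed $r,\rho$ the map $\delta\mapsto\Psi(r,\rho,\delta)$ is non-increasing on $[0,2\alpha]$; and \emph{(b)} $\Psi(r,\rho,0)\le\frac{p\sin(\alpha\slash2)}{\sin(\beta\slash2)}\,\Psi(r,\rho,\alpha)$ for all $r,\rho>0$. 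Indeed, the elementary angular inequalities $\delta_{0}\le\min\{\delta_{1},\delta_{2}\}\le\alpha$ (and $\delta_{1},\delta_{2}\le 3\alpha\slash2$) put $\delta_{0},\delta_{1},\delta_{2}$ into $[0,2\alpha]$ with $\delta_{0},\delta_{j}\in[0,\alpha]$, so by (a) the left-hand member above is $\ge1$ while the right-hand member is $\le\Psi(r,\rho,0)\slash\Psi(r,\rho,\alpha)\le\frac{\beta\sin(\alpha\slash2)}{\alpha\sin(\beta\slash2)}$ by (b).

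Facts (a) and (b) are where the only genuine computation sits, and both reduce to monotonicity of $\sin$. For $r=\rho$ one has $\Psi(r,r,\delta)=r^{p-1}\sin(p\delta\slash2)\slash\sin(\delta\slash2)$, and differentiating shows it is decreasing on $[0,2\pi\slash p]\supseteq[0,2\alpha]$ (the derivative has the sign of $\tfrac{p-1}{2}\sin(\tfrac{(p+1)\delta}{2})-\tfrac{p+1}{2}\sin(\tfrac{(p-1)\delta}{2})\le0$, which follows from $t\mapsto\sin(t)\slash t$ being decreasing on $(0,\pi]$); the case $r\ne\rho$ follows by an extra monotonicity in the modulus ratio. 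Fact (b), squared, takes the form $(\text{RHS})^{2}-(\text{LHS})^{2}=(r-\rho)^{2}\cdot(\text{a non-negative quantity})$, so it holds with equality exactly when $r=\rho$. That equal-modulus case also yields the sharpness of the upper constant: for $x=t_{1}e^{i\alpha\slash2}$, $y=t_{2}e^{i\alpha\slash2}$ on the bisector one computes $w_{S_\alpha}(x,y)=|t_{1}-t_{2}|\slash(t_{1}^{2}+t_{2}^{2}-2t_{1}t_{2}\cos\alpha)^{1\slash2}$ and the analogous identity in $S_\beta$, and letting $t_{2}\to t_{1}$ the quotient tends to $\frac{p\sin(\alpha\slash2)}{\sin(\beta\slash2)}=\frac{\beta\sin(\alpha\slash2)}{\alpha\sin(\beta\slash2)}$; the lower constant $1$ is attained in the limit as one of $x,y$ tends to a bounding ray (or to the vertex), where both $w$-distances tend to $1$.

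The step I expect to be the main obstacle is the bookkeeping when $\arg x$ and $\arg y$ lie on opposite sides of the bisector: there the index realising $\min\{|x-y^{\ast}|,|y-x^{\ast}|\}$ may differ from the one realising $\min\{|f(x)-f(y^{\ast})|,|f(y)-f(x^{\ast})|\}$ (this is why the two displayed bounds are not symmetric in $\delta_{1},\delta_{2}$), and when $\alpha$ is close to $\pi$ one of $\delta_{1},\delta_{2}$ can exceed $\pi$, so one has to make sure the monotonicity in (a) is invoked only on the range $[0,2\alpha]$, which is legitimate precisely because $\beta\le\pi$ forces $2\alpha\le2\pi\slash p$. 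Reducing the whole comparison to the function $\Psi$ is what makes this tractable, since it turns the geometric problem into the two scalar inequalities (a) and (b).
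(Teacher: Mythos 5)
Your route is genuinely different from the paper's: the paper disposes of this lemma in one line by citing \cite[Lemma 4.3, p.~8]{fss} (which relates $w_{S_\theta}$ to the triangular ratio metric in convex sectors) together with \cite[Lemma 5.11, p.~13]{sqm} (the same distortion inequality for $s$ under the power map), whereas you work directly from the definition of $w$ via the reflected points $z^{\ast}$. That self-contained plan is viable and your reduction to the single function $\Psi$ is the right skeleton, but there is a genuine gap exactly at the step you flag as ``the main obstacle'', and as written you do not close it. The two trivial bounds you invoke yield
\begin{align*}
\frac{\Psi(r,\rho,\delta_0)}{\Psi(r,\rho,\delta_j)}\ \leq\ \frac{w_{S_\beta}(f(x),f(y))}{w_{S_\alpha}(x,y)}\ \leq\ \frac{\Psi(r,\rho,\delta_0)}{\min\{\Psi(r,\rho,\delta_1),\Psi(r,\rho,\delta_2)\}},
\end{align*}
i.e.\ $\min\{c_1A_1,c_2A_2\}\leq c_jA_j$ gives a \emph{lower} bound for the quotient, not the upper bound $\Psi_0\slash\Psi_j$ you display (if $A_1<A_2$ but $\Psi_2A_2<\Psi_1A_1$, the quotient actually exceeds $\Psi_0\slash\Psi_j$). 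The correct upper bound is too weak for the sharp constant: when $\arg x$ and $\arg y$ lie on opposite sides of the bisector one has $\delta_1+\delta_2=2\alpha$, so $\max\{\delta_1,\delta_2\}$ can reach $3\alpha\slash2$, and by (a) the resulting bound $\Psi(r,\rho,0)\slash\Psi(r,\rho,\max\{\delta_1,\delta_2\})$ is in general larger than $\frac{p\sin(\alpha\slash2)}{\sin(\beta\slash2)}$ (already for $r=\rho$ and $\max\delta=3\alpha\slash2$ it equals $p\sin(3\alpha\slash4)\slash\sin(3\beta\slash4)$, and $\sin(\alpha t)\slash\sin(\beta t)$ is increasing in $t$).

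The gap is fixable, and the fix makes the proof cleaner: the minimising index is \emph{preserved} by $f$. Indeed $A_1\leq A_2$ iff $\cos\delta_1\geq\cos\delta_2$, and since either $\delta_1=\delta_2$ or $\delta_1+\delta_2=2\alpha$ with $|\delta_1-\delta_2|\leq\alpha$, the identities $\cos\delta_1-\cos\delta_2=-2\sin\alpha\,\sin\bigl(\tfrac{\delta_1-\delta_2}{2}\bigr)$ and $\cos(p\delta_1)-\cos(p\delta_2)=-2\sin\beta\,\sin\bigl(\tfrac{p(\delta_1-\delta_2)}{2}\bigr)$ show (using $\alpha,\beta\leq\pi$) that both differences have the sign of $\delta_2-\delta_1$. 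Hence $\min\{\Psi_1A_1,\Psi_2A_2\}=\Psi_jA_j$ and the quotient equals \emph{exactly} $\Psi(r,\rho,\delta_0)\slash\Psi(r,\rho,\delta_j)$ with $0\leq\delta_0\leq\delta_j\leq\alpha$; then (a) and (b) give both the lower bound $1$ and the upper bound $\frac{\beta\sin(\alpha\slash2)}{\alpha\sin(\beta\slash2)}$, and your sharpness examples are correct. Do note that (a) for $r\neq\rho$ and (b) still require honest computations: in particular, in (b) the reduction leads to comparing two nonpositive quantities (e.g.\ for $p=2$ one meets $p^2r^p\rho^p(r-\rho)^2-r\rho(r^p-\rho^p)^2=-r\rho(r-\rho)^4\leq0$), so the claim that the difference of squares is manifestly ``$(r-\rho)^2$ times a non-negative quantity'' needs to be substantiated rather than asserted.
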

\begin{proof}
Follows from Lemma \cite[Lemma 4.3, p. 8]{fss} and \cite[Lemma 5.11, p. 13]{sqm}.
\end{proof}

The final lemma in this paper is about quasiconformal mappings, so let us yet briefly define them. 

\begin{nonsec}{\bf $K$-quasiconformal mappings.} \cite[Rmk 15.30 \& (15.6), p. 289]{hkvbook}, \cite[p. VI]{v1} 
If $G,G'$ are domains in $\R^n$, a homeomorphism $f:G\to G'=f(G)$ is \emph{$K$-quasiconformal} if it it fulfills the condition 
\begin{align*}
\M(\Gamma)\slash K\leq\M(f(\Gamma))\leq K\M(\Gamma)    
\end{align*}
for all curve families $\Gamma$ in $G$. If the homeomorphism $f$ is sense-preserving, it is $K$-quasiconformal if and only if it is $K$-quasiregular and injective. Consequently, the sense-preserving $K$-quasiconformal mappings form a subclass of the $K$-quasiregular mappings, but sense-reversing quasiconformal mappings are not quasiregular and non-injective quasi\-regular mappings are not quasiconformal.
\end{nonsec}

Our next result follows from \cite[Cor. 5.10(1), p. 13]{sqm}, which in turn follows from the result in Theorem \ref{thm_schforqr}(3). Note that Theorem \ref{thm_schforqr} holds trivially for all sense-preserving $K$-quasiconformal mappings, because they belong to $K$-quasiregular mappings. Furthermore, Theorem \ref{thm_schforqr} holds more generally for all $K$-quasiconformal mappings, because if a $K$-quasiconformal mappings $f$ is sense-reversing, then the function composition $f\circ\sigma$ of $f$ and any reflection $\sigma$ is a sense-preserving $K$-quasiconformal mapping and hyperbolic distances are invariant under the reflection $\sigma$.

\begin{theorem}
If $\alpha,\beta\in(0,\pi]$ and $f:S_\alpha\to S_\beta=f(S_\alpha)$ is a $K$-quasiconformal homeomorphism, the inequality 
\begin{align*}
\frac{\beta}{c(K)^K\pi\sin(\beta\slash2)}w_{S_\alpha}(x,y)^K\leq w_{S_\beta}(f(x),f(y))\leq c(K)\left(\frac{\pi}{\alpha}\sin\left(\frac{\alpha}{2}\right)\right)^{1\slash K}w_{S_\alpha}(x,y)^{1\slash K}    
\end{align*}
holds for all $x,y\in S_\alpha$ where $c(K)$ is as in Theorem \ref{thm_schforqr}.
\end{theorem}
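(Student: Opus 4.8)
The plan is to combine the conformal change-of-angle estimate for $w$ in a sector (the preceding lemma with exponent $\beta/\alpha$) with the quasiconformal Schwarz-type distortion in Theorem~\ref{thm_schforqr}(3), routed through a transfer to the upper half-space and the unit disk where the hyperbolic metric is conformally invariant. Concretely, I would first reduce to the half-plane: let $g_\alpha:S_\alpha\to\uhp^2$, $g_\alpha(z)=z^{\pi/\alpha}$ and $g_\beta:S_\beta\to\uhp^2$, $g_\beta(z)=z^{\pi/\beta}$ be the standard conformal maps. Then $h:=g_\beta\circ f\circ g_\alpha^{-1}:\uhp^2\to\uhp^2$ is a $K$-quasiconformal self-map of $\uhp^2$, hence $K$-quasiregular, so Theorem~\ref{thm_schforqr}(3) applies to $h$. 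By conformal invariance of the hyperbolic metric, $\rho_{\uhp^2}(h(u),h(v))=\rho_{S_\beta}(f(x),f(y))$ and $\rho_{\uhp^2}(u,v)=\rho_{S_\alpha}(x,y)$ where $u=g_\alpha(x)$, $v=g_\alpha(y)$, so we get
\begin{align*}
\rho_{S_\beta}(f(x),f(y))\leq c(K)\max\{\rho_{S_\alpha}(x,y),\rho_{S_\alpha}(x,y)^{1/K}\}.
\end{align*}

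Next I would convert this hyperbolic inequality into a $w$-inequality at both ends. The key tool is Lemma~\ref{lem_jwsprho}(1), which gives on $\uhp^n$ the exact identity $w_{\uhp^n}=\mathrm{th}(\rho_{\uhp^n}/2)$, together with the sector version: from the cited results in \cite[Lemma 4.3, p.~8]{fss} and \cite[Lemma 5.11, p.~13]{sqm} (exactly as in the previous lemma) one has two-sided comparisons of the form
\begin{align*}
c_1(\gamma)\,\mathrm{th}\frac{\rho_{S_\gamma}(x,y)}{2}\leq w_{S_\gamma}(x,y)\leq c_2(\gamma)\,\mathrm{th}\frac{\rho_{S_\gamma}(x,y)}{2},
\end{align*}
with the relevant constants being powers of $(\pi/\gamma)\sin(\gamma/2)$ for $\gamma\in\{\alpha,\beta\}$. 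Using $\mathrm{th}(t/2)\le t$ and, for the lower bound, a companion inequality bounding $\mathrm{th}(t/2)$ from below in the regime that matters, one translates the hyperbolic distortion estimate into $w_{S_\beta}(f(x),f(y))\le c(K)\big((\pi/\alpha)\sin(\alpha/2)\big)^{1/K}w_{S_\alpha}(x,y)^{1/K}$ for the upper bound, and symmetrically $w_{S_\beta}(f(x),f(y))\ge \frac{\beta}{c(K)^K\pi\sin(\beta/2)}w_{S_\alpha}(x,y)^K$ for the lower bound; the exponent $K$ on the lower side comes from inverting $h$, whose distortion exponent is again governed by $c(K)$ with the roles of the powers swapped, producing the $c(K)^K$ and the $K$-th power of $w_{S_\alpha}$.

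The main obstacle I anticipate is bookkeeping the constants carefully through the max in Theorem~\ref{thm_schforqr}(3) and through the passage from $\mathrm{th}(\rho/2)$ to $\rho$ and back — in particular making sure the exponents ($1/K$ on the right, $K$ on the left) attach to $w_{S_\alpha}(x,y)$ rather than to $\rho_{S_\alpha}(x,y)$, and that the geometric factors $(\pi/\alpha)\sin(\alpha/2)$ and $\beta/(\pi\sin(\beta/2))$ land on the correct side with the correct power. One convenient way to keep this clean is to cite \cite[Cor. 5.10(1), p.~13]{sqm} directly, which already packages the half-plane-to-sector $w$-distortion under quasiconformal maps in essentially this form; then the only remaining work is to observe that the present statement is exactly that corollary rewritten for a general source sector $S_\alpha$ via the conformal change of angle, using the previous lemma and the conformal invariance of $\rho$. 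A secondary, minor point is to justify the reduction $S_\alpha\to\uhp^2\to S_\beta$ when $\alpha$ or $\beta$ equals $\pi$ (so one of the conformal maps is the identity), which causes no difficulty.
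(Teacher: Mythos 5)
Your proposal is correct and lands on essentially the same argument as the paper: the paper's proof is precisely the one-line citation you arrive at, namely combining \cite[Lemma 4.3, p.~8]{fss} (the conformal change-of-angle comparison for $w$ in sectors) with \cite[Cor.~5.10(1), p.~13]{sqm}, which itself packages the half-plane quasiconformal distortion of $w$ via Theorem \ref{thm_schforqr}(3). Your more detailed sketch of how those cited results are derived is a reasonable unpacking of the same route, so there is nothing to add.
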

\begin{proof}
Follows from Lemma \cite[Lemma 4.3, p. 8]{fss} and \cite[Cor. 5.10(1), p. 13]{sqm}.
\end{proof}

\def\cprime{$'$} \def\cprime{$'$} \def\cprime{$'$}
\providecommand{\bysame}{\leavevmode\hbox to3em{\hrulefill}\thinspace}
\providecommand{\MR}{\relax\ifhmode\unskip\space\fi MR }
\providecommand{\MRhref}[2]{%
  \href{http://www.ams.org/mathscinet-getitem?mr=#1}{#2}
}
\providecommand{\href}[2]{#2}

\end{document}